% SIAM Article Template
\documentclass[onefignum,onetabnum]{siamart190516}

% Information that is shared between the article and the supplement
% (title and author information, macros, packages, etc.) goes into
% ex_shared.tex. If there is no supplement, this file can be included
% directly.
\usepackage{todonotes}
\usepackage{bm}
\usepackage{enumerate}

% SIAM Shared Information Template
% This is information that is shared between the main document and any
% supplement. If no supplement is required, then this information can
% be included directly in the main document.

% Packages and macros go here
\usepackage{amsfonts}
\usepackage{graphicx}
\usepackage{epstopdf}
\usepackage{algorithmic}
\usepackage{nicefrac}
\usepackage{textgreek}
\usepackage{stmaryrd}
\ifpdf
  \DeclareGraphicsExtensions{.eps,.pdf,.png,.jpg}
\else
  \DeclareGraphicsExtensions{.eps}
\fi
\usepackage{booktabs}

\usepackage{multirow,bigstrut}
\usepackage{amsmath,amsfonts,amssymb}
\usepackage{mathrsfs}
\usepackage{color}
\usepackage{upgreek}
\usepackage{bm}
\usepackage{verbatim}
\usepackage{mathtools}
\usepackage{calligra}
\usepackage[T1]{fontenc}
\usepackage{subfigure}
\usepackage{pgfplots}
\usepackage{xcolor}

\usepackage{tikz}
  \usetikzlibrary{calc,shapes,decorations,decorations.text, mindmap,shadings,patterns,matrix,arrows,intersections,automata,backgrounds}
  
\usetikzlibrary{shapes,arrows}

\usepackage{verbatim}
\usepackage{exscale}
\usepackage{relsize}
\usepackage{enumitem}
\theoremstyle{plain}

% Add a serial/Oxford comma by default.

% Used for creating new theorem and remark environments
\newsiamremark{remark}{Remark}
%\newsiamremark{assumption}{Assumption}
\newsiamremark{hypothesis}{Hypothesis}
\crefname{hypothesis}{Hypothesis}{Hypotheses}
\newsiamthm{claim}{Claim}
\newtheorem{assumption}[theorem]{Assumption}

% Sets running headers as well as PDF title and authors
%\headers{Two-level Schwarz methods based on multiscale spectral GFEM}

% Title. If the supplement option is on, then "Supplementary Material"
% is automatically inserted before the title.
\title{Multiscale model reduction and two-level Schwarz preconditioner for H(curl) elliptic problems
	%\thanks{Submitted to the editors DATE.
%\funding{This work was funded by the Fog Research Institute under contract no.~FRI-454.}}
}

% Authors: full names plus addresses.
\author{Chupeng Ma\thanks 
    {School of Sciences, Great Bay University, Dongguan 523000, China; Great Bay Institute for Advanced Study, Dongguan 523000, China (\email{chupeng.ma@gbu.edu.cn}).}
\and Yongwei Zhang\thanks 
    {School of Mathematics and Statistics, Zhengzhou University, Zhengzhou 450001, China (\email{ywzhang@zzu.edu.cn}).}    
}

% \author{Arne Strehlow, Chupeng Ma, Robert Scheichl
% 	%Peter Bastian\thanks{Interdisciplinary Center for Scientific Computing (IWR), Universität Heidelberg, 
%   %(\email{peter.bastian@iwr.uni-heidelberg.de}, \url{https://conan.iwr.uni-heidelberg.de/people/peter/}).}
% %\and Robert S. Scheichl\thanks{Department of Applied Mathematics and IWR, Universität Heidelberg,
%   %(\email{r.s.scheichl@}, \email{jesmith@fictional.edu}).}
% %\and Arne Strehlow\footnotemark[3]
% }

\usepackage{amsopn}

%%% Local Variables: 
%%% mode:latex
%%% TeX-master: "ex_article"
%%% End: 

\usepackage{graphicx}
\graphicspath{ {./images/} }

% Optional PDF information
\ifpdf
\hypersetup{
  pdftitle={A-harmonic GenEO-Type preconditioner},
  pdfauthor={A. Strehlow}
}
\fi

% The next statement enables references to information in the
% supplement. See the xr-hyperref package for details.

%\externaldocument{ex_supplement}

% FundRef data to be entered by SIAM
%<funding-group specific-use="FundRef">
%<award-group>
%<funding-source>
%<named-content content-type="funder-name"> 
%</named-content> 
%<named-content content-type="funder-identifier"> 
%</named-content>
%</funding-source>
%<award-id> </award-id>
%</award-group>
%</funding-group>

\theoremstyle{remark}
\theoremstyle{definition}
\theoremstyle{condition}

\newcommand{\rate}{\Lambda}

\usepackage[normalem]{ulem}

\begin{document}

\maketitle
%
% REQUIRED
\begin{abstract}
% This paper is concerned with the efficient solution of linear systems arising from curl-conforming finite element discretizations of H(curl) elliptic problems with heterogeneous coefficients. We first apply the discrete version of a multiscale spectral generalized finite element method (MS-GFEM) for model reduction, and prove that the method converges exponentially with respect to the number of local degrees of freedom. The method and its convergence analysis are valid for general problem and computational settings: general heterogeneous ($L^{\infty}$) coefficients, domains/subdomains with nontrivial topologies, irregular subdomains, and high-order finite element discretizations. The method is further formulated as an iterative solver, leading to a two-level restricted additive Schwarz type preconditioner with the MS-GFEM coarse space. The GMRES algorithm applied with the preconditioner is shown to converge at least at a rate of $\Lambda$, where $\Lambda$ denotes the error bound of the discrete MS-GFEM. Numerical experiments including 2D and 3D examples demonstrate superior capabilities of the proposed methods for dimensionality reduction. 
This paper addresses the efficient solution of linear systems arising from curl-conforming finite element discretizations of $H(\mathrm{curl})$ elliptic problems with heterogeneous coefficients. We first employ the discrete form of a multiscale spectral generalized finite element method (MS-GFEM) for model reduction and prove that the method exhibits exponential convergence with respect to the number of local degrees of freedom. The proposed method and its convergence analysis are applicable in broad settings, including general heterogeneous ($L^{\infty}$) coefficients, domains and subdomains with nontrivial topology, irregular subdomain geometries, and high-order finite element discretizations. Furthermore, we formulate the method as an iterative solver, yielding a two-level restricted additive Schwarz type preconditioner based on the MS-GFEM coarse space. The GMRES algorithm, applied to the preconditioned system, is shown to converge at a rate of at least $\Lambda$, where $\Lambda$ denotes the error bound of the discrete MS-GFEM approximation. Numerical experiments in both two and three dimensions demonstrate the superior performance of the proposed methods in terms of dimensionality reduction.

\end{abstract}
\begin{keywords}
Maxwell's equations, domain decomposition methods, multiscale methods, Schwarz methods, spectral coarse spaces
\end{keywords}

\begin{AMS}
65F10, 65N22, 65N30, 65N55
\end{AMS}

\section{Introduction}

In this paper, we focus on the efficient solution of the following (positive-definite) curl-curl problem:
\begin{equation}\label{eq:model_problem}
\nabla \times (\nu \nabla \times \bm{u}) + \kappa \bm{u} = \bm{f}, \quad \nu = \nu(\bm{x}) > 0, \quad \kappa = \kappa(\bm{x}) > 0,    
\end{equation}
subject to appropriate boundary conditions. Equation~\cref{eq:model_problem} arises in a variety of electromagnetic applications, including implicit time discretizations of Maxwell’s equations, as well as preconditioning for time-harmonic Maxwell's equations and incompressible magnetohydrodynamics (MHD) models~\cite{chen2007adaptive}. Such problems are commonly referred to as $H(\mathrm{curl})$ elliptic problems. In practical electromagnetic simulations, the coefficients $\nu$ and $\kappa$ in~\cref{eq:model_problem} are often strongly heterogeneous and exhibit high contrast. A representative example is signal integrity analysis in integrated circuit (IC) packages~\cite{shao2010full}, which involve intricate fine-scale structures and materials with widely varying properties. Although standard numerical methods—such as curl-conforming finite elements—are well established for Maxwell-type problems, solving the resulting linear systems efficiently remains a major challenge in the presence of complex and heterogeneous structures. To ensure accurate results, the computational mesh must be sufficiently fine to resolve all fine-scale features, leading to a prohibitively large number of unknowns. Simultaneously, the geometric and material complexities render the linear systems severely ill-conditioned. Therefore, to enable fast and accurate electromagnetic simulations, it is imperative to develop robust and efficient model reduction and iterative solution techniques.

There exists a vast body of literature on multiscale model reduction methods for elliptic multiscale problems~\cite{hou1997multiscale,weinan2003heterognous,efendiev2013generalized,araya2013multiscale,maalqvist2014localization,owhadi2015bayesian,chung2018constraint,hauck2023super}. This list is by no means exhaustive, and we refer readers to recent surveys~\cite{owhadi2019operator,maalqvist2020numerical,altmann2021numerical,chung2023multiscale} for a more comprehensive overview. These methods typically construct a coarse model that captures physically relevant fine-scale features and serves as an efficient approximation of the original model. However, most existing multiscale methods are primarily developed for scalar elliptic equations, with relatively few extensions to Maxwell-type problems. For Maxwell's equations with (locally) periodic coefficients, homogenization-based multiscale methods have been proposed~\cite{bensoussan2011asymptotic,kristensson2003homogenization,cao2010multiscale,henning2016new,verfurth2019heterogeneous,hochbruck2019heterogeneous}. In contrast, due to the inherent complexity of the equations, significantly fewer results exist for Maxwell-type problems with general heterogeneous ($L^{\infty}$) coefficients. In particular, only a limited number of works have addressed multiscale methods for $H(\mathrm{curl})$ elliptic problems in such general settings. For example, in~\cite{chung2019adaptive}, adaptive generalized multiscale finite element methods (GMsFEM)~\cite{efendiev2013generalized} were developed for discretized $H(\mathrm{curl})$ elliptic problems with divergence-free source terms. In~\cite{gallistl2018numerical,henning2020computational}, multiscale methods based on the localized orthogonal decomposition (LOD) technique~\cite{maalqvist2014localization} were proposed for continuous $H(\mathrm{curl})$ problems posed in contractible domains. These approaches critically rely on the availability of a suitable projection operator, which is explicitly known only in the case of natural boundary conditions. Another LOD-based method was presented in~\cite{ren2019homogenization}, though without a rigorous convergence analysis. To the best of our knowledge, multiscale methods with provable convergence for $H(\mathrm{curl})$ elliptic problems under general conditions -- including arbitrary coefficients, source terms, domains, and boundary conditions -- have not yet been reported.

Iterative solvers for \(H(\mathrm{curl})\) elliptic problems have been extensively studied, with a particular emphasis on constructing effective preconditioners. A widely recognized approach in this context is the Hiptmair--Xu preconditioner~\cite{hiptmair2007nodal,kolev2009parallel,hu2021convergence,hu2023convergence}, also known as the auxiliary space method. While this method has achieved considerable success, recent findings~\cite{bootland2023robust} indicate that its robustness can deteriorate in topologically complex domains. Domain decomposition methods (DDMs) form an important class of techniques for designing scalable and robust preconditioners, with the construction of an appropriate coarse space being a critical component. Two-level overlapping Schwarz methods for \(H(\mathrm{curl})\) problems with standard coarse spaces were previously investigated in~\cite{toselli2000overlapping,hiptmair2000overlapping,pasciak2002overlapping}, under the assumption that the computational domain is convex and/or simply connected. Optimal condition number estimates for overlapping Schwarz methods in \(H(\mathrm{curl})\) were recently established in~\cite{liang2024sharp}, also under these geometric assumptions. The techniques developed in~\cite{liang2024sharp} were subsequently extended to domains with nontrivial topology in~\cite{oh2024new}, though only for problems with a non-dominated curl-curl term and using the lowest-order N\'{e}d\'{e}lec elements. It is worth noting that all of the aforementioned DDM results assume constant coefficients. Iterative substructuring methods have also been developed for \(H(\mathrm{curl})\) problems (see, e.g.,~\cite{hu2003nonoverlapping,toselli2006dual,dohrmann2012iterative,calvo2016bddc}), but typically under certain geometric restrictions on the subdomains (e.g., convex polyhedra) and on the regularity of the coefficients. In recent years, spectral coarse spaces~\cite{galvis2010domain,dolean2012analysis,spillane2014abstract,heinlein2019adaptive}, constructed via local generalized eigenvalue problems, have gained significant attention as a means of developing coefficient-robust DDMs. By carefully designing local eigenvalue problems, one can show that the condition number of the resulting preconditioners depends essentially only on a prescribed eigenvalue threshold. However, existing spectral coarse spaces are essentially designed and analyzed for $H^{1}$ elliptic problems, and direct extensions to H(curl) problems are generally not viable. To the best of our knowledge, the only attempt to construct spectral coarse spaces for \(H(\mathrm{curl})\) problems appears in~\cite{bootland2023robust}, where the GenEO spectral coarse space~\cite{spillane2014abstract} is augmented with a near-kernel space composed of gradients of \(H^1\) functions. The resulting preconditioner is shown---both theoretically and numerically---to be robust with respect to coefficient variations and domain topology. A potential limitation of this approach, however, is the large size of the coarse space, stemming from the large near-kernel of the global problem.

The purpose of this paper is to design and analyze efficient multiscale model reduction methods and two-level Schwarz preconditioners for discretized \( H(\mathrm{curl}) \) elliptic problems without restrictive assumptions. The proposed methods are based on the multiscale spectral generalized finite element method (MS-GFEM)—a particular partition of unity finite element method (PUFEM)~\cite{melenk1995generalized} that employs optimal local approximation spaces constructed from local eigenvalue problems. This approach was first introduced by Babu\v{s}ka and Lipton~\cite{babuska2011optimal} and has been systematically developed in recent years; see~\cite{ma2021novel,ma2022error,chupeng2023wavenumber,ma2024exponential,ma2025unified}. In particular, we refer to~\cite{ma2025unified} for a unified theoretical framework with sharpened local convergence rates. For discretized \( H(\mathrm{curl}) \) elliptic problems based on standard curl-conforming finite elements, we develop a discrete MS-GFEM model reduction method following the general theory established in~\cite{ma2025unified}. The proposed method naturally accommodates general boundary conditions and source terms in \( H(\mathrm{curl})' \) without modification. We prove an exponential decay rate for the local approximation errors and also the global error under very general conditions. Specifically, our results apply to general heterogeneous coefficients, domains and subdomains with nontrivial topology, irregular subdomain geometries, and high-order finite element discretizations. Although the methodology aligns with existing applications of MS-GFEM to other elliptic problems, the local convergence analysis in the discretized \( H(\mathrm{curl}) \) setting is substantially more challenging than for \( H^1 \) or continuous \( H(\mathrm{curl}) \) problems, particularly in the general settings considered here. The primary challenges lie in verifying two key ingredients for the local exponential convergence: a discrete Caccioppoli inequality and a weak approximation property. They rely on appropriate Helmholtz decompositions and a detailed analysis of the structure of the underlying N\'{e}d\'{e}lec finite element spaces. A further novel finding of our work is that, in the \( H(\mathrm{curl}) \) context, the MS-GFEM eigenvalues encode meaningful topological information about the associated subdomain. We illustrate this phenomenon with a detailed three-dimensional example; see \cref{3D example}.

Although the discrete MS-GFEM guarantees exponential convergence, the resulting reduced (coarse) model can become prohibitively large for efficient solution of large-scale 3D electromagnetic problems. To mitigate this issue, we reformulate the method as a Richardson iterative solver following \cite{strehlow2024fast}, leading to a two-level restricted additive Schwarz (RAS) preconditioner with the MS-GFEM coarse space. When applied within either the Richardson iteration or the GMRES algorithm, the preconditioner is proved to yield convergence at a rate bounded below by \(\Lambda\), where \(\Lambda\) denotes the global error bound of the underlying MS-GFEM. Notably, the convergence analysis is derived directly from the MS-GFEM framework, rather than relying on the abstract Schwarz theory~\cite{toselli2004domain} typically used in DDMs. The proposed iterative solvers can achieve the desired accuracy with a small number of iterations and a fairly small coarse model, often resulting in superior computational efficiency compared to the direct application of the discrete MS-GFEM. We also note that, unlike the approach in~\cite{bootland2023robust}, the coarse space in our method is constructed purely from local generalized eigenvalue problems, without additional enrichment using near-kernel components.

The rest of this paper is structured as follows. In \cref{sec-2}, we give the problem formulation and introduce some notation and preliminaries that will be used in the subsequent sections. In \cref{sec-3}, we first present the discrete MS-GFEM for H(curl) problems and then derive the two-level Schwarz preconditioner. \Cref{sec-4} is devoted to the proof of the local exponential convergence of the method. Numerical experiments are presented in \cref{sec-5} to assess the performance of the proposed methods.

\section{Problem formulation, notation, and preliminaries}\label{sec-2}
\subsection{Model problem and finite element discretization}
Let $\Omega\subset\mathbb{R}^{d}$, $d=2,3$, be a bounded Lipschitz domain with a polygonal boundary $\Gamma:=\partial \Omega$ ($\Omega$ and $\Gamma$ may have nontrivial topologies). We consider the following curl-curl problem: Find ${\bm u}:\Omega\rightarrow \mathbb{R}^{d}$ such that
\begin{equation}\label{eq:continuous_model_prob}
\left\{
\begin{array}{lll}
 \nabla \times (\nu \nabla\times {\bm u}) + \kappa {\bm u}= {\bm f}\,\quad &{\rm in}\;\, \Omega \\[2mm]
  \qquad \qquad\quad\quad  {\bm n}\times {\bm u} = {\bm 0} \,\quad &{\rm on}\;\, \Gamma, 
\end{array}
\right.
\end{equation}
where ${\bm n}$ is the outward unit normal to $\Gamma$. To simplify the presentation, we only consider the homogeneous Dirichlet boundary condition in \cref{eq:continuous_model_prob}, but the methods presented in this paper are directly applicable to general boundary conditions. Before defining the weak formulation of problem \cref{eq:continuous_model_prob}, we introduce some notation that will be used throughout the paper. For a bounded Lipschitz domain $D\subset\mathbb{R}^{d}$, we define 
\begin{equation}
\begin{split}
{\bm H}({\rm curl};D) :&=\big\{{\bm u}\in {\bm L}^{2}(D): \nabla\times {\bm u}\in {\bm L}^{2}(D) \big\},\\
{\bm H}({\rm div};D) :&=\big\{{\bm u}\in {\bm L}^{2}(D): \nabla\cdot {\bm u}\in L^{2}(D) \big\},\\
{\bm H}_{0}({\rm curl};D) :&=\big\{{\bm u}\in {\bm H}({\rm curl};D): {\bm n}\times {\bm u} = {\bm 0}\;\;\text{on}\;\;\partial D \big\},\\
{\bm H}_{0}({\rm div};D) :&=\big\{{\bm u}\in {\bm H}({\rm div};D): {\bm n}\cdot {\bm u} = 0\;\;\text{on}\;\;\partial D \big\},
\end{split}
\end{equation}
which are equipped with the norms $\Vert {\bm u}\Vert_{{\bm H}({\rm curl};D)} : = \big(\Vert {\bm u}\Vert^{2}_{{\bm L}^{2}(D)} + \Vert \nabla\times {\bm u}\Vert^{2}_{{\bm L}^{2}(D)} \big)^{\frac12}$ and 
$\Vert {\bm u}\Vert_{{\bm H}({\rm div};D)} : = \big(\Vert {\bm u}\Vert^{2}_{{\bm L}^{2}(D)} + \Vert \nabla\cdot {\bm u}\Vert^{2}_{L^{2}(D)} \big)^{\frac12}$, respectively. Moreover, let ${\bm H}_{0}({\rm curl};D)^{\prime}$ denote the dual space of ${\bm H}_{0}({\rm curl};D)$. Throughout this paper, we assume that ${\bm f}\in {\bm H}_{0}({\rm curl};\Omega)^{\prime}$, and that the coefficients $\nu$ and $\kappa$ satisfy the following conditions.
\begin{assumption}\label{ass:coefficients}
\begin{enumerate}
\item[(i)] $\nu\in L^{\infty}(\Omega, \mathbb{R}_{\rm sym}^{d\times d})$, and there exist $0<\nu_{\rm min}\leq \nu_{\rm max}<+\infty$ such that $\nu_{\rm min}|{\bm \xi}|^{2}\leq \nu({\bm x}){\bm \xi}\cdot{\bm \xi}^{T} \leq \nu_{\rm max}|{\bm \xi}|^{2}$ for $a.e \; {\bm x}\in \Omega$ and all ${\bm \xi}\in \mathbb{R}^{d}$.

\vspace{1ex}
\item[(ii)] $\kappa\in L^{\infty}(\Omega, \mathbb{R}_{\rm sym}^{d\times d})$, and there exist $0<\kappa_{\rm min}\leq \kappa_{\rm max}<+\infty$ such that $\kappa_{\rm min}|{\bm \xi}|^{2}\leq \kappa({\bm x}){\bm \xi}\cdot{\bm \xi}^{T} \leq \kappa_{\rm max}|{\bm \xi}|^{2}$ for $a.e \; {\bm x}\in \Omega$ and all ${\bm \xi}\in \mathbb{R}^{d}$.

% There exists a constant $C_{\rm ell}>0$, such that for any subdomain $D\subset\Omega$, 
% \begin{equation*}
% |a_{D}({\bm v}, {\bm v})|\geq C_{\rm ell} \Vert {\bm v}\Vert^{2}_{{\bm H}({\rm curl};D)} \quad \text{for all}\;\;{\bm v}\in {\bm H}({\rm curl};D).  
% \end{equation*}

% \item[(3)] ${\bm F}\in {\bm H}_{0}({\rm curl};D)^{\prime}$.
\end{enumerate}
\end{assumption}
For a domain $D\subset\Omega$, we define a bilinear form $a_{D}:{\bm H}({\rm curl};D)\times {\bm H}({\rm curl};D)\rightarrow \mathbb{R}$ as
\begin{equation}\label{local_sesqui_form}
    a_{D}({\bm v}, {\bm w}):=(\nu\nabla\times{\bm v}, \nabla\times{\bm w})_{{\bm L}^{2}(D)} + (\kappa{\bm v}, {\bm w})_{{\bm L}^{2}(D)}.
\end{equation} 
Moreover, we define the energy norm $\Vert {\bm v}\Vert_{a,D}:=\sqrt{a_{D}({\bm v},{\bm v})}$ for ${\bm v}\in {\bm H}({\rm curl};D)$. When $D=\Omega$, we drop the domain index and simply write $a(\cdot,\cdot)$ and $\Vert {\bm v}\Vert_{a}$. The weak formulation of problem \cref{eq:continuous_model_prob} is defined by: Find ${\bm u}\in {\bm H}_{0}({\rm curl};\Omega)$ such that 
\begin{equation}\label{eq:weak_form}
a({\bm u}, {\bm v}) =  \langle {\bm f},  {\bm v} \rangle_{\Omega}\quad \text{for all}\;\;{\bm v}\in {\bm H}_{0}({\rm curl};\Omega),  
\end{equation}
where $\langle \cdot,\cdot\rangle_{\Omega}$ denotes the duality pairing between ${\bm H}_{0}({\rm curl};\Omega)^{\prime}$ and ${\bm H}_{0}({\rm curl};\Omega)$. By \cref{ass:coefficients}, there exists a constant $C_{\rm ell}>0$, such that for any subdomain $D\subset\Omega$, 
\begin{equation*}
|a_{D}({\bm v}, {\bm v})|\geq C_{\rm ell} \Vert {\bm v}\Vert^{2}_{{\bm H}({\rm curl};D)} \quad \text{for all}\;\;{\bm v}\in {\bm H}({\rm curl};D).  
\end{equation*}
Therefore, by the Lax--Milgram theorem, problem \cref{eq:weak_form} is uniquely solvable.

% The curl-curl problem \cref{eq:continuous_model_prob} with \cref{ass:coefficients} covers two important cases of Maxwell's equations. The first case is the time-harmonic Maxwell's equations in an absorbing medium, with $\nu=\mu^{-1}$, $\kappa=-\textit{\textomega}^{2}\varepsilon + {\rm i}\textit{\textomega}\sigma$, and ${\bm F} = -{\rm i}\textit{\textomega}{\bm J}$. Here $\mu$ is the magnetic permeability, $\varepsilon$ is the electric permittivity, $\sigma$ is the conductivity, and ${\bm J}$ is the current density. In this case, the coercivity of $a_{D}(\cdot,\cdot)$ holds true; see \cite[Lemma 2.5]{bonazzoli2019domain}. The second case is semi-discretized (in time) Maxwell's equations with an implicit time stepping scheme. In this case, $\nu,\kappa>0$, and ${\bm F}\in {\bm H}_{0}({\rm curl};\Omega)^{\prime}$. Throughout the paper, we assume that the family $\{\mathcal{T}_{h}\}$ is shape regular in the sense of Ciarlet \cite{}.

Now we consider the standard finite element approximation of problem \cref{eq:continuous_model_prob}. Let $\{\mathcal{T}_{h}\}$ be a family of shape-regular simplicial triangulations of $\Omega$, indexed by the mesh size $h:= \max_{K\in \mathcal{T}_{h}} {\rm diam}(K)$. Given an element $K$, let $\mathbb{P}_{r}(K)$ be the set of polynomials of maximum total degree $r$ ($r\in\mathbb{N}$) defined on $K$. The Raviart-Thomas and N\'{e}d\'{e}lec polynomial spaces defined on $K$ are given as
\begin{align}\label{eq:element_polynomials}
\begin{split}
{\bm D}_{r}(K):= \big(\mathbb{P}_{r-1}(K)\big)^{d} \oplus {\bm x}\, \widetilde{\mathbb{P}}_{r-1}(K),\quad {\bm R}_{r}(K):= \big(\mathbb{P}_{r-1}(K)\big)^{d} \oplus \mathcal{\bm S}_{r}(K),
\end{split}
\end{align}
where $\widetilde{\mathbb{P}}_{r-1}(K)$ denotes the space of homogeneous polynomials of degree $r-1$ on $K$, and $\mathcal{\bm S}_{r}(K):=\big\{{\bm p}\in \big(\widetilde{\mathbb{P}}_{r}(K)\big)^{d}: {\bm x}\cdot {\bm p} = 0 \big\}$. On a triangulation $\mathcal{T}_h$, we define the $r$-order $H^{1}(\Omega)$-, ${\bm H}({\rm div};\Omega)$-, and ${\bm H}({\rm curl};\Omega)$-conforming finite element spaces as 
\begin{align}
\begin{split}
S_{h}(\Omega)&:= \big\{v\in H^{1}(\Omega): v|_{K} \in \mathbb{P}_{r}(K)\quad \forall K\in \mathcal{T}_h   \big\},\\    
  {\bm U}_{h}(\Omega)&:= \big\{{\bm v} \in {\bm H}({\rm div};\Omega): {\bm v}|_{K}  \in {\bm D}_r(K)\quad \forall K \in  \mathcal{T}_{h}\big\},\\
  {\bm Q}_{h}(\Omega)&:= \big\{{\bm v} \in {\bm H}({\rm curl};\Omega): {\bm v}|_{K}  \in {\bm R}_r(K)\quad \forall K \in  \mathcal{T}_{h}\big\}, \\
\end{split}
\end{align}
and set $S_{h,0}(\Omega):=S_{h}(\Omega)\cap H_{0}^{1}(\Omega)$, $ {\bm U}_{h,0}(\Omega):=  {\bm U}_{h}(\Omega)\cap {\bm H}_{0}({\rm div};\Omega)$, and ${\bm Q}_{h,0}(\Omega):=  {\bm Q}_{h}(\Omega)\cap {\bm H}_{0}({\rm curl};\Omega)$. 
% We define ${\bm Q}_{h}\subset {\bm H}_{0}({\rm curl};\Omega)$ as the N\'{e}d\'{e}lec curl-conforming finite element space of order $r$ on the mesh $\mathcal{T}_{h}$, consisting of functions with vanishing tangential trace on $\partial \Omega$. More precisely, 
% \begin{equation}
%   {\bm Q}_{h}:= \{{\bf u}_{h} \in \mathbf{H}_{0}({\rm curl};\Omega): \; {\bf u}_{h}|_{K}  \in {\bm R}_r(K), \; \forall K \in  \mathcal{T}_{h}\},
% \end{equation}
% where ${\bm R}_{r}$ is the $r$-order N\'{e}d\'{e}lec polynomial space (see \cite[Chapter 5]{monk2003finite}). 
The standard Galerkin finite element discretization of problem \cref{eq:weak_form} is defined by: Find ${\bm u}_{h}\in {\bm Q}_{h,0}(\Omega)$ such that 
\begin{equation}\label{eq:galerkin_discretization}
a({\bm u}_h, {\bm v}_h) =  \langle {\bm f},  {\bm v}_h \rangle_{\Omega}\quad \text{for all}\;\;{\bm v}_h\in {\bm Q}_{h,0}(\Omega).
\end{equation}
Let $\{{\bm w}_{j}\}_{j=1}^{m}$ be a basis for ${\bm Q}_{h,0}(\Omega)$ with $m:={\rm dim}\,{\bm Q}_{h,0}(\Omega)$. Then, the Galerkin discretization \cref{eq:galerkin_discretization} can be written as a linear system of equations:
\begin{equation}\label{eq:linear_system}
 {\bf Au} = {\bf f}\quad \text{with}\quad {\bf A}\in \mathbb{R}^{m\times m},\quad {\bf A}_{ij}= a({\bm w}_i, {\bm w}_j), \quad \text{and} \quad {\bf f}_i = \langle {\bm f},  {\bm w}_i \rangle_{\Omega}.  
\end{equation}
We conclude this subsection by fixing some notation that will be used in the sequel. For a subdomain $D$ of $\Omega$, we define
\begin{align}\label{eq:local_spaces}
\begin{split}
H^{1}_{\Gamma}(D)&:=\{v\in H^{1}(D): v=0\;\;\, \text{on}\;\;\Gamma\cap \partial D\},  \\
{\bm H}_{\Gamma}({\rm div};D)&:= \{{\bm v}\in {\bm H}({\rm div};D) : {\bm n}\cdot{\bm v}=0\;\;\, \text{on}\;\;\Gamma\cap \partial D\},  \\
{\bm H}_{\Gamma}({\rm curl};D)&:= \{{\bm v}\in {\bm H}({\rm curl};D) : {\bm n}\times{\bm v}={\bm 0}\;\;\, \text{on}\;\;\Gamma\cap \partial D\},\\
S_{h,\Gamma}(D):=\{&v_h|_{D}: v_h\in S_{h,0}(\Omega)\},\; S_{h,0}(D):=S_{h,\Gamma}(D)\cap H_{0}^{1}(D),\\
  {\bm U}_{h,\Gamma}(D):=\{&{\bm v}_h|_{D}: {\bm v}_h\in {\bm U}_{h,0}(\Omega)\},\;  {\bm U}_{h,0}(D) :=  {\bm U}_{h,\Gamma}(D)\cap {\bm H}_{0}({\rm div};D),\\
  {\bm Q}_{h,\Gamma}(D):=\{&{\bm v}_h|_{D}: {\bm v}_h\in {\bm Q}_{h,0}(\Omega)\},\;  {\bm Q}_{h,0}(D): =  {\bm Q}_{h,\Gamma}(D)\cap {\bm H}_{0}({\rm curl};D).
\end{split}
\end{align}
Furthermore, for an element $K\in \mathcal{T}_h$, let $\Sigma_{K}: {\bm H}^{1}(K)\rightarrow {\bm D}_{r}(K)$ and $\Pi_{K}: {\bm H}^{1}(K)\rightarrow {\bm R}_{r}(K)$
be, respectively, the Raviart-Thomas and N\'{e}d\'{e}lec interpolation operators defined by specifying the degrees of freedom (see, e.g., \cite[Chapter 2.5]{boffi2013mixed}). The global interpolation operators $\Sigma_h$ and $\Pi_h$ are defined elementwise as $\Sigma_h{\bm u}|_{K}:= \Sigma_{K} ({\bm u}|_K)$ and $\Pi_h{\bm u}|_{K}:= \Pi_{K} ({\bm u}|_K)$. The operators $\Sigma_h$ and $\Pi_h$ satisfy the following "commuting diagram property":
\begin{equation}\label{eq:commuting_property}
\nabla\times \Pi_h {\bm u} = \Sigma_h \nabla\times {\bm u}\quad \forall {\bm u} \in {\bm H}^{1}(\Omega) \;\text{with}\;\nabla\times {\bm u}\in {\bm H}^{1}(\Omega).    
\end{equation}

\subsection{Preliminaries}\label{subsec:preliminaries}
In this subsection, we present some lemmas and preliminaries that will be used for the analysis in \cref{sec-4}. The first lemma gives the regular decomposition of vector fields in ${\bm H}_{0}({\rm curl};\Omega)$, which can be found in \cite[Theorem 2, Remark 3]{hiptmair2020review} and \cite[Lemma 3.1]{faustmann2022h}. We note that while the proofs in \cite{hiptmair2020review,faustmann2022h} focus on the three-dimensional case, an extension to $\mathbb{R}^{2}$ is straightforward.
\begin{lemma}[Regular decomposition]\label{lem:regular_decomposition}
Let $\Omega\subset \mathbb{R}^{d}$ $(d=2,3)$ be a bounded Lipschitz polyhedral domain. Then for each ${\bm u}\in {\bm H}_{0}({\rm curl};\Omega)$, there exist ${\bm z}\in {\bm H}_{0}^{1}(\Omega)$ and $p\in H_{0}^{1}(\Omega)$ such that ${\bm u} = {\bm z} + \nabla p$, with the estimates
\begin{align}\label{eq:estimate_regular_decomp}
\begin{split}
 \Vert {\bm z}\Vert_{{\bm L}^{2}(\Omega)} +   \Vert  p\Vert_{L^{2}(\Omega)} &\leq C\Vert{\bm u}  \Vert_{{\bm L}^{2}(\Omega)},\\
\Vert \nabla {\bm z}\Vert_{{\bm L}^{2}(\Omega)} + {\rm diam}(\Omega)^{-1} \Vert {\bm z}\Vert_{{\bm L}^{2}(\Omega)}+ \Vert  \nabla p\Vert_{{\bm L}^{2}(\Omega)} &\leq C\Vert \nabla \times {\bm u}  \Vert_{{\bm L}^{2}(\Omega)}, 
\end{split}
\end{align}
where $C>0$ depends only on the shape of $\Omega$, but not on ${\rm diam}(\Omega)$.   
\end{lemma}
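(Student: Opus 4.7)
The plan is to reduce to a topologically trivial ambient space via zero-extension, construct a global vector potential through the Newtonian potential, and then enforce homogeneous Dirichlet traces on both pieces by a single $H^{2}$ lifting on $\Omega$. This strategy sidesteps the cohomological obstructions that would arise if one worked directly on $\Omega$ when $\Omega$ or $\Gamma$ has nontrivial topology, because the ambient $\mathbb{R}^{d}$ is simply connected and has trivial topology.

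First, since ${\bm n}\times{\bm u}={\bm 0}$ on $\Gamma$, the zero-extension $\tilde{\bm u}$ of ${\bm u}$ lies in ${\bm H}(\mathrm{curl};\mathbb{R}^{d})$ with $\|\tilde{\bm u}\|_{{\bm L}^{2}(\mathbb{R}^{d})}=\|{\bm u}\|_{{\bm L}^{2}(\Omega)}$ and $\|\nabla\times\tilde{\bm u}\|_{{\bm L}^{2}(\mathbb{R}^{d})}=\|\nabla\times{\bm u}\|_{{\bm L}^{2}(\Omega)}$. Defining the componentwise Newtonian potential $\tilde{\bm\psi}:=(-\Delta)^{-1}(\nabla\times\tilde{\bm u})$, the identity $\nabla\cdot(\nabla\times\tilde{\bm u})=0$ gives $\nabla\cdot\tilde{\bm\psi}=0$, so $\tilde{\bm z}:=\nabla\times\tilde{\bm\psi}$ satisfies $\nabla\times\tilde{\bm z}=-\Delta\tilde{\bm\psi}=\nabla\times\tilde{\bm u}$ on $\mathbb{R}^{d}$, with the Calderón–Zygmund bound $\|\nabla\tilde{\bm z}\|_{{\bm L}^{2}(\mathbb{R}^{d})}\le C\|\nabla\times{\bm u}\|_{{\bm L}^{2}(\Omega)}$ (in $d=2$ the analogous construction uses the scalar curl and its vector stream function). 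Since $\tilde{\bm u}-\tilde{\bm z}$ is curl-free on the simply connected $\mathbb{R}^{d}$, there exists a scalar potential $\tilde p$ with $\nabla\tilde p=\tilde{\bm u}-\tilde{\bm z}$, normalized to vanish at infinity.

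The decisive step is the boundary correction. Since $\tilde{\bm u}\equiv{\bm 0}$ on $\mathbb{R}^{d}\setminus\bar\Omega$, the identity $\tilde{\bm z}=-\nabla\tilde p$ holds in the exterior, yielding the enhanced trace regularity $\tilde p|_{\Gamma}\in H^{3/2}(\Gamma)$ and the compatibility $\tilde{\bm z}|_{\Gamma}=-\nabla\tilde p|_{\Gamma}$. I would then choose $w\in H^{2}(\Omega)$ via a standard trace lifting matching $w|_{\Gamma}=\tilde p|_{\Gamma}$ and $(\nabla w\cdot{\bm n})|_{\Gamma}=-(\tilde{\bm z}\cdot{\bm n})|_{\Gamma}$; the tangential data of $\nabla w$ on $\Gamma$ are automatically compatible by the exterior identity. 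Setting
\[
{\bm z}:=\tilde{\bm z}|_{\Omega}+\nabla w,\qquad p:=\tilde p|_{\Omega}-w,
\]
one checks that ${\bm u}={\bm z}+\nabla p$ on $\Omega$, ${\bm z}\in{\bm H}_{0}^{1}(\Omega)$, and $p\in H_{0}^{1}(\Omega)$. The estimates \cref{eq:estimate_regular_decomp} then follow by combining the Calderón–Zygmund bound for $\tilde{\bm z}$, Poincaré/Sobolev inequalities on a ball of radius $\sim\mathrm{diam}(\Omega)$ enclosing $\Omega$, and the continuity of the trace and lifting operators, together with a standard scaling argument that makes $C$ depend only on the shape of $\Omega$. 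The hardest part is orchestrating this correction when $\Omega$ has nontrivial topology (multiple boundary components, handles, etc.), for which the exterior-identity argument is essential: it renders the lifting data intrinsically consistent because $\tilde p$ is globally defined on the simply connected $\mathbb{R}^{d}$, so no cohomological obstruction appears.
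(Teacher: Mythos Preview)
The paper does not prove \cref{lem:regular_decomposition}; it is quoted from \cite{hiptmair2020review} and \cite{faustmann2022h}, where the arguments are based on regularized Poincar\'e--type lifting (Costabel--McIntosh) operators rather than on Newtonian potentials. Your route---zero-extension to $\mathbb{R}^d$, Newtonian potential, then a boundary correction---is a legitimate and classical alternative (cf.\ Pasciak--Zhao, Hiptmair--Xu), and the observation that the exterior identity $\tilde{\bm z}=-\nabla\tilde p$ supplies intrinsically compatible trace data is exactly the right mechanism for bypassing the topology of $\Omega$ and $\Gamma$.

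Two technical points are glossed over. First, an abstract $H^{2}$ trace lifting $(g_0,g_1)\in H^{3/2}(\Gamma)\times H^{1/2}(\Gamma)\to H^{2}(\Omega)$ is delicate on merely Lipschitz boundaries; it is cleaner here to avoid it entirely by setting $w:=E\big(\tilde p|_{\mathbb{R}^d\setminus\overline\Omega}\big)\big|_{\Omega}$ with $E$ a Stein universal extension operator for the exterior domain, which automatically matches both Dirichlet and full gradient traces and is bounded simultaneously on $H^{1}$ and $H^{2}$. Second, your closing sentence conflates the two estimates in \cref{eq:estimate_regular_decomp}: Calder\'on--Zygmund controls $\nabla\tilde{\bm z}$ by $\|\nabla\times{\bm u}\|_{{\bm L}^2}$, but the bound $\|\tilde{\bm z}\|_{{\bm L}^2}\le\|{\bm u}\|_{{\bm L}^2}$ needed for the first line requires recognizing $\tilde{\bm z}=\nabla\times(-\Delta)^{-1}\nabla\times\tilde{\bm u}$ as the Leray projector of $\tilde{\bm u}$; the correction $\nabla w$ must then be shown to respect \emph{both} scales, which follows from the simultaneous $H^{1}/H^{2}$ continuity of the Stein extension applied to $\tilde p|_{\text{ext}}$ (where $\nabla\tilde p=-\tilde{\bm z}$). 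None of this is fatal, but as written the sketch does not yet deliver the two separate estimates with a shape-only constant.
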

The next lemma gives error estimates for the interpolation operator $\Pi_h$ defined above.
\begin{lemma}[{\cite[Theorem 5.41]{monk2003finite}}]\label{lem:error_estimates_interpolation}
Let $\Pi_h$ be the N\'{e}d\'{e}lec interpolation operator, and let $K\in \mathcal{T}_h$ be an element. There exists a constant $C$ independent of $h_K$ such that for $1\leq s\leq r$,
\begin{align}\label{eq:error_nedelec_interpolation_1}
\begin{split}
\Vert {\bm u} -   \Pi_h{\bm u}\Vert_{{\bm L}^{2}(K)}  &\leq Ch_{K}^{s}\big(|{\bm u}|_{{\bm H}^{s}(K)}+  |\nabla\times {\bm u}|_{{\bm H}^{s}(K)}\big),\\
\Vert \nabla\times ({\bm u} -   \Pi_h{\bm u})\Vert_{{\bm L}^{2}(K)}  &\leq Ch_{K}^{s}|\nabla\times {\bm u}|_{{\bm H}^{s}(K)},
\end{split}
\end{align}
where $|\cdot|_{{\bm H}^{1}(K)}$ denotes the ${\bm H}^{1}(K)$ semi-norm. Moreover, if ${\bm u}\in {\bm H}^{1}(K)$ and $\nabla\times {\bm u}|_{K}\in {\bm D}_r(K)$, where ${\bm D}_r(K)$ is defined in \cref{eq:element_polynomials}, then
\begin{equation}\label{eq:error_nedelec_interpolation_2}
\Vert {\bm u} - \Pi_h{\bm u}\Vert_{{\bm L}^{2}(K)}\leq Ch_{K}\big(|{\bm u}|_{{\bm H}^{1}(K)} + \Vert \nabla \times {\bm u}\Vert_{{\bm L}^{2}(K)}\big).
\end{equation}
\end{lemma}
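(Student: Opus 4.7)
The plan is to prove this by the standard reference-element technique, adapted to the covariant transformation appropriate for $\bm H(\mathrm{curl})$. I would fix a reference simplex $\hat K$ with affine map $F_K(\hat{\bm x}) = B_K\hat{\bm x} + {\bm b}_K$ and work with the Piola pull-back $\hat{\bm u}(\hat{\bm x}) = B_K^{T}({\bm u}\circ F_K)(\hat{\bm x})$, under which the Nédélec interpolant commutes with its reference counterpart ($\widehat{\Pi_K {\bm u}} = \hat\Pi \hat{\bm u}$) and $\nabla\times$ relates to the reference curl through the contravariant Piola transform. Shape regularity gives $\|B_K\|\sim h_K$ and $\|B_K^{-1}\|\sim h_K^{-1}$, and the corresponding change-of-variables formulas convert reference-element seminorm estimates into $K$-estimates with the expected powers of $h_K$.

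For \cref{eq:error_nedelec_interpolation_1}, the core step is to show on $\hat K$ that the linear operator $\hat{\bm u} \mapsto \hat{\bm u} - \hat\Pi\hat{\bm u}$ is bounded from $\{\hat{\bm v}\in \hat{\bm H}^{s}(\hat K) : \nabla\times\hat{\bm v}\in \hat{\bm H}^{s}(\hat K)\}$ (equipped with $|\hat{\bm u}|_{\hat{\bm H}^s} + |\nabla\times\hat{\bm u}|_{\hat{\bm H}^s}$) into $\hat{\bm L}^2(\hat K)$. Boundedness reduces to controlling each edge, face, and interior moment defining $\hat\Pi$ by these seminorms; for $s\ge 1$ this follows from a multiplicative trace inequality together with the fact that tangential traces on an edge require the combined regularity of $\hat{\bm u}$ and $\nabla\times\hat{\bm u}$. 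Since $\hat\Pi$ reproduces $(\mathbb{P}_{r-1}(\hat K))^d\subset\bm R_r(\hat K)$, a Bramble--Hilbert / Deny--Lions argument applied to the pair $(\hat{\bm u},\nabla\times\hat{\bm u})$ delivers the reference-level bound; scaling back yields the factor $h_K^{s}$.

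The curl estimate follows almost immediately from the commuting diagram property \cref{eq:commuting_property}: writing $\nabla\times({\bm u}-\Pi_h{\bm u}) = \nabla\times{\bm u} - \Sigma_h(\nabla\times{\bm u})$ reduces the problem to a Raviart--Thomas interpolation estimate for $\nabla\times{\bm u}$, which is obtained by the same reference-element strategy but using the contravariant Piola transform and the approximation properties of $\Sigma_h$ on $\bm D_r(\hat K)$. For \cref{eq:error_nedelec_interpolation_2}, I would observe that $\nabla\times{\bm u}|_K\in\bm D_r(K)$ forces $\Sigma_K(\nabla\times{\bm u}) = \nabla\times{\bm u}$, hence via the commuting diagram $\nabla\times({\bm u}-\Pi_K{\bm u}) = {\bm 0}$. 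On the reference element, since $\hat{\bm D}_r(\hat K)$ is finite-dimensional, all Sobolev norms are equivalent there with a universal constant, so $|\nabla\times\hat{\bm u}|_{\hat{\bm H}^s(\hat K)}$ can be replaced by $\|\nabla\times\hat{\bm u}\|_{\hat{\bm L}^2(\hat K)}$; repeating the Bramble--Hilbert argument with $s=1$ and this replacement gives the $h_K$-estimate with only the $\bm H^1$ seminorm of $\bm u$ and the $\bm L^2$ norm of $\nabla\times{\bm u}$.

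The main obstacle I expect is the very first point: verifying that the Nédélec edge/face moments of $\hat{\bm u}$ are bounded by $|\hat{\bm u}|_{\hat{\bm H}^s} + |\nabla\times\hat{\bm u}|_{\hat{\bm H}^s}$ rather than by ${\bm H}^s$ itself. This forces the specific combination of seminorms appearing in \cref{eq:error_nedelec_interpolation_1} and is the technical reason the Nédélec estimate differs from the usual Lagrange one; handling it carefully also explains why in the finite-dimensional-curl setting of \cref{eq:error_nedelec_interpolation_2} only the $\bm L^2$-norm of the curl suffices.
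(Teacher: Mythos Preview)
The paper does not prove this lemma at all: it is stated with a citation to \cite[Theorem 5.41]{monk2003finite} and used as a black box. Your proposal is a faithful outline of the standard argument in Monk's book (reference-element scaling via the covariant Piola transform, Bramble--Hilbert combined with the boundedness of the edge/face moments in terms of both $\hat{\bm u}$ and $\nabla\times\hat{\bm u}$, and the commuting diagram to handle the curl estimate), so there is nothing to compare against within the paper itself.

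One small comment on \cref{eq:error_nedelec_interpolation_2}: your observation that $\nabla\times({\bm u}-\Pi_K{\bm u})=\bm 0$ when $\nabla\times{\bm u}\in\bm D_r(K)$ is correct, but be careful with the norm-equivalence step. The constant in $|\widehat{\nabla\times{\bm u}}|_{\hat{\bm H}^{1}(\hat K)}\leq C\|\widehat{\nabla\times{\bm u}}\|_{\hat{\bm L}^2(\hat K)}$ is fine on the reference element, but you must then scale $\|\widehat{\nabla\times{\bm u}}\|_{\hat{\bm L}^2(\hat K)}$ back to $K$ using the \emph{contravariant} Piola transform, which introduces an extra factor of $h_K$ relative to the $|{\bm u}|_{\bm H^1(K)}$ term; this is precisely what produces the homogeneous $h_K$ in front of both terms in \cref{eq:error_nedelec_interpolation_2}. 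As long as you track that scaling, the argument goes through.
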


The following lemma shows that the $r$-order derivatives of a polynomial ${\bm p}\in {\bm R}_{r}(K)$ can be bounded by the $(r-1)$-order derivatives of its curl. This property is vital to the proof of a super-approximation estimate for N\'{e}d\'{e}lec elements in \cref{subsec:proof of Cac-inequality}. Whereas it is known for the lowest-order N\'{e}d\'{e}lec element (see, e.g., \cite[Lemma 4.1]{faustmann2022h}), the following result for arbitrary high-order elements seems new. 
\begin{lemma}\label{lem:equiv_norm}
There exists a constant $C$ depending only on $r$ and $d$, such that for any ${\bm p}\in {\bm R}_{r}(K)$, with ${\bm R}_{r}(K)$ defined in \cref{eq:element_polynomials}, 
\begin{equation}\label{eq:norm-control}
    \sum_{|\bm \alpha| = r} |\partial^{\bm \alpha} {\bm p} | \leq C\sum_{|\bm \alpha| = r-1} |\partial^{\bm \alpha} \nabla\times {\bm p} |,
\end{equation}
where ${\bm \alpha}=(\alpha_1,\cdots,\alpha_{d})$ is the multi-index with $\alpha_i\geq 0$, $|{\bm \alpha}| = \alpha_1+\cdots\alpha_d$, and $\partial^{\bm \alpha} = \partial^{\alpha_1}_{x_1}\cdots \partial^{\alpha_d}_{x_d}$. 
\end{lemma}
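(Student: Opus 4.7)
The plan is to exploit the direct sum decomposition $\bm R_r(K) = (\mathbb{P}_{r-1}(K))^d \oplus \mathcal{S}_r(K)$ to reduce \cref{eq:norm-control} to a norm equivalence on the finite-dimensional space $\mathcal{S}_r(K)$. The crux of the argument is then verifying injectivity of the curl operator on $\mathcal{S}_r(K)$.

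First, I would write $\bm p = \bm p_1 + \bm p_2$ with $\bm p_1\in (\mathbb{P}_{r-1}(K))^d$ and $\bm p_2\in \mathcal{S}_r(K)$. Since $\bm p_1$ has total polynomial degree at most $r-1$, every order-$r$ derivative $\partial^{\bm\alpha}\bm p_1$ vanishes identically; similarly $\nabla\times\bm p_1$ has degree at most $r-2$, so every $\partial^{\bm\alpha}\nabla\times\bm p_1$ with $|\bm\alpha|=r-1$ vanishes. Consequently both sides of \cref{eq:norm-control} depend only on $\bm p_2$, and it suffices to prove the inequality for $\bm q\in \mathcal{S}_r(K)$.

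Next, I would observe that for $\bm q\in\mathcal{S}_r(K)\subset (\widetilde{\mathbb{P}}_r(K))^d$, each $\partial^{\bm\alpha}\bm q$ with $|\bm\alpha|=r$ is a constant vector (proportional to a coefficient of $\bm q$), and similarly $\partial^{\bm\alpha}\nabla\times\bm q$ is constant for $|\bm\alpha|=r-1$. Define on the finite-dimensional space $\mathcal{S}_r(K)$ (whose dimension depends only on $r$ and $d$) the maps
\[
N_1(\bm q) := \sum_{|\bm\alpha|=r}|\partial^{\bm\alpha}\bm q|, \qquad N_2(\bm q) := \sum_{|\bm\alpha|=r-1}|\partial^{\bm\alpha}\nabla\times\bm q|.
\]
Since the coefficients of a homogeneous polynomial of degree $r$ are recovered up to factorials from its order-$r$ derivatives, $N_1$ is a norm on $\mathcal{S}_r(K)$, while $N_2$ is manifestly a seminorm. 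By equivalence of norms on a finite-dimensional vector space, \cref{eq:norm-control} reduces to the statement that $N_2$ is in fact a norm, i.e., $\nabla\times\bm q = \bm 0$ for $\bm q\in\mathcal{S}_r(K)$ forces $\bm q = \bm 0$.

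This injectivity claim is the main obstacle. To prove it, suppose $\bm q\in\mathcal{S}_r(K)$ satisfies $\nabla\times\bm q = \bm 0$. Applying the Poincaré lemma to polynomial fields on the star-shaped set $\mathbb{R}^d$ yields a scalar polynomial $\phi$ with $\bm q = \nabla\phi$; modulo an irrelevant additive constant, $\phi$ must be homogeneous of degree $r+1$ because $\bm q$ is homogeneous of degree $r$. Combining the defining constraint $\bm x\cdot\bm q = 0$ of $\mathcal{S}_r(K)$ with Euler's identity for homogeneous polynomials, we find
\[
0 = \bm x\cdot\bm q = \bm x\cdot\nabla\phi = (r+1)\phi,
\]
so $\phi \equiv 0$ and therefore $\bm q = \bm 0$. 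The argument is identical in two dimensions with $\nabla\times$ interpreted as the scalar curl, and the resulting equivalence constant in \cref{eq:norm-control} depends only on $r$ and $d$, as claimed.
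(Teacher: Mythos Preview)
Your proof is correct and takes a genuinely different route from the paper. Both arguments begin with the same reduction to $\mathcal{S}_r(K)$ via the direct sum decomposition, but then diverge: the paper writes down an explicit basis of $\mathcal{S}_r(K)$ (taken from \cite{bergot2010generation} in the case $d=3$), computes the curl of each basis vector by hand, and verifies that the order-$(r-1)$ derivatives of $\nabla\times\bm p$ pairwise control the expansion coefficients. Your argument instead packages both sides of \cref{eq:norm-control} as seminorms $N_1$, $N_2$ on the finite-dimensional space $\mathcal{S}_r(K)$, reduces the inequality to the injectivity of $\nabla\times$ on $\mathcal{S}_r(K)$, and proves that injectivity cleanly via the Poincar\'e lemma combined with Euler's identity $\bm x\cdot\nabla\phi = (r+1)\phi$. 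Your approach is shorter, dimension-independent, and avoids any explicit basis computation; the paper's approach, while more laborious, is constructive and in principle gives access to an explicit constant.
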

\begin{proof}
We prove this lemma by a direct computation. Noting that ${\bm R}_{r}(K)= \big(\mathbb{P}_{r-1}(K)\big)^{d} \oplus \mathcal{\bm S}_{r}(K)$, and that for ${\bm p}\in \big(\mathbb{P}_{r-1}(K)\big)^{d}$, $\partial^{\bm \alpha} {\bm p} = {\bm 0}$ for $|{\bm \alpha}| = r$, and $\partial^{\bm \alpha} \nabla\times {\bm p} = {\bm 0}$ for $|{\bm \alpha}| = r-1$, we only need to prove inequality \cref{eq:norm-control} for ${\bm p}\in \mathcal{\bm S}_{r}(K)$. Without loss of generality, we assume that $d=3$. By \cite{bergot2010generation}, the space $\mathcal{\bm S}_{r}(K)$ is spanned by the following polynomials vectors:
\begin{align*}
&\text{for} \;\;0\leq m+n\leq r,\; m\neq 0 \;\;\text{and}\;\;n\neq r: 
\left(
\begin{array}{cc}
x_1^{m-1}x_2^{n}x_3^{r-m-n+1} \\
0\\
-x_1^{m}x_2^{n}x_3^{r-m-n}
\end{array}
      \right),  \\[1ex] 
&\text{for} \;\;0\leq m+n\leq r,\; m\neq r \;\;\text{and}\;\;n\neq 0: 
\left(
\begin{array}{cc}
0\\
x_1^{m}x_2^{n-1}x_3^{r-m-n+1} \\
-x_1^{m}x_2^{n}x_3^{r-m-n}
\end{array}
      \right),  \\[1ex]
 &\text{for} \;\;m+n=r+1,\; m\neq 0 \;\;\text{and}\;\;n\neq 0: 
\left(
\begin{array}{cc}
x_1^{m-1}x_2^{n} \\
-x_1^{m}x_2^{n-1} \\
0
\end{array}
\right).
\end{align*}
For convenience, we denote these three sets of polynomials vectors by ${\bm u}_{m,n}$,  ${\bm v}_{m,n}$, and ${\bm w}_{m,n}$, respectively. Then, any ${\bm p}\in {\bm R}_{r}(K)$ can be written as 
\begin{equation}
    {\bm p} = \sum_{\substack{0\leq m+n\leq r,\\ {m\neq 0, \,n\neq r}}} a_{m,n}{\bm u}_{m,n} + \sum_{\substack{0\leq m+n\leq r,\\ {m\neq r, \,n\neq 0}}} b_{m,n}{\bm v}_{m,n} + \sum_{\substack{ m+n=r+1,\\ {m\neq 0, \,n\neq 0}}} c_{m,n}{\bm w}_{m,n},
\end{equation}
with $a_{m,n},b_{m,n}, c_{m,n}\in \mathbb{R}$. It suffices to prove that 
\begin{equation}\label{eq:reduced_result}
\sum_{|\bm \alpha| = r-1} |\partial^{\bm \alpha} \nabla\times {\bm p} |\geq C\Bigg(\sum_{\substack{0\leq m+n\leq r,\\ {m\neq 0, \,n\neq r}}} |a_{m,n}|  + \sum_{\substack{0\leq m+n\leq r,\\ {m\neq r, \,n\neq 0}}} |b_{m,n}| + \sum_{\substack{ m+n=r+1,\\ {m\neq 0, \,n\neq 0}}} |c_{m,n}|  \Bigg).    
\end{equation}
Noting that 
\begin{align*}
\begin{split}
    \nabla\times {\bm u}_{m,n} = \big(-nx_1^{m}x_2^{n-1}x_3^{r-m-n}, \;& (r-n+1)x_1^{m-1}x_2^{n}x_3^{r-m-n},\\
        -n x_1^{m-1}x_2^{n-1}x_3^{r-m-n+1} \big),  \;\;\nabla\times {\bm v}_{m,n} =& \big(-(r-m+1)x_1^{m}x_2^{n-1}x_3^{r-m-n}, \\
          mx_1^{m-1}x_2^{n}x_3^{r-m-n}, mx_1^{m-1}x_2^{n-1}x_3^{r-m-n+1} \big), & \nabla\times {\bm w}_{m,n} = \big(0, 0, -(m+n) x_1^{m-1}x_2^{n-1}\big),
\end{split}
\end{align*}
we find that for any $(m>1, n>1, m+n<r)$, the sum $\sum_{|\bm \alpha| = r-1} |\partial^{\bm \alpha} \nabla\times {\bm p}|^{2}$ contains the following two terms involving $a_{m,n}$ and $b_{m,n}$:
\begin{align*}
\begin{split}
    [m(n-1)(r-m-n)]^{2} \big(na_{m,n} &+ (r-m+1)b_{m,n}\big)^{2} \\
    + [(m-1)n(r-m-n)]^{2} \big((r-n+1)a_{m,n} &+ mb_{m,n}\big)^{2}  \geq C(|a_{m,n}|^{2} + |b_{m,n}|^{2}),
\end{split}
\end{align*}
where the inequality above follows from the fact that $m+n\neq r+1$. Similar estimates hold for other coefficients $(a_{m,n},b_{m,n})$ and also $c_{m,n}$. Therefore, inequality \cref{eq:reduced_result} holds true and the lemma is thus proved.
\end{proof}

Next we discuss the spaces of "harmonic forms". For a domain $D\subset \Omega$ which is a union of elements in $\mathcal{T}_h$ and may intersect $\Gamma$, we recall the spaces defined in \cref{eq:local_spaces} and consider the de Rham complex with \textit{partial boundary conditions}:
\begin{equation}\label{eq:continuous_deRham_complex} 
H^{1}_{\Gamma}(D) \xrightarrow{\;\; \nabla\;\;} {\bm H}_{\Gamma}({\rm curl};D) \xrightarrow{\;\; \nabla\times \;\;} {\bm H}_{\Gamma}({\rm div};D) \xrightarrow{\;\; \nabla\cdot \;\;} L^{2}(D).
\end{equation}
The corresponding discrete de Rham complex reads
\begin{equation}\label{eq:discrete_deRham_complex} 
S_{h,\Gamma}(D) \xrightarrow{\;\; \nabla\;\;} {\bm Q}_{h,\Gamma}(D) \xrightarrow{\;\; \nabla\times \;\;} {\bm U}_{h,\Gamma}(D) \xrightarrow{\;\; \nabla\cdot \;\;} Z_h(D),
\end{equation}
where $Z_h(D)\subset L^{2}(D)$ denotes the space consisting of piecewise polynomials of maximum total degree $r-1$. Furthermore, we consider the space of harmonic 1-forms (the first cohomology space) associated with the de Rham complex with the $\kappa$-weighted $L^{2}$-norm:
\begin{align}\label{eq:continuous-harmonic-forms}
\begin{split}
\mathcal{\bm H}(D):= \big\{{\bm v}\in {\bm H}_{\Gamma}({\rm curl};D): \nabla\times {\bm v} = {\bm 0},\;(\kappa{\bm v}, \nabla \xi)_{{\bm L}^{2}(D)} = 0 \;\, \forall\,\xi\in H^{1}_{\Gamma}(D)\big\},       
\end{split}
\end{align}
and its discrete counterpart:
\begin{align}\label{eq:general_discrete-harmonic-forms}
\begin{split}
\mathcal{\bm H}_{h}(D):= \big\{{\bm v}_h\in \,&{\bm Q}_{h,\Gamma}(D): \nabla\times {\bm v}_h = {\bm 0},\;\\
(\kappa{\bm v}_h, \nabla \xi_h)_{{\bm L}^{2}(D)} &= 0 \;\, \forall\,\xi_h\in S_{h,\Gamma}(D)\big\}.       
\end{split}
\end{align}
Whereas the de Rham complexes and the spaces of harmonic forms in the case of no boundary conditions ($\partial D\cap \Gamma = \emptyset$) or of full boundary conditions ($\partial D= \Gamma$) have been well studied (see \cite{arnold2006finite,arnold2010finite,arnold2018finite}), the case of partial boundary conditions is much less investigated. Of particular importance to our analysis is the dimension of $\mathcal{\bm H}_{h}(D)$. Indeed, ${\rm dim}\,\mathcal{\bm H}_{h}(D)$ depends only on $D$, but not on the underlying triangulation, as shown by the following lemma. We also note that ${\rm dim}\,\mathcal{\bm H}_{h}(D)$ is independent of the coefficient $\kappa$ that satisfies \cref{ass:coefficients}(ii); see \cite[section 6.1]{arnold2010finite}.

\begin{lemma}\label{lem:dimension_harmonic_forms}
Let $D$ be a subdomain of $\Omega$ which is a union of elements in $\mathcal{T}_h$. Then, ${\rm dim}\,\mathcal{\bm H}_{h}(D) = {\rm dim}\,\mathcal{\bm H}(D)$.
\end{lemma}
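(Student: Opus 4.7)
The plan is to follow the standard strategy from finite element exterior calculus: both $\mathcal{\bm H}(D)$ and $\mathcal{\bm H}_h(D)$ will be identified with the first cohomology spaces of the complexes \cref{eq:continuous_deRham_complex} and \cref{eq:discrete_deRham_complex}, and then a bounded cochain projection will be used to show that these two cohomologies are isomorphic.

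First, I would recall the Hodge decomposition for each complex. For the discrete complex, $\mathcal{\bm H}_h(D)$ is, by definition, the $\kappa$-orthogonal complement in $\ker(\nabla\times)\cap {\bm Q}_{h,\Gamma}(D)$ of the image of $\nabla: S_{h,\Gamma}(D)\to {\bm Q}_{h,\Gamma}(D)$, and hence is canonically isomorphic to the first cohomology $H^{1}_{h}(D) := \ker(\nabla\times|_{{\bm Q}_{h,\Gamma}(D)})/\nabla S_{h,\Gamma}(D)$. The same identification holds in the continuous setting, giving $\mathcal{\bm H}(D)\cong H^{1}(D)$, the first cohomology of \cref{eq:continuous_deRham_complex}. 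Thus it suffices to prove $\dim H^{1}_{h}(D) = \dim H^{1}(D)$.

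Next, I would invoke (or construct) a family of bounded cochain projections $\{\pi^{0}_{h},\pi^{1}_{h},\pi^{2}_{h},\pi^{3}_{h}\}$ from the continuous complex \cref{eq:continuous_deRham_complex} to the discrete complex \cref{eq:discrete_deRham_complex}, satisfying (i) $\pi^{k}_{h}$ maps the $k$-th space in \cref{eq:continuous_deRham_complex} into the corresponding space in \cref{eq:discrete_deRham_complex}, (ii) the diagram commutes with the differentials $\nabla,\nabla\times,\nabla\cdot$, and (iii) each $\pi^{k}_{h}$ is the identity on the discrete subspace. Such projections, in the form of smoothed quasi-interpolations, have been developed by Sch\"oberl, Christiansen--Winther, and Arnold--Falk--Winther for the cases of no or full boundary conditions, and have since been extended to complexes with partial boundary conditions on a Lipschitz subdomain (one may adapt the construction by combining a local averaging with the canonical N\'ed\'elec/Raviart--Thomas interpolants, taking care to preserve the vanishing trace on $\Gamma\cap\partial D$). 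Granting the existence of such $\pi^{k}_{h}$, a standard homological-algebra argument shows that the chain map induced on cohomology, $[\pi^{1}_{h}]: H^{1}(D)\to H^{1}_{h}(D)$, is an isomorphism: surjectivity follows because $\pi^{1}_{h}$ is the identity on discrete cocycles, while injectivity follows from the property that any discrete coboundary that is a continuous coboundary can be written as $\pi^{1}_{h}(\nabla p) = \nabla \pi^{0}_{h}p$ via the commuting diagram. In particular, $\dim H^{1}_{h}(D) = \dim H^{1}(D)$.

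Combining the Hodge-type identifications with this cohomology isomorphism yields $\dim\mathcal{\bm H}_h(D)=\dim\mathcal{\bm H}(D)$. The main obstacle I anticipate is verifying the existence of the bounded, commuting cochain projection under partial boundary conditions on the possibly topologically nontrivial subdomain $D$; once that ingredient is in place, the passage to equality of dimensions is a routine application of the abstract framework of Arnold--Falk--Winther. Independence of the dimension from the coefficient $\kappa$ is immediate, since any two admissible $\kappa$ give equivalent inner products and therefore the same $\kappa$-orthogonal complement dimension of $\nabla S_{h,\Gamma}(D)$ in the kernel of $\nabla\times$.
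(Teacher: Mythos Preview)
Your proposal is correct and follows essentially the same approach as the paper: identify the harmonic form spaces with first cohomology of the respective de Rham complexes and invoke the abstract isomorphism of cohomology via a bounded cochain projection. The paper resolves the obstacle you anticipate by citing the recent construction of smoothed commuting projections for partial boundary conditions in \cite{licht2019smoothed} (see also \cite{hiptmair2019discrete}), and also notes an alternative route via de Rham's theorem and de Rham mappings.
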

\begin{proof}
We show this result by following the proofs in \cite{arnold2018finite,arnold2006finite} for the case of no boundary conditions. In fact, there are two ways to prove the result. The first is based on de Rham's theorem \cite[Theorem 2.2]{arnold2018finite} and de Rham's mappings. The second way is to use \cite[Theorem 5.1]{arnold2018finite}, which shows the isomorphism of cohomology between the continuous and discrete complexes under the assumption of the existence of a bounded cochain projection. The required projection has been constructed recently in \cite{licht2019smoothed} for the case of partial boundary conditions (see also \cite{hiptmair2019discrete}). 
\end{proof}

Next we discuss the dimension of $\mathcal{\bm H}(D)$. When $\partial D\cap \Gamma \neq \emptyset$, ${\rm dim}\,\mathcal{\bm H}(D)$ depends not only on the topology of $D$, but also on the connectedness of $\partial D\cap \Gamma$. Let $b_{1}(D,\Gamma\cap \partial D)$ denote the first \textit{relative} Betti number of the pair ($D$, $\Gamma\cap \partial D$), which is defined as the dimension of the first singular homology group of $D$ relative to $\Gamma\cap \partial D$; see \cite[Chapter 4, Section 4]{spanier1989algebraic} for details. By \cite[Theorem 5.3]{gol2011hodge}, we have the following result.
\begin{lemma}\label{lem:Betti-numbers}
Let $D$ be a subdomain of $\Omega$ which is a union of elements in $\mathcal{T}_h$. Then, ${\rm dim}\,\mathcal{\bm H}(D) = b_{1}(D, \Gamma\cap \partial D)$.
\end{lemma}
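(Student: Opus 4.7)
The plan is to identify the space $\mathcal{\bm H}(D)$ with the first relative de Rham cohomology of the pair $(D, \Gamma\cap\partial D)$, and then to invoke the de Rham isomorphism to express the dimension in terms of the relative Betti number.

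As a first reduction, I would argue that $\dim\mathcal{\bm H}(D)$ does not depend on the particular coefficient $\kappa$: replacing $\kappa$ by $I$ changes only the inner product used to select the orthogonal complement of $\nabla H^{1}_{\Gamma}(D)$ inside the curl-free subspace of $\bm H_{\Gamma}(\text{curl};D)$. Since the gradient of $H^1_\Gamma$ functions already carries the full ``exact'' part of the de Rham complex, the $\kappa$-weighted and the unweighted harmonic projections have the same rank. Hence I may work with $\kappa\equiv I$, in which case $\mathcal{\bm H}(D)$ is the classical space of $L^2$-harmonic 1-forms on $D$ satisfying ${\bm n}\times{\bm v}={\bm 0}$ on $\Gamma\cap\partial D$ and, via the orthogonality to $\nabla H^{1}_{\Gamma}(D)$, the natural condition $\nabla\cdot{\bm v}=0$ in $D$ together with ${\bm n}\cdot{\bm v}=0$ on $\partial D\setminus\Gamma$. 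These are precisely the mixed ``Dirichlet/Neumann'' boundary conditions associated with the de Rham complex in \cref{eq:continuous_deRham_complex}.

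Next, I would appeal to the Hodge--Morrey--Friedrichs decomposition for Lipschitz domains with \emph{partial} boundary conditions (this is the content of the cited reference~\cite{gol2011hodge}): any ${\bm v}\in{\bm L}^2(D)$ can be orthogonally decomposed into a gradient of an $H^1_\Gamma$ function, a curl of a field in ${\bm H}(\text{curl};D)$ with vanishing tangential trace on $\partial D\setminus\Gamma$, and an element of $\mathcal{\bm H}(D)$. A standard diagram chase then shows that $\mathcal{\bm H}(D)$ is isomorphic to the first cohomology group of \cref{eq:continuous_deRham_complex}, i.e.\ the first relative de Rham cohomology $H^{1}_{\mathrm{dR}}(D,\Gamma\cap\partial D)$. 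The relative de Rham theorem for Lipschitz pairs (again a consequence of the results in~\cite{gol2011hodge}) identifies this with the first relative singular cohomology $H^{1}(D,\Gamma\cap\partial D;\mathbb{R})$. Taking dimensions and using the universal coefficient theorem to pass to the corresponding homology group yields $\dim\mathcal{\bm H}(D)=b_{1}(D,\Gamma\cap\partial D)$.

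The delicate step is the last one: establishing the Hodge decomposition and the de Rham isomorphism in the mixed-boundary-condition setting for merely Lipschitz $D$ with only $L^2$-regular differential forms. This is precisely what is proved in~\cite[Theorem~5.3]{gol2011hodge}, so in the write-up I would cite that result directly rather than reproducing its proof. Everything else in the argument is either the definition of $\mathcal{\bm H}(D)$, a $\kappa$-independence observation, or a standard homological manipulation.
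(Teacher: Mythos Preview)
Your approach is correct and essentially identical to the paper's: the paper simply cites \cite[Theorem~5.3]{gol2011hodge} as the source of the result, without further argument. Your write-up adds the $\kappa$-independence reduction (which the paper mentions separately, via \cite[section~6.1]{arnold2010finite}, in the discrete setting) and spells out the chain ``harmonic forms $\to$ relative de Rham cohomology $\to$ relative singular cohomology $\to$ relative Betti number,'' but the endpoint is the same citation.
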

\begin{remark}
If $\Gamma\cap \partial D = \emptyset$, then $b_{1}(D, \Gamma\cap \partial D)$ reduces to the classical first Betti number of $D$, which is the number of holes (or `tunnels' in 3D) through the domain. In general, it depends on topological properties of $D$ and $\Gamma\cap \partial D$. For example, if $D = (0,1)^{3}$ and $\Gamma\cap \partial D$ has $M$ connected components, then $b_{1}(D, \Gamma\cap \partial D) = M-1$. We refer to \cite[Propositions 3.8 and 3.13]{amrouche2021vector} for its value for more complicated domains.    
\end{remark}

%we note that the domain $\widetilde{D}^{+}$ satisfies the assumption in \cite[section 3.1]{licht2019smoothed}
% We define the following local finite element spaces:
% \begin{equation}\label{eq:local_FE_spaces}
% \begin{split}
%  {\bm Q}_{h}(\omega_j) &= \big\{{\bm v} |_{\omega_j} : {\bm v} \in {\bm Q}_{h}\big\}, \\
% {\bm Q}_{h,0}(\omega_j) &= \big\{{\bm v}\in {\bm Q}_{h}(\omega_j): {\bm n}\times {\bm v} = {\bm 0}\;\;\text{on}\;\;\partial \omega_j\big\}. 
% \end{split}
% \end{equation}
% A key ingredient of the GFEM is a partition of unity, which is used to glue local approximation spaces together. 

\section{MS-GFEM: model reduction and preconditioning}\label{sec-3}
\subsection{Discrete MS-GFEM}
In this subsection, we present the discrete MS-GFEM as a model reduction method for the discrete problem \cref{eq:galerkin_discretization} following \cite{ma2022error,ma2025unified}. We first briefly describe the construction of a "discrete GFEM" in the finite element setting. Let $\{\omega_i\}_{i=1}^{M}$ be an overlapping decomposition of $\Omega$ with each subdomain $\omega_i$ consisting of a union of mesh elements in $\mathcal{T}_{h}$. We also introduce a partition of unity $\{\chi_i\}_{i=1}^{M}$ subordinate to $\{\omega_i\}_{i=1}^{M}$ that satisfies for all $i$, $0\leq \chi_i\leq 1$, ${\rm supp}\,\chi_i\subset \overline{\omega_{i}}$, $\chi_i\in W^{1,\infty} (\omega_i)$, and $\sum_{i=1}^{M}\chi_i = 1$. For each $i=1,\cdots,M$, let $\Xi_{h,i}: {\bm Q}_{h,\Gamma}(\omega_i)\rightarrow {\bm Q}_{h,0}(\omega_i)$ be the operator defined by
\begin{equation}\label{eq:discrete_pou}
 \Xi_{h,i}({\bm v}_h) = \Pi_{h}(\chi_i {\bm v}_h),   
\end{equation}
where $\Pi_{h}$ denotes the N\'{e}d\'{e}lec interpolation operator, and the spaces ${\bm Q}_{h,\Gamma}(\omega_i)$ and ${\bm Q}_{h,0}(\omega_i)$ are defined following \cref{{eq:local_spaces}}. Note that the interpolant $\Pi_{h}(\chi_i {\bm v}_h)$ is well defined. The operators $\{\Xi_{h,i}\}_{i=1}^{M}$ form a \textit{discrete} partition of unity, in the sense that
\[
\sum_{i=1}^{M}R_{i}^{\top}\Xi_{h,i}({\bm v}_h|_{\omega_i}) = {\bm v}_h\quad \forall {\bm v}_h\in {\bm Q}_{h,0}(\Omega),
\]
where $R_i^\top : {\bm Q}_{h,0}(\omega_i) \rightarrow {\bm Q}_{h,0}(\Omega)$ denotes the zero extension. 

For each subdomain $\omega_i$, let a local particular function ${\bm u}^{p}_{i}\in {\bm Q}_{h,0}(\omega_i)$ and a local approximation space ${\bm S}_{n_i}(\omega_i)\subset  {\bm Q}_{h,\Gamma}(\omega_i)$ of dimension $n_i$ be given. The global particular function and global approximation space of GFEM are defined as
\begin{equation}
{\bm u}^{p} := \sum_{i=1}^{M} R_i^\top\Xi_{h,i} ({\bm u}^{p}_{i}),\quad  {\bm S}_{n}(\Omega) := {\rm span}\Big\{\sum_{i=1}^{M} R_i^\top\Xi_{h,i} ({\bm v}_{i}): {\bm v}_{i}\in  {\bm S}_{n_i}(\omega_i)\Big\}. 
\end{equation}
By construction, we have ${\bm u}^{p}\in {\bm Q}_{h,0}(\Omega)$ and ${\bm S}_{n}(\Omega)\subset {\bm Q}_{h,0}(\Omega)$. The GFEM approximation of the fine-scale solution ${\bm u}_h$ of \cref{eq:galerkin_discretization} is defined by finding ${\bm u}^{G} = {\bm u}^{p} + {\bm u}^{s}$, with ${\bm u}^{s}\in  {\bm S}_{n}(\Omega)$, such that
\begin{equation}
a({\bm u}^{G}, {\bm v}) =  \langle {\bm f},  {\bm v} \rangle_{\Omega}\quad \text{for all}\;\;{\bm v}\in {\bm S}_{n}(\Omega).    
\end{equation}

In the following, we construct the local particular functions ${\bm u}^{p}_{i}$ and the local approximation spaces $ {\bm S}_{n_i}(\omega_i)$ within the framework of MS-GFEM. To do this, we introduce an oversampling domain $\omega_i^{\ast}\supset \omega_i$ for each subdomain $\omega_i$, formed by adding a few layers of adjoining fine mesh elements to $\omega_i$. 
 %Moreover, we let ${\bm Q}_{h}(\omega^{\ast}_i)$ and ${\bm Q}_{h,0}(\omega^{\ast}_i)$ be defined following \cref{eq:local_FE_spaces}.
% Moreover, we define
% \begin{equation}\label{coloring-constant}
% \xi:= \max_{x \in \Omega}{\left( \text{card}\{i \ | \ x \in \omega_i \} \right)}, \quad \text{and}\quad \xi^{\ast}:= \max_{x \in \Omega}{\left( \text{card}\{i \ | \ x \in \omega^{\ast}_i \} \right)}. 
% \end{equation}
% Namely, $\xi$ and $\xi^{\ast}$ denote the maximum numbers of subdomains $\{\omega_i\}_{i=1}^{M}$ and oversampling domains $\{\omega^{\ast}_i\}_{i=1}^{M}$ that overlap at any given point, respectively. An important property of ${\bm W}_{h}(\omega_i^{\ast})$ is that functions in ${\bm W}_{h}(\omega_i^{\ast})$ are discrete divergence-free, i.e., $(\kappa {\bm v}_h, \nabla p_h) = 0$ for all $p_{h}\in S_{h,0}(\omega_i^{\ast})$. 
On each $\omega^{\ast}_i$, we consider the following local variational problem: Find ${\bm \phi}_{h,i}\in {\bm Q}_{h,0}(\omega^{\ast}_i)$ such that 
\begin{equation}\label{eq:local_BVP}
a_{\omega_i^{\ast}}({\bm \phi}_{h,i}, {\bm v}_h) =  \langle {\bm f}, \widetilde{R}_{i}^\top{\bm v}_h \rangle_{\Omega}\quad \text{for all}\;\;{\bm v}_h\in {\bm Q}_{h,0}(\omega_i^{\ast}),
\end{equation}
where $\widetilde{R}_{i}^\top : {\bm Q}_{h,0}(\omega^{\ast}_i) \rightarrow {\bm Q}_{h,0}(\Omega)$ denotes the zero extension. By \cref{ass:coefficients}, problem \cref{eq:local_BVP} is uniquely solvable. Combining \cref{eq:galerkin_discretization,eq:local_BVP}, we see that ${\bm u}_{h}|_{\omega_i^{\ast}} - {\bm \phi}_{h,i}$ lies in the following discrete harmonic space:
\begin{equation}\label{eq:harmonic-space}
{\bm W}_{h}(\omega_i^{\ast}) := \big\{{\bm v}_{h}\in {\bm Q}_{h,\Gamma}(\omega^{\ast}_i): a_{\omega_i^{\ast}} ({\bm v}_{h}, {\bm w}_{h}) =0 \quad\text{for all}\;\; {\bm w}_{h}\in {\bm Q}_{h,0}(\omega_i^{\ast}) \big\}.    
\end{equation}
Now we want to approximate ${\bm u}_{h}|_{\omega_i^{\ast}} - {\bm \phi}_{h,i}$ in $\omega_i$. Here a key observation is that ${\bm W}_{h}(\omega_i^{\ast})|_{\omega_i}$ is essentially low-dimensional which will be made clear. To identify its low-dimensional approximation, we use the theory of the Kolmogorov $n$-width \cite{pinkus1985n}. Let $P_{i}: \big({\bm W}_{h}(\omega_i^{\ast}), \Vert\cdot\Vert_{a,\omega_i^{\ast}}\big) \rightarrow \big({\bm Q}_{h,0}(\omega_i),\Vert\cdot\Vert_{a,\omega_i}\big)$ be the operator defined by
\begin{equation}
P_{i} {\bm v}_{h} = \Xi_{h,i}({\bm v}_{h}|_{\omega_i}). 
\end{equation}
We consider the following best approximation problem associated with $P_i$:
\begin{equation}\label{eq:n-width}
    d_{n}(P_i) := \inf_{{\bm X}(n)\subset {\bm Q}_{h,0}(\omega_i)} \sup_{{\bm z}_{h}\in {\bm W}_{h}(\omega_i^{\ast}) }\inf_{{\bm v}_{h}\in {\bm X}(n)} \frac{\big\Vert P_{i} {\bm z}_h - {\bm v}_{h}   \big\Vert_{a,\omega_i}} {\Vert {\bm z}_h\Vert_{a,\omega_i^{\ast}}}.
\end{equation}
 The quantity $d_{n}(P_i)$ is known as the Kolmogorov $n$-width of operator $P_{i}$; see \cite[Chapter 2, Section 7]{pinkus1985n}. The $n$-width $d_{n}(P_i)$ can be characterized by means of the singular value decomposition (SVD) of operator $P_i$. Let $P^{\ast}_{i}$ denote the adjoint of $P_{i}$. We consider the following eigenvalue problem: Find $\lambda \in \mathbb{R}$, ${\bm \psi}_{h}\in {\bm W}_{h}(\omega_i^{\ast})$ such that 
\begin{equation}\label{eq:eigenproblem_operator}
    P^{\ast}_{i}P_{i} {\bm \psi}_{h} = \lambda {\bm \psi}_{h},
\end{equation}
% where $P^{\ast}_{i}$ denote the adjoint of $P_{i}$. The eigenproblem \cref{eq:eigenproblem_operator} has the following equivalent variational formulation:
which can be written in variational form as
\begin{equation}\label{eq:eigenproblem_variational}
    a_{\omega_i}\big(\Xi_{h,i}({\bm \psi}_{h}|_{\omega_i}),\,\Xi_{h,i}({\bm v}_{h}|_{\omega_i}) \big)  = \lambda a_{\omega^{\ast}_i}({\bm \psi}_{h},{\bm v}_{h})\quad \forall {\bm v}_{h}\in {\bm W}_{h}(\omega_i^{\ast}).
\end{equation}
We have the following characterization of $d_{n}(P_i)$ \cite[Theorem 2.5]{pinkus1985n}.
\begin{lemma}
For each $j=1,\cdots,{\rm dim}\,{\bm W}_{h}(\omega_i^{\ast})$, let $(\lambda^{i}_{j},{\bm \psi}^{i}_{h,j})\in \mathbb{R}\times {\bm W}_{h}(\omega_i^{\ast})$ be the $j$-th eigenpair (with eigenvalues sorted in decreasing order) of problem \cref{eq:eigenproblem_variational}. Then, $d_{n}(P_i) = \sqrt{\lambda^{i}_{n+1}}$, and the optimal approximation space is given by ${\bm X}^{\rm opt}(n):={\rm span}\big\{P_{i}{\bm \psi}^{i}_{h,1},\cdots, P_{i}{\bm \psi}^{i}_{h,n} \big\} $.   
\end{lemma}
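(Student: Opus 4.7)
The plan is to invoke the classical singular value decomposition (SVD) characterization of the Kolmogorov $n$-width of a bounded operator between Hilbert spaces, as recorded in \cite[Chapter 2, Theorem 2.2 and Theorem 2.5]{pinkus1985n}. The operator $P_{i}$ maps the finite-dimensional Hilbert space $\big({\bm W}_{h}(\omega_i^{\ast}),a_{\omega_i^{\ast}}(\cdot,\cdot)\big)$ into $\big({\bm Q}_{h,0}(\omega_i),a_{\omega_i}(\cdot,\cdot)\big)$, both of which are genuine inner-product spaces by \cref{ass:coefficients}. Since everything is finite-dimensional, $P_{i}$ is automatically bounded and compact, so the general $n$-width theory applies without any analytic technicalities.

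First I would identify the adjoint $P_i^{\ast}:{\bm Q}_{h,0}(\omega_i)\to {\bm W}_{h}(\omega_i^{\ast})$ through the defining relation
\[
a_{\omega_i^{\ast}}(P_i^{\ast}{\bm w}_h,{\bm z}_h) \;=\; a_{\omega_i}({\bm w}_h, P_i{\bm z}_h) \quad \forall\, {\bm w}_h\in {\bm Q}_{h,0}(\omega_i),\; {\bm z}_h\in {\bm W}_{h}(\omega_i^{\ast}),
\]
and observe that $P_i^{\ast}P_i$ is self-adjoint and positive semi-definite on $\big({\bm W}_{h}(\omega_i^{\ast}),a_{\omega_i^{\ast}}\big)$. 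Next I would verify that the operator eigenvalue problem \cref{eq:eigenproblem_operator} is equivalent to the variational form \cref{eq:eigenproblem_variational}: pairing \cref{eq:eigenproblem_operator} with an arbitrary ${\bm v}_h\in {\bm W}_{h}(\omega_i^{\ast})$ in the $a_{\omega_i^{\ast}}$ inner product and using the adjoint relation gives $a_{\omega_i}(P_i{\bm \psi}_h,P_i{\bm v}_h)=\lambda\, a_{\omega_i^{\ast}}({\bm \psi}_h,{\bm v}_h)$, which by the definition of $P_i$ is exactly \cref{eq:eigenproblem_variational}.

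Finally I would apply the SVD characterization (the Eckart--Young--Mirsky theorem in the form of \cite[Theorem 2.5]{pinkus1985n}): the eigenvalues $\lambda_j^i$ of $P_i^{\ast}P_i$, ordered decreasingly, are the squares of the singular values of $P_i$; the optimal $n$-dimensional subspace in \cref{eq:n-width} is spanned by the images $P_i{\bm \psi}_{h,j}^i$ for $j=1,\dots,n$; and the infimum defining $d_n(P_i)$ is attained with value $\sqrt{\lambda_{n+1}^i}$. There is no real obstacle here, since the statement is a direct specialization of Pinkus's general result; the only point requiring care is bookkeeping of inner products — the domain of $P_i$ carries $a_{\omega_i^{\ast}}$ while the range carries $a_{\omega_i}$ — and the observation that $P_i^{\ast}P_i$ preserves the subspace ${\bm W}_{h}(\omega_i^{\ast})$, which is immediate because its domain and range are both this space.
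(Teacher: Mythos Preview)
Your proposal is correct and matches the paper's approach: the paper itself offers no proof beyond citing \cite[Theorem 2.5]{pinkus1985n}, which is exactly the SVD characterization you invoke. Your added remarks on the adjoint relation and the equivalence of \cref{eq:eigenproblem_operator} and \cref{eq:eigenproblem_variational} are useful elaborations but not a different argument.
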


Now we can define the local particular functions ${\bm u}^{p}_{i}$ and local approximation spaces $ {\bm S}_{n_i}(\omega_i)$ for MS-GFEM.
\begin{lemma}\label{lem:local_approximation_error}
On each subdomain $\omega_i$, let the local particular function ${\bm u}^{p}_{i}$ and local approximation space $ {\bm S}_{n_i}(\omega_i)$ be defined as
    \begin{equation}\label{eq:local_approximations}
{\bm u}^{p}_{i} := {\bm \phi}_{h,i}|_{\omega_i} ,\quad {\bm S}_{n_i}(\omega_i) := {\rm span}\{{\bm \psi}^{i}_{h,1}|_{\omega_i},\cdots, {\bm \psi}^{i}_{h,n_i}|_{\omega_i} \},  
\end{equation}
where ${\bm \phi}_{h,i}$ and $\{{\bm \psi}^{i}_{h,j}\}$ are the solution of problem \cref{eq:local_BVP} and eigenfunctions of problem \cref{eq:eigenproblem_variational}, respectively. Then, 
\begin{equation}\label{eq:local_estimate}
    \inf_{{\bm v}\in {\bm u}^{p}_{i} + {\bm S}_{n_i}(\omega_i)} \big\Vert \Xi_i({\bm u}_h|_{\omega_i} - {\bm v}) \big\Vert_{a,\omega_i}\leq d_{n_i+1}(P_i) \Vert {\bm u}_{h}\Vert_{a,\omega_i^{\ast}}, 
\end{equation}
where ${\bm u}_h$ is the solution of \cref{eq:galerkin_discretization}, and $d_{n_i+1}(P_i)$ is the $n$-width defined in \cref{eq:n-width}.
\end{lemma}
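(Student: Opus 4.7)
The plan is to reduce the inequality to a direct application of the Kolmogorov $n$-width machinery, after identifying the natural discrete harmonic element to which it applies. First I would introduce ${\bm z}_h := {\bm u}_h|_{\omega_i^\ast} - {\bm \phi}_{h,i}$ and verify that ${\bm z}_h \in {\bm W}_h(\omega_i^\ast)$. For any ${\bm v}_h \in {\bm Q}_{h,0}(\omega_i^\ast)$, \cref{eq:galerkin_discretization} gives $\langle {\bm f}, \widetilde R_i^\top {\bm v}_h \rangle_\Omega = a_{\omega_i^\ast}({\bm u}_h, {\bm v}_h)$, so subtracting \cref{eq:local_BVP} yields $a_{\omega_i^\ast}({\bm z}_h, {\bm v}_h) = 0$, which is precisely the defining property of ${\bm W}_h(\omega_i^\ast)$ in \cref{eq:harmonic-space}. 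As a byproduct, ${\bm \phi}_{h,i}$ is the $a_{\omega_i^\ast}$-orthogonal projection of ${\bm u}_h|_{\omega_i^\ast}$ onto ${\bm Q}_{h,0}(\omega_i^\ast)$, so Pythagoras yields $\Vert {\bm z}_h \Vert_{a,\omega_i^\ast} \leq \Vert {\bm u}_h \Vert_{a,\omega_i^\ast}$, which will take care of the norm on the right-hand side of \cref{eq:local_estimate}.

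Next I would rewrite the affine infimum on the left-hand side as a linear best-approximation problem for $P_i$. Writing a generic ${\bm v} \in {\bm u}_i^p + {\bm S}_{n_i}(\omega_i)$ as ${\bm v} = {\bm \phi}_{h,i}|_{\omega_i} + \sum_{j=1}^{n_i} c_j\, {\bm \psi}_{h,j}^i|_{\omega_i}$ and using linearity of $\Xi_{h,i}$ together with the definition of $P_i$,
\begin{equation*}
\Xi_{h,i}({\bm u}_h|_{\omega_i} - {\bm v}) \;=\; P_i {\bm z}_h \;-\; \sum_{j=1}^{n_i} c_j\, P_i {\bm \psi}_{h,j}^i,
\end{equation*}
so the left-hand side of \cref{eq:local_estimate} coincides with $\inf_{{\bm w} \in {\bm X}^{\rm opt}(n_i)} \Vert P_i {\bm z}_h - {\bm w} \Vert_{a,\omega_i}$, where ${\bm X}^{\rm opt}(n_i)$ is the SVD-optimal subspace identified in the preceding lemma.

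The proof would then conclude by invoking the Kolmogorov $n$-width characterization: since ${\bm X}^{\rm opt}(n_i)$ attains the outer infimum in \cref{eq:n-width} and ${\bm z}_h$ is an admissible element of ${\bm W}_h(\omega_i^\ast)$, evaluating the resulting supremum at ${\bm z}_h$ gives
\begin{equation*}
\inf_{{\bm w} \in {\bm X}^{\rm opt}(n_i)} \Vert P_i {\bm z}_h - {\bm w} \Vert_{a,\omega_i} \;\leq\; d_{n_i+1}(P_i)\, \Vert {\bm z}_h \Vert_{a,\omega_i^\ast},
\end{equation*}
in the indexing used in the statement. Combining this with $\Vert {\bm z}_h \Vert_{a,\omega_i^\ast} \leq \Vert {\bm u}_h \Vert_{a,\omega_i^\ast}$ closes the argument.

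I do not anticipate any serious obstacle. The only conceptual point is recognizing that the particular-plus-spectral decomposition ${\bm u}_i^p + {\bm S}_{n_i}(\omega_i)$ is \emph{exactly} engineered so that, after applying $\Xi_{h,i}$, the residual becomes $P_i {\bm z}_h$ minus an arbitrary element of the SVD-optimal subspace for $P_i$; once that identification is in place, the bound is immediate from the definition of $d_{n_i+1}(P_i)$ together with the Galerkin energy estimate on ${\bm z}_h$.
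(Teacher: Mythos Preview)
Your proposal is correct and follows essentially the same route as the paper's proof, which simply invokes the definition of the $n$-width, the membership ${\bm u}_h|_{\omega_i^\ast}-{\bm \phi}_{h,i}\in{\bm W}_h(\omega_i^\ast)$, and the energy bound $\Vert {\bm u}_h|_{\omega_i^\ast}-{\bm \phi}_{h,i}\Vert_{a,\omega_i^\ast}\leq \Vert {\bm u}_h\Vert_{a,\omega_i^\ast}$. You have merely spelled out the details (the Galerkin orthogonality giving the projection property, and the explicit identification of the affine infimum with the best approximation from ${\bm X}^{\rm opt}(n_i)$) that the paper leaves implicit.
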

\begin{proof}
Estimate \cref{eq:local_estimate} follows from the definition of $n$-width, the fact that ${\bm u}_{h}|_{\omega_i^{\ast}} - {\bm \phi}_{h,i}\in {\bm W}_h(\omega_i^{\ast})$, and the estimate $\Vert {\bm u}_{h}|_{\omega_i^{\ast}} - {\bm \phi}_{h,i}\Vert_{a,\omega_i^{\ast}} \leq  \Vert {\bm u}_{h}\Vert_{a,\omega_i^{\ast}}$.   
\end{proof}

Before giving the global error estimate for MS-GFEM, we define
\begin{equation}\label{coloring-constant}
k_{0}:= \max_{x \in \Omega}{\left( \text{card}\{i \ | \ x \in \omega_i \} \right)}, \quad \text{and}\quad k_{0}^{\ast}:= \max_{x \in \Omega}{\left( \text{card}\{i \ | \ x \in \omega^{\ast}_i \} \right)},
\end{equation}
i.e., $k_0$ ($k_{0}^{\ast}$) is the maximum number of subdomains (resp. oversampling domains) that can overlap at any given point. The following theorem shows that the global error of MS-GFEM is essentially bounded by the maximum local approximation error. It can be proved by combining C\'{e}a's Lemma, \cref{lem:local_approximation_error}, and a general approximation theorem of GFEM; see \cite[Theorem 3.20]{ma2025unified} for its proof in an abstract setting. 
\begin{theorem}\label{thm:global_error_bound}
Let ${\bm u}_h$ be the solution of \cref{eq:galerkin_discretization}, and ${\bm u}^{G}$ be the GFEM approximation with the local particular functions and local approximation spaces defined in \cref{eq:local_approximations}. Then,
\begin{equation}
\Vert {\bm u}_h -   {\bm u}^{G}\Vert_{a} \leq \Lambda \Vert    {\bm u}_h \Vert_{a},
\end{equation}
where 
\begin{equation}\label{error_bound}
    \Lambda:= \Big( k_{0}k_{0}^{\ast} \max_{1\leq i\leq M}\lambda^{i}_{n_i+1}\Big)^{1/2} = \sqrt{k_{0}k_{0}^{\ast}} \max_{1\leq i\leq M}d_{n_i+1}(P_i)
\end{equation}
denotes the global error bound of MS-GFEM, with the $n$-width $d_{n}(P_i)$ defined in \cref{eq:n-width}.
\end{theorem}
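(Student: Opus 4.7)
The plan is to combine the Galerkin orthogonality enjoyed by MS-GFEM with an explicit quasi-optimal trial function assembled from the local near-best approximations furnished by \cref{lem:local_approximation_error}. Since $a(\cdot,\cdot)$ is symmetric and coercive and $\bm{u}^p + \bm{S}_n(\Omega)$ is a conforming affine subspace of $\bm{Q}_{h,0}(\Omega)$, C\'ea's Lemma gives
\begin{equation*}
\|\bm{u}_h - \bm{u}^{G}\|_a = \inf_{\bm{v}\in \bm{u}^p + \bm{S}_n(\Omega)} \|\bm{u}_h - \bm{v}\|_a,
\end{equation*}
so it suffices to exhibit one admissible $\bm{v}^\ast$ meeting the claimed bound.

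For each $i$, I invoke \cref{lem:local_approximation_error} to choose $\bm{v}_i^\ast \in \bm{u}_i^p + \bm{S}_{n_i}(\omega_i)$ satisfying
\begin{equation*}
\|\Xi_{h,i}(\bm{u}_h|_{\omega_i} - \bm{v}_i^\ast)\|_{a,\omega_i} \leq \sqrt{\lambda_{n_i+1}^i}\,\|\bm{u}_h\|_{a,\omega_i^\ast},
\end{equation*}
and assemble $\bm{v}^\ast := \sum_{i=1}^M R_i^\top \Xi_{h,i}(\bm{v}_i^\ast)$, which lies in $\bm{u}^p + \bm{S}_n(\Omega)$ by the definitions of $\bm{u}^p$ and $\bm{S}_n(\Omega)$. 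Using the discrete partition-of-unity identity $\sum_i R_i^\top\Xi_{h,i}(\bm{u}_h|_{\omega_i}) = \bm{u}_h$, the global error decomposes as
\begin{equation*}
\bm{u}_h - \bm{v}^\ast = \sum_{i=1}^M R_i^\top\Xi_{h,i}(\bm{u}_h|_{\omega_i} - \bm{v}_i^\ast),
\end{equation*}
a sum of terms supported in $\overline{\omega_i}$, respectively.

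The final step is a pointwise Cauchy--Schwarz argument calibrated by the two coloring constants. Since at each point of $\Omega$ at most $k_0$ of the summands above are nonzero, the pointwise bound $|\sum_i \bm{w}_i(x)|^2 \leq k_0 \sum_i |\bm{w}_i(x)|^2$ applied both to $\bm{w}_i := R_i^\top\Xi_{h,i}(\bm{u}_h|_{\omega_i} - \bm{v}_i^\ast)$ and to its curl, together with the nonnegativity of $\kappa$ and $\nu$, integrates to
\begin{equation*}
\|\bm{u}_h - \bm{v}^\ast\|_a^2 \leq k_0 \sum_{i=1}^M \|\Xi_{h,i}(\bm{u}_h|_{\omega_i} - \bm{v}_i^\ast)\|_{a,\omega_i}^2.
\end{equation*}
Inserting the local bound, pulling out the maximum, and then applying the analogous oversampling identity $\sum_i \|\bm{u}_h\|_{a,\omega_i^\ast}^2 \leq k_0^\ast \|\bm{u}_h\|_a^2$---itself obtained by swapping sum and integral in \cref{coloring-constant}---closes the chain and produces exactly the factor $\Lambda^2 = k_0 k_0^\ast \max_i \lambda_{n_i+1}^i$.

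The genuinely analytic content---controlling local oscillations by the Kolmogorov $n$-width---is already packaged in \cref{lem:local_approximation_error}, so the argument above is essentially an assembly step with no real obstacle. The main point to keep straight is which patch supports which term: the factor $k_0$ arises from the supports of the local pieces $R_i^\top\Xi_{h,i}(\cdots)$ on the subdomains $\omega_i$, while $k_0^\ast$ arises when the right-hand side $\|\bm{u}_h\|_{a,\omega_i^\ast}$, carried by the oversampling patches, is summed back into the global energy norm. Conformity ${\bm S}_n(\Omega)\subset \bm{Q}_{h,0}(\Omega)$---ensured by the fact that $\Xi_{h,i}$ maps into $\bm{Q}_{h,0}(\omega_i)$---is what permits the equality form of C\'ea's Lemma in the first step and is the reason a \emph{discrete} (rather than continuous) partition of unity is required.
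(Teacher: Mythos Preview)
Your proof is correct and follows essentially the same approach the paper outlines: C\'ea's Lemma reduces the problem to best approximation, \cref{lem:local_approximation_error} supplies the local approximants, and the partition-of-unity assembly together with the coloring argument (your pointwise Cauchy--Schwarz bounds with $k_0$ and $k_0^\ast$) is precisely the ``general approximation theorem of GFEM'' that the paper invokes by reference to \cite[Theorem~3.20]{ma2025unified}. You have simply written out in full what the paper delegates to that citation.
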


\subsection{Iterative MS-GFEM and two-level RAS preconditioner}
In this subsection, we present the iterative MS-GFEM algorithm following \cite{strehlow2024fast}, which yields a two-level RAS preconditioner for the linear system \cref{eq:linear_system}. Before defining the algorithm, we note that the domain decomposition $\{\omega_i\}$, the oversampling domains $\{\omega^{\ast}_i\}$, and the coarse space ${\bm S}_{n}(\Omega)$ are fixed in this subsection. Let $\pi_j: {\bm Q}_{h,0}(\Omega)\rightarrow {\bm Q}_{h,0}(\omega_j^*)$ and $\pi_H: {\bm Q}_{h,0}(\Omega)\rightarrow {\bm S}_{n}(\Omega)$ be the $a$-orthogonal projections, i.e., for any ${\bm v}\in {\bm Q}_{h,0}(\Omega)$,
\begin{equation}
 \begin{split}
a_{\omega_j^{\ast}}(\pi_j({\bm v}),{\bm w}) &= a({\bm v}, \widetilde{R}_{j}^\top {\bm w}) \quad \forall {\bm w} \in {\bm Q}_{h,0}(\omega_j^*),\\[1mm]
a(\pi_H({\bm v}),{\bm w}) &= a({\bm v}, {\bm w}) \quad \quad \;\;\forall {\bm w} \in {\bm S}_{n}(\Omega).
\end{split}   
\end{equation}
Here $\widetilde{R}_{j}^\top: {\bm Q}_{h,0}(\omega_j^{\ast})\rightarrow {\bm Q}_{h,0}(\Omega)$ denotes the zero extension. Similarly, we let $R_{j}^\top: {\bm Q}_{h,0}(\omega_j)\rightarrow {\bm Q}_{h,0}(\Omega)$ be the corresponding zero extension. Moreover, we define the operators $\widetilde{\Xi}_{j}: {\bm Q}_{h,0}(\omega_j^*)\rightarrow {\bm Q}_{h,0}(\omega_j)$ ($1\leq j\leq M$) by $\widetilde{\Xi}_{j}({\bm v}) = {\Xi}_{h,j}({\bm v}|_{\omega_j})$.
With these operators, we can define the following MS-GFEM map $G$:
    $$
     G: \; {\bm Q}_{h,0}(\Omega) \rightarrow  {\bm Q}_{h,0}(\Omega),\quad    G({\bm v}) := \sum\limits_{j=1}^M R_{j}^{\top}\widetilde{\Xi}_{j} \pi_j({\bm v}) + \pi_H \Big( {\bm v} - \sum\limits_{j=1}^M R_{j}^{\top}\widetilde{\Xi}_{j} \pi_j({\bm v}) \Big).
    $$
Note that $G({\bm v})$ is the MS-GFEM approximation for the fine-scale FE problem \cref{eq:galerkin_discretization} with the right-hand side $\langle {\bm f},  \cdot \rangle_{\Omega}: = a({\bm v},\cdot)$.     
Now we can define the iterative MS-GFEM algorithm using the operator $G$. 
\begin{definition}[Iterative MS-GFEM] \label{iteration}
Let ${\bm u}_{h}$ be the solution of problem \cref{eq:galerkin_discretization}. Given an initial guess ${\bm u}^0 \in{\bm Q}_{h,0}(\Omega)$, let the sequence $\{{\bm u}^j\}_{j \in \mathbb{N}}$ be defined by
    \begin{equation} \label{richardson-iteration-equation}
        {\bm u}^{j+1} := {\bm u}^j + G\left({\bm u}_h - {\bm u}^j \right),\quad  j=0,1\cdots.
    \end{equation}
\end{definition}
The $(j+1)$-th iterate ${\bm u}^{j+1}$ indeed corresponds to the MS-GFEM approximation with the modified boundary condition ${\bm n}\times {\bm \phi}_{h,i} = {\bm n}\times {\bm u}^{j}$ on $\partial \omega_i^{\ast}$ for the local particular functions ${\bm \phi}_{h,i}$. The following lemma shows that the iterative MS-GFEM algorithm converges at a rate of at least $\Lambda$, where $\Lambda$ is the error bound of MS-GFEM defined by \cref{error_bound}. The result is a consequence of \cref{thm:global_error_bound}; see also \cite[Proposition 4.10]{strehlow2024fast}.
\begin{lemma}
    Let ${\bm u}_{h}$ be the solution of problem \cref{eq:galerkin_discretization}, and let the iterates $\{{\bm u}^{j}\}_{j \in \mathbb{N}}$ be generated by \cref{richardson-iteration-equation}. Then, for $j=1,2,\cdots$,
    $$
        \lVert {\bm u}^{j+1} - {\bm u}_{h} \rVert_a \leq \Lambda \lVert{\bm u}^{j} - {\bm u}_{h} \rVert_{a},
    $$  
where $\Lambda$ is defined by \cref{error_bound}.     
\end{lemma}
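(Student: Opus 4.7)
The plan is to reduce the claim to \cref{thm:global_error_bound} by identifying $G$ with the MS-GFEM solution operator applied to the current error. First, I introduce the error $\bm{e}^{j} := \bm{u}_{h} - \bm{u}^{j}$ and rewrite the iteration in error form: subtracting $\bm{u}_{h}$ from both sides of \cref{richardson-iteration-equation} gives
\begin{equation*}
\bm{u}^{j+1} - \bm{u}_{h} = (\bm{u}^{j} - \bm{u}_{h}) + G(\bm{u}_{h} - \bm{u}^{j}) = -\bm{e}^{j} + G(\bm{e}^{j}),
\end{equation*}
so that $\|\bm{u}^{j+1} - \bm{u}_{h}\|_{a} = \|\bm{e}^{j} - G(\bm{e}^{j})\|_{a}$. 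The whole game is now to bound $\|\bm{e}^{j} - G(\bm{e}^{j})\|_{a}$ by $\Lambda \|\bm{e}^{j}\|_{a}$.

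Second, I would exploit the key interpretation stated right after the definition of $G$: for any $\bm{v} \in \bm{Q}_{h,0}(\Omega)$, $G(\bm{v})$ is the MS-GFEM approximation of the fine-scale Galerkin solution associated with the right-hand side $\langle \bm{f}, \cdot \rangle_{\Omega} := a(\bm{v}, \cdot)$. But by the uniqueness of the Galerkin solution in $\bm{Q}_{h,0}(\Omega)$, that exact solution is $\bm{v}$ itself. Consequently $G(\bm{v})$ is the MS-GFEM approximation of $\bm{v}$, and \cref{thm:global_error_bound} (applied with the right-hand side taken to be $a(\bm{v},\cdot)$, so that the role of $\bm{u}_{h}$ in that theorem is played by $\bm{v}$) yields the abstract \emph{a priori} estimate
\begin{equation*}
\|\bm{v} - G(\bm{v})\|_{a} \leq \Lambda \, \|\bm{v}\|_{a} \qquad \forall\, \bm{v} \in \bm{Q}_{h,0}(\Omega).
\end{equation*}
Here the crucial point is that $\Lambda$ from \cref{error_bound} depends only on the local eigenvalues $\lambda^{i}_{n_{i}+1}$ and the overlap constants $k_{0}, k_{0}^{\ast}$, all of which are fixed data of the method and do not depend on the particular right-hand side; hence the same $\Lambda$ governs the MS-GFEM error for \emph{every} source term.

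Third, I instantiate this estimate at $\bm{v} = \bm{e}^{j}$ to conclude
\begin{equation*}
\|\bm{u}^{j+1} - \bm{u}_{h}\|_{a} = \|\bm{e}^{j} - G(\bm{e}^{j})\|_{a} \leq \Lambda \, \|\bm{e}^{j}\|_{a} = \Lambda \, \|\bm{u}^{j} - \bm{u}_{h}\|_{a},
\end{equation*}
which is exactly the claimed contraction. The main (and essentially the only) obstacle is the second step: making rigorous the statement ``$G$ applied to an arbitrary discrete function is its own MS-GFEM approximation.'' This requires checking that the local Dirichlet subproblems defining the local particular functions and the $a$-orthogonal projections $\pi_{j}$, $\pi_{H}$ fit the abstract framework of \cref{thm:global_error_bound} when the source is the functional $a(\bm{v},\cdot)$; this is precisely the content of the reformulation of MS-GFEM as a Richardson solver in \cite{strehlow2024fast}, so once that identification is invoked the contraction estimate is immediate from \cref{thm:global_error_bound}.
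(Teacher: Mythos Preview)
Your proposal is correct and follows precisely the approach indicated in the paper, which simply states that the result is a consequence of \cref{thm:global_error_bound} and refers to \cite[Proposition~4.10]{strehlow2024fast}. Your argument makes this explicit: rewriting the iteration in error form, identifying $G(\bm{e}^{j})$ as the MS-GFEM approximation of $\bm{e}^{j}$ via the observation that $\bm{e}^{j}$ solves \cref{eq:galerkin_discretization} with right-hand side $a(\bm{e}^{j},\cdot)$, and then invoking \cref{thm:global_error_bound} with the (right-hand-side-independent) constant $\Lambda$.
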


Next we give the matrix representation of the iterative MS-GFEM algorithm. Let ${\bf R}_j^T$ and $\widetilde{\bf R}_j^T$ $(1\leq j\leq M)$ be the matrix representations of the zero extensions $R_{j}^{\top}$ and $\widetilde{R}_{j}^{\top}$ with respect to the chosen basis of ${\bm Q}_{h,0}(\Omega)$, respectively. Similarly, we denote by ${\bf R}^{T}_H$ and $\widetilde{\bm \Xi}_j$ the matrix representations of the natural embedding ${\bm S}_{n}(\Omega) \rightarrow {\bm Q}_{h,0}(\Omega)$ and of the operator $\widetilde{\Xi}_{j}$, respectively. Recalling the matrix ${\bf A}$ defined in \cref{eq:linear_system}, let
\begin{equation}
\widetilde{\bf A}_j := \widetilde{\bf R}_j {\bf A} \widetilde{\bf R}_j^T, \quad {\bf A}_H := {\bf R}_H {\bf A} {\bf R}_H^T.
\end{equation}
Then the MS-GFEM map $G$ can be written in matrix form as ${\bf G}= {\bf BA}$, with
\begin{equation} \label{preconditioner}
    {\bf B} := \Big(  \sum_{j=1}^{M} {\bf R}_j^T \widetilde{\bm \Xi}_j\widetilde{\bf A}_j^{-1} \widetilde{\bf R}_j \Big)
    + 
    \big({\bf R}_H^T {\bf A}_H^{-1} {\bf R}_H \big) 
    \Big( {\bf I} - {\bf A}\sum_{j=1}^{M} {\bf R}_j^T \widetilde{\bm \Xi}_j\widetilde{\bf A}_j^{-1} \widetilde{\bf R}_j\Big).
\end{equation}    
Hence, the iteration \cref{richardson-iteration-equation} can be written in matrix form as 
\begin{equation}\label{eq:mat_iterative_MSGFEM}
    {\bf u}^{j+1}:= {\bf u}^{j} + {\bf B}{\bf A}({\bf u} -  {\bf u}^{j}) =  {\bf u}^{j} + {\bf B}({\bf f} -  {\bf A}{\bf u}^{j}),
\end{equation}
where ${\bf B}$ is the desired two-level RAS preconditioner, built by adding the MS-GFEM coarse space to the one-level RAS preconditioner multiplicatively. Note that this preconditioner formally corresponds to the adapted deflation preconditioner `A-DEF2' introduced in \cite{tang2009comparison}. In practice, it is often used in conjunction with a Krylov subspace method to accelerate convergence. Since ${\bf B}$ is non-symmetric, we apply GMRES to solve the preconditioned linear system ${\bf BAu} = {\bf Bf}$. To derive the convergence rate for GMRES, we assume that the inner product $b(\cdot,\cdot)$ used in GMRES satisfies
\begin{equation*}
 b_{1} \sqrt{b({\bf v}, {\bf v})}\leq \sqrt{{\bf v}^{T}{\bf A}{\bf v}} \leq b_{2} \sqrt{b({\bf v}, {\bf v})} \quad \forall {\bf v}\in \mathbb{R}^{m}
\end{equation*}
for some $b_{1},b_{2}>0$. In particular, if we use the energy inner product in GMRES, then $b_1=b_2=1$. Let $\{{\bf u}^{j} \}_{j\in\mathbb{N}}$ be the GMRES iterates with an initial vector ${\bf u}^{0}$. The following convergence estimate was proved in \cite[section 3.2]{strehlow2024fast}.
\begin{theorem}\label{thm:gmresconvergence}
Let $\Lambda$ defined by \cref{error_bound} satisfy $\Lambda<1$. Then the norm of the residual achieved at the $j$th step of the GMRES algorithm satisfies 
    $$
        \lVert {\bf B}({\bf f} - {\bf Au}^{j}) \rVert_b \leq \rate^j \left( \frac{1+\rate}{1-\rate}\frac{b_2}{b_1}  \right) \lVert {\bf B}({\bf f} - {\bf Au}^{0}) \rVert_b.
    $$
\end{theorem}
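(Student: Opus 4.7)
The plan is to combine the residual-minimality of GMRES with the convergence of the underlying Richardson iteration, transporting estimates between the energy norm (where the MS-GFEM contraction lives) and the $b$-inner product (in which GMRES is minimizing). The first step is to extract from the preceding lemma the operator-level contraction $\|(\mathbf{I} - \mathbf{BA})\mathbf{v}\|_a \leq \Lambda \|\mathbf{v}\|_a$ for every $\mathbf{v}$: this follows by linearity from the Richardson recursion $\mathbf{u}^{j+1} - \mathbf{u} = (\mathbf{I} - \mathbf{BA})(\mathbf{u}^j - \mathbf{u})$ together with the uniform bound $\|\mathbf{u}^{j+1} - \mathbf{u}\|_a \leq \Lambda \|\mathbf{u}^j - \mathbf{u}\|_a$ valid for arbitrary initial guesses. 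Since $\Lambda < 1$, a Neumann series then gives the auxiliary bounds $\|\mathbf{BA}\|_a \leq 1+\Lambda$ and $\|(\mathbf{BA})^{-1}\|_a \leq 1/(1-\Lambda)$.

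The second step is the GMRES--Richardson comparison. Expanding the Richardson iterate as $\mathbf{u}^j_{\mathrm{R}} = \mathbf{u}^0 + \sum_{k=0}^{j-1}(\mathbf{I} - \mathbf{BA})^{k} \mathbf{r}^0$ shows that $\mathbf{u}^j_{\mathrm{R}}$ lies in the affine Krylov subspace generated by $\mathbf{BA}$ and $\mathbf{r}^0$, so $b$-minimality of GMRES yields $\|\mathbf{r}^j\|_b \leq \|\mathbf{r}^j_{\mathrm{R}}\|_b$, with $\mathbf{r}^j := \mathbf{B}(\mathbf{f} - \mathbf{Au}^j)$ and $\mathbf{r}^j_{\mathrm{R}}$ the Richardson residual. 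The key identity is that residual and error are linked via $\mathbf{r}^j_{\mathrm{R}} = \mathbf{BA}(\mathbf{u} - \mathbf{u}^j_{\mathrm{R}})$ and $\mathbf{r}^0 = \mathbf{BA}(\mathbf{u} - \mathbf{u}^0)$. Combining these with the Richardson error contraction and the auxiliary bounds from step one gives, on the one hand, $\|\mathbf{r}^j_{\mathrm{R}}\|_a \leq (1+\Lambda)\Lambda^j \|\mathbf{u} - \mathbf{u}^0\|_a$ and, on the other, $\|\mathbf{r}^0\|_a \geq (1-\Lambda)\|\mathbf{u} - \mathbf{u}^0\|_a$.

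The third step is to assemble these into the ratio and invoke the norm equivalence $b_1\|\cdot\|_b \leq \|\cdot\|_a \leq b_2\|\cdot\|_b$ at each end. Dividing the upper bound by the lower bound, the common factor $\|\mathbf{u} - \mathbf{u}^0\|_a$ cancels and one obtains $\|\mathbf{r}^j_{\mathrm{R}}\|_a/\|\mathbf{r}^0\|_a \leq \Lambda^j(1+\Lambda)/(1-\Lambda)$, which together with $\|\mathbf{r}^j_{\mathrm{R}}\|_b \leq b_1^{-1}\|\mathbf{r}^j_{\mathrm{R}}\|_a$ and $\|\mathbf{r}^0\|_b \geq b_2^{-1}\|\mathbf{r}^0\|_a$ produces the stated bound after using the GMRES inequality $\|\mathbf{r}^j\|_b \leq \|\mathbf{r}^j_{\mathrm{R}}\|_b$.

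The main delicate point is this last assembly: the geometric rate $\Lambda^j$ comes from the Richardson contraction in the energy norm; the factor $(1+\Lambda)/(1-\Lambda)$ arises because the residual is converted to an error at the final step (losing $1+\Lambda$ through $\|\mathbf{BA}\|_a$) and back from the error to the residual at the initial step (losing $1-\Lambda$ in the denominator through $\|(\mathbf{BA})^{-1}\|_a$); and the factor $b_2/b_1$ is the unavoidable cost of moving the upper-bound side from the $a$-norm to $b$ and the lower-bound side from $b$ to $a$, since the contraction is only available in the energy norm while GMRES is run with respect to $b$.
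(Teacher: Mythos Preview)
Your argument is correct and is essentially the standard proof: GMRES minimality over the Krylov subspace, comparison with the Richardson iterate, the operator bound $\|\mathbf{I}-\mathbf{BA}\|_a\le\Lambda$ extracted from the preceding lemma, and the norm equivalence between $\|\cdot\|_a$ and $\|\cdot\|_b$. The paper does not give its own proof of this theorem but simply cites \cite[Section~3.2]{strehlow2024fast}; your write-up reconstructs precisely that argument.
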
 

\Cref{thm:gmresconvergence} shows that the preconditioned GMRES converges at least as fast as the iterative MS-GFEM algorithm -- at a rate of $\Lambda$. By the definition of $\Lambda$, we can control the convergence rate by preselecting an eigenvalue tolerance \texttt{tol} and then choosing the number $n_i$ of local eigenfunctions such that $\sqrt{\lambda^{i}_{n_i+1}}\leq \texttt{tol}< \sqrt{\lambda^{i}_{n_i}}$. Importantly, the exponential decay of $\sqrt{\lambda^{i}_{j}}$ (with respect to $j$) proved in the next section guarantees that it does not need a lot of local basis functions for a reasonably fast convergence.  

%------------------------------------------------------------------------------------------------
\section{Local exponential convergence}\label{sec-4}
We have seen from \cref{thm:global_error_bound} that the global error of MS-GFEM is essentially bounded by the maximum local approximation error, which, in turn, controls the convergence rates of the iterative MS-GFEM and the preconditioned GMRES algorithms. Therefore, it is critical to derive an explicit decay rate for the local approximation errors, or equivalently, for the $n$-width $d_{n}(P_i)$ in terms of $n$. For ease of presentation, we first consider the special case where $\omega_i$ and $\omega_i^{\ast}$ are simply connected domains with $\omega_i^{\ast}\subset\subset \Omega$. More general situations will be treated in \cref{subsec:nontrivial_topology}. Our result needs two technical assumptions. For a domain $D\subset\omega_i^{\ast}$, let  
\begin{equation}
\mathcal{T}_{h}(D):=\{ K\in \mathcal{T}_h: |K\cap D|>0\}, \quad D^{+}:= {\rm int} \Big( \bigcup_{K\in \mathcal{T}_{h}(D)} \overline{K}\Big),     
\end{equation}
and we define a set of domains 
\begin{align}\label{eq:set_of_domains}
    \begin{split}
 \mathscr{E}_h(D) := \big\{  \widetilde{D}: D^{+}\subset &\widetilde{D}\subset \overline{\omega_i^{\ast}}, \;\widetilde{D}\; \text{is a union of elements in}\;\mathcal{T}_h,\\
& {\rm dist}(\partial \widetilde{D}\setminus \partial \Omega, \partial D\setminus \partial\Omega) \leq 2h\big\}.
    \end{split}
\end{align}
\begin{assumption}\label{ass:simply-connected}
For a simply connected domain $D\subset\omega_i^{\ast}$, there exists a simply connected domain $\widetilde{D}\in  \mathscr{E}_h(D)$.
\end{assumption}
Roughly speaking, \cref{ass:simply-connected} means that given a simply connected domain $D$, we can find a slightly larger, simply connected domain by adding one or two layers of adjoining elements to $D$. This is generally doable by filling the `holes'. The second assumption is on the regularity of the partition of unity $\{\chi_i\}_{i=1}^{M}$. We need a higher regularity than it has to be to estimate the norm of the operators $\Xi_{h,i}$ defined in \cref{eq:discrete_pou}.
\begin{assumption}\label{ass:POU}
For each $i=1,\cdots,M$, $\chi_i\in W^{2,\infty}(\omega_i)$, and there exist $C>0$ depending only on $d$ and $\rho_i>0$ such that $|\chi_i|_{W^{j,\infty}(\omega_i)}\leq C\rho^{-j}_i$ for $j=1,2$.    
\end{assumption}
We note that $\rho_i$ above is related to the size of overlap.

The following theorem is the main result of this paper that gives an exponential decay estimate for the $n$-width $d_{n}(P_i)$. For brevity, we omit the subdomain index, and denote by $\Vert \Xi_{h}\Vert$ the norm of the operator $\Xi_{h}: ({\bm Q}_{h,\Gamma}(\omega),\,\Vert\cdot\Vert_{a,\omega})\rightarrow ({\bm Q}_{h,0}(\omega),\,\Vert\cdot\Vert_{a,\omega})$.
% \begin{theorem}\label{thm:exponential_boxes}
% Let $\omega_i\subset \omega_i^{\ast}\subset\subset \Omega$ be concentric boxes with ${\rm dist}(\omega_i,\omega_i^{\ast})>0$. Then, there exists $n_0>0$ and $b>0$, such that when $n>n_0$ and $h\leq h_0$, 
% \begin{equation*}
%   d_{n}(P_i)\leq e^{-bn^{1/(d+1)}}.  
% \end{equation*}
% \end{theorem}

\begin{theorem}\label{thm:exponential_convergence}
Let $\omega\subset \omega^{\ast}\subset\subset\Omega$ with $\delta^{\ast}:={\rm dist}(\omega,\partial \omega^{\ast})>0$ be simply connected domains, and let \cref{ass:simply-connected} be satisfied. Then, there exist $n_0: = C_1|\omega^{\ast}|/(\delta^{\ast})^{d}$ and $b:=\frac{1}{2}\big(C_{1}|\omega^{\ast}|/(\delta^{\ast})^{d}\big)^{-1/(d+1)}$, such that if $n>n_0$,
\begin{equation}\label{eq:exponential_decay_nwidth}
  d_{n}(P)\leq  \Vert \Xi_{h}\Vert \max\{\delta^{\ast}, 1/\delta^{\ast} \}\, e^{-bn^{1/(d+1)}},  
\end{equation}
where the constant $C_1$ depends only on $d$, the shape of $\Omega$, $\kappa_{\rm min}$, $\kappa_{\rm max}$, $\nu_{\rm min}$, $\nu_{\rm max}$, the polynomial degree $r$, and the shape-regularity of the mesh $\mathcal{T}_h$. Moreover, if \cref{ass:POU} holds, then
\begin{equation}
\Vert \Xi_{h}\Vert\leq C_2(1+\rho^{-1}),    
\end{equation}
where $C_2$ depends only on the coefficient bounds and the shape-regularity of $\mathcal{T}_h$.
\end{theorem}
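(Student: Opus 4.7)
The plan is to follow the abstract MS-GFEM framework of \cite{ma2025unified}, which reduces the exponential decay of $d_{n}(P)$ to verifying two local ingredients on a chain of nested intermediate domains between $\omega$ and $\omega^{\ast}$: (i) a \emph{discrete Caccioppoli inequality} for vector fields in the discrete harmonic space $\bm{W}_{h}(\omega^{\ast})$, controlling the energy on a smaller set by the energy on a slightly larger set with constant proportional to $1/\delta$, and (ii) a \emph{weak approximation property} asserting that, on any intermediate subdomain, the restriction of $\bm{W}_{h}(\omega^{\ast})$ can be approximated in the energy norm by a subspace of dimension $N \sim |\omega^{\ast}|/\delta^{d}$ up to a small factor (e.g., a fraction of the energy). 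Given these two properties, one chooses $\sim n^{1/(d+1)}$ equidistant concentric intermediate shells of width $\sim \delta^{\ast} n^{-1/(d+1)}$ between $\omega$ and $\omega^{\ast}$, applies Caccioppoli and weak approximation alternately at each shell, and multiplies the local contraction factors. Optimizing over the number of shells and the dimension budget per shell yields the claimed stretched-exponential rate $e^{-b n^{1/(d+1)}}$ with $n_{0}$ and $b$ of the stated form, and the factor $\max\{\delta^{\ast},1/\delta^{\ast}\}$ absorbs the units and the Caccioppoli constant.

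The main obstacle, as flagged in the paper, is the discrete Caccioppoli inequality. The natural continuous argument would test $a_{\omega^{\ast}}(\bm{v}_{h},\cdot)=0$ against $\eta^{2}\bm{v}_{h}$ for a smooth cut-off $\eta$ supported in $\omega^{\ast}$, but $\eta^{2}\bm{v}_{h}\notin \bm{Q}_{h,0}(\omega^{\ast})$, so one must instead use $\Pi_{h}(\eta^{2}\bm{v}_{h})$. This forces a super-approximation estimate for the N\'{e}d\'{e}lec interpolant of the product $\eta^{2}\bm{v}_{h}$: the error $\eta^{2}\bm{v}_{h}-\Pi_{h}(\eta^{2}\bm{v}_{h})$ must be controlled elementwise in $\bm{L}^{2}$ and in curl, in terms of $\eta$ and $\bm{v}_{h}$. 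Standard arguments for scalar elements break down because $\bm{v}_{h}|_{K}\in \bm{R}_{r}(K)$ is not a full polynomial of degree $r$, and it is precisely here that \cref{lem:equiv_norm} enters: it lets one bound $r$-th order derivatives of the N\'{e}d\'{e}lec polynomial $\bm{v}_{h}|_{K}$ by $(r-1)$-th order derivatives of its curl, which in turn allows cancellation of the top-order terms in the interpolation error. Expanding $a_{\omega^{\ast}}(\bm{v}_{h},\Pi_{h}(\eta^{2}\bm{v}_{h}))=0$, moving the super-approximation remainders to the right-hand side, and absorbing them using a Young-type inequality produces the desired Caccioppoli-type bound with constant $\sim 1+1/\delta$.

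For the weak approximation property I would apply the regular decomposition of \cref{lem:regular_decomposition} on a sub-domain $\widetilde{D}\in \mathscr{E}_{h}(D)$ provided by \cref{ass:simply-connected} (simple connectedness is used here to guarantee the decomposition and to make the first cohomology trivial, via \cref{lem:dimension_harmonic_forms,lem:Betti-numbers}): write $\bm{v}_{h}=\bm{z}+\nabla p$ with $\bm{z}\in \bm{H}^{1}_{0}(\widetilde{D})$ and $p\in H^{1}_{0}(\widetilde{D})$, each controlled in $H^{1}$ by $\|\nabla\times\bm{v}_{h}\|_{L^{2}}$ and $\|\bm{v}_{h}\|_{L^{2}}$. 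Approximate $\bm{z}$ and $\nabla p$ by coarse piecewise polynomial fields (interpolating onto a tensor grid of mesh width $\sim \delta$), then map the continuous approximants into $\bm{Q}_{h,0}(D)$ via $\Pi_{h}$, using \cref{lem:error_estimates_interpolation} to control the interpolation error. Counting the degrees of freedom of this coarse space gives $N\sim |\omega^{\ast}|/\delta^{d}$, and the residual is $O(\delta)$ times the energy of $\bm{v}_{h}$, which is exactly the weak-approximation input needed by the iteration.

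Finally, the bound on $\|\Xi_{h}\|$ is a direct computation using \cref{ass:POU} and \cref{lem:error_estimates_interpolation}. Write $\Xi_{h}(\bm{v}_{h})=\Pi_{h}(\chi\bm{v}_{h})=\chi\bm{v}_{h}-(\chi\bm{v}_{h}-\Pi_{h}(\chi\bm{v}_{h}))$, bound $\|\chi\bm{v}_{h}\|_{a,\omega}\lesssim (1+\rho^{-1})\|\bm{v}_{h}\|_{a,\omega}$ using $|\chi|_{W^{1,\infty}}\lesssim\rho^{-1}$, and control the interpolation defect elementwise by \cref{eq:error_nedelec_interpolation_1} with $s=1$, in which the seminorms $|\chi\bm{v}_{h}|_{H^{1}(K)}$ and $|\nabla\times(\chi\bm{v}_{h})|_{H^{1}(K)}$ involve $|\chi|_{W^{2,\infty}}\lesssim\rho^{-2}$; combining this with an inverse estimate on the piecewise-polynomial field $\bm{v}_{h}$ absorbs one factor of $h^{-1}$ and yields the claimed bound $C_{2}(1+\rho^{-1})$.
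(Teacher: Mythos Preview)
Your overall architecture---Caccioppoli plus weak approximation, iterated over $N\sim n^{1/(d+1)}$ nested shells---matches the paper, and your treatment of the Caccioppoli inequality (test against $\Pi_{h}(\eta^{2}\bm{v}_{h})$, use super-approximation powered by \cref{lem:equiv_norm}) and of the bound on $\|\Xi_{h}\|$ are essentially what the paper does. The genuine gap is in the weak approximation step, specifically your handling of the gradient part $\nabla p$ of the regular decomposition.

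The regular decomposition (\cref{lem:regular_decomposition}) gives $\bm{z}\in\bm{H}^{1}$ with $\|\nabla\bm{z}\|_{\bm{L}^{2}}\lesssim\|\nabla\times\bm{v}_{h}\|_{\bm{L}^{2}}$, but for $p$ only $\|\nabla p\|_{L^{2}}\lesssim\|\bm{v}_{h}\|_{L^{2}}$; there is no bound on $\|\nabla^{2}p\|$, so $\nabla p$ is merely in $\bm{L}^{2}$ and cannot be approximated to accuracy $O(\delta)$ by a coarse space of dimension $\sim|\omega^{\ast}|/\delta^{d}$ as you propose. The paper gets around this by a more elaborate four-term splitting. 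One projects $\nabla p$ and $\bm{w}$ (in the $\kappa$-weighted $L^{2}$ inner product) onto $\nabla S_{h,\Gamma}(\widetilde{D}^{+})$, obtaining $\nabla p_{h}$ and $\nabla\psi_{h}$, and then observes that because $\bm{v}_{h}\in\bm{W}_{h}(D^{\ast})$ the scalar function $p_{h}+\psi_{h}$ is \emph{discrete harmonic for the scalar problem} $(\kappa\nabla\cdot,\nabla\cdot)$ on $\widetilde{D}^{+}$. One can therefore invoke the already-established exponential $n$-width decay for scalar Poisson-type problems (from \cite{ma2025unified}), which approximates $\nabla(p_{h}+\psi_{h})$ to accuracy $\epsilon$ using a space of dimension $\sim(|\omega^{\ast}|/\delta^{d})|\log\epsilon|^{d+1}$; this is the origin of the second term in the dimension count of \cref{lem:weak_approximation}. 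The remaining pieces---$\bm{w}$ by piecewise constants, $\nabla\psi_{h}$ by stability of the projection, and $\bm{w}_{h}-\bm{w}$ by \cref{lem:error_estimates_interpolation} modulo the (here trivial) harmonic-forms space $\mathcal{\bm H}_{h}$---are straightforward. This detour through the scalar discrete-harmonic space is the missing idea; without it the gradient component cannot be compressed. Two smaller points: the paper's weak approximation is in $\bm{L}^{2}$, not in the energy norm, and is combined with Caccioppoli (energy $\lesssim\delta^{-1}\times L^{2}$ on the annulus) only afterwards in a weighted norm $\|\cdot\|_{a,D,\delta}$ to get the one-step contraction; and the regular decomposition is applied on $\Omega$ to $\eta\bm{v}_{h}$ (with a cut-off $\eta$), not locally on $\widetilde{D}$, precisely to keep the stability constants independent of the intermediate subdomain.
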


% To this end, we apply the theoretical framework established in \cite[section 3.1]{ma2023unified}. Based on this framework, the proof of exponential decay of the $n$-width $d_{n}(P_i)$ boils downs to the verification of two important properties of the discrete $a$-harmonic space: a Caccioppoli-type inequality and a weak approximation property. We state these two properties in the following two lemmas.

The proof of \cref{thm:exponential_convergence} relies on two important properties of the discrete harmonic space ${\bm W}_{h}(D)$ (defined following \cref{eq:harmonic-space} for a general domain $D$): a Caccioppoli-type inequality and a weak approximation property, which are stated in the following two lemmas. We note that the assumption on the topology of the domains is only relevant to the weak approximation property.

\begin{lemma}[Discrete Caccioppoli inequality]\label{lem:caccioppoli_inequality}
Let $D\subset D^{\ast}$ be subdomains of $\Omega$ with $\delta:={\rm dist}\big(D, \, \partial D^{\ast}\setminus\partial \Omega\big)>0$, and let $h_{K}\leq \delta/3$ for each $K\in \mathcal{T}_{h}$ with $K\cap (D^{\ast}\setminus D)\neq \emptyset$. Then, for any ${\bm v}_{h}\in {\bm W}_{h}(D^{\ast})$,
\begin{equation}
 \Vert {\bm v}_h\Vert_{a,D}\leq C_{\rm cac}\delta^{-1}\Vert {\bm v}_h\Vert_{{\bm L}^{2}(D^{\ast}\setminus D)}, 
\end{equation}
where the constant $C_{\rm cac}$ depends on $d$, the polynomial degree $r$, the coefficient bounds, and the shape-regularity of the mesh $\mathcal{T}_{h}$. 
\end{lemma}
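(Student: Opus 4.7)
The plan is to adapt the classical Caccioppoli energy argument to the discrete $\bm{H}(\mathrm{curl})$ setting, using discrete harmonicity of $\bm{v}_h$ in place of the continuous Euler--Lagrange equation and a carefully chosen N\'{e}d\'{e}lec interpolant as a discrete replacement for the usual test function $\eta^2 \bm{v}_h$. First I would construct a cutoff $\eta \in W^{2,\infty}(\mathbb{R}^d)$ with $\eta \equiv 1$ on $D$, vanishing outside a narrow neighborhood of $D$ that remains inside $D^*$ away from $\partial\Omega$, and satisfying $\|\nabla\eta\|_{\infty} \leq C\delta^{-1}$ and $\|\nabla^{2}\eta\|_{\infty} \leq C\delta^{-2}$. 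The mesh-size hypothesis $h_K \leq \delta/3$ on elements meeting $D^*\setminus D$ leaves room for such a cutoff and guarantees that $\Pi_h(\eta^2 \bm{v}_h) \in \bm{Q}_{h,0}(D^*)$: the tangential trace vanishes on $\partial\Omega\cap\partial D^*$ because $\bm{v}_h$ satisfies the boundary condition there, while on the rest of $\partial D^*$ the support condition on $\eta$ forces the interpolant to vanish.

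Using this test function, the discrete harmonicity identity $a_{D^*}(\bm{v}_h,\Pi_h(\eta^2\bm{v}_h)) = 0$ gives
\[
a_{D^*}(\bm{v}_h,\eta^2 \bm{v}_h) \;=\; a_{D^*}\bigl(\bm{v}_h,\; \eta^2 \bm{v}_h - \Pi_h(\eta^2 \bm{v}_h)\bigr).
\]
Expanding the left-hand side by the product rule $\nabla\times(\eta^2\bm{v}_h) = \eta^2\nabla\times\bm{v}_h + 2\eta\,\nabla\eta\times\bm{v}_h$, applying Young's inequality to the cross term, and using $\operatorname{supp}\nabla\eta\subset D^*\setminus D$ together with $\eta\equiv 1$ on $D$, I obtain
\[
\tfrac{1}{2}\|\bm{v}_h\|_{a,D}^{2} \;\leq\; \bigl|a_{D^*}\bigl(\bm{v}_h,\,\eta^2\bm{v}_h - \Pi_h(\eta^2\bm{v}_h)\bigr)\bigr| \;+\; C\delta^{-2}\|\bm{v}_h\|_{\bm{L}^2(D^*\setminus D)}^{2}.
\]
The heart of the proof is then an elementwise super-approximation estimate of the form
\[
h_K^{-1}\|\eta^2\bm{v}_h - \Pi_h(\eta^2\bm{v}_h)\|_{\bm{L}^2(K)} + \|\nabla\times(\eta^2\bm{v}_h - \Pi_h(\eta^2\bm{v}_h))\|_{\bm{L}^2(K)} \;\leq\; C\delta^{-1}\bigl(\|\bm{v}_h\|_{\bm{L}^2(K)} + h_K\|\nabla\times\bm{v}_h\|_{\bm{L}^2(K)}\bigr),
\]
which, combined with Cauchy--Schwarz on $a_{D^*}(\bm{v}_h,\cdot)$ and the assumption $h_K\leq\delta/3$, lets us absorb the resulting $\|\bm{v}_h\|_{a,D}$-type factor into the left-hand side and close the estimate.

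The main obstacle is proving this super-approximation bound uniformly in the polynomial degree $r$. Starting from \cref{lem:error_estimates_interpolation} with $s=r$ and expanding $\eta^2\bm{v}_h$ by the Leibniz rule, one encounters the worst-case term $\eta^2\partial^{r}\bm{v}_h$, in which every derivative falls on $\bm{v}_h$. A naive inverse inequality $|\bm{v}_h|_{\bm{H}^r(K)}\lesssim h_K^{-r}\|\bm{v}_h\|_{\bm{L}^2(K)}$ precisely cancels the $h_K^{r}$ gained from the interpolation bound and leaves no $\delta^{-1}$ scaling. This is where \cref{lem:equiv_norm} is decisive: it exchanges $r$-th-order derivatives of $\bm{v}_h|_K\in\bm{R}_r(K)$ for $(r-1)$-th-order derivatives of $\nabla\times\bm{v}_h$, which lives in a polynomial space of degree $r-1$ and therefore absorbs an inverse inequality with a gain of one order of $h_K$; the required $\delta^{-1}$ factor is then supplied by the lower-order product-rule terms involving $\nabla\eta$ and $\nabla^2\eta$. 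Without \cref{lem:equiv_norm} this strategy appears to be restricted to the lowest-order N\'{e}d\'{e}lec case, consistent with what is available in the literature for arbitrary high-order elements.
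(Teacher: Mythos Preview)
Your overall strategy---cutoff, discrete harmonicity $a_{D^\ast}(\bm v_h,\Pi_h(\eta^2\bm v_h))=0$, and a super-approximation estimate driven by \cref{lem:equiv_norm}---matches the paper's approach. However, there is a genuine gap in the absorption step, and it is tied to the precise form of your super-approximation bound.

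With your displayed inequality
\[
\tfrac12\|\bm v_h\|_{a,D}^2 \le \bigl|a_{D^\ast}(\bm v_h,\eta^2\bm v_h-\Pi_h(\eta^2\bm v_h))\bigr| + C\delta^{-2}\|\bm v_h\|_{\bm L^2(D^\ast\setminus D)}^2
\]
and your super-approximation (whose curl part, after the inverse estimate $h_K\|\nabla\times\bm v_h\|_{\bm L^2(K)}\le C\|\bm v_h\|_{\bm L^2(K)}$, reduces to $\|\nabla\times(\text{residual})\|_{\bm L^2(K)}\le C\delta^{-1}\|\bm v_h\|_{\bm L^2(K)}$), the curl piece of the right-hand side gives
\[
\nu_{\max}\sum_{K\subset D^\ast\setminus D}\|\nabla\times\bm v_h\|_{\bm L^2(K)}\cdot C\delta^{-1}\|\bm v_h\|_{\bm L^2(K)}
\;\le\;\epsilon\,\|\nabla\times\bm v_h\|_{\bm L^2(D^\ast\setminus D)}^2 + C_\epsilon\delta^{-2}\|\bm v_h\|_{\bm L^2(D^\ast\setminus D)}^2.
\]
The first term lives entirely on the annulus $D^\ast\setminus D$ and \emph{cannot} be absorbed into your left-hand side $\tfrac12\|\bm v_h\|_{a,D}^2$; applying the inverse inequality the other way only produces an $h_K^{-1}\delta^{-1}$ factor that blows up as $h\to 0$. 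So the argument does not close.

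The paper avoids this in two linked ways. First, it keeps the exact identity $\|\eta\bm v_h\|_{a,D^\ast}^2=\|\nu^{1/2}\nabla\eta\times\bm v_h\|_{\bm L^2}^2+a_{D^\ast}(\bm v_h,\eta^2\bm v_h)$, so the full quantity $\|\eta\bm v_h\|_{a,D^\ast}^2$ (not just $\|\bm v_h\|_{a,D}^2$) is available for absorption. Second---and this is the key point---the paper proves the super-approximation with $\|\nabla\times(\eta\bm v_h)\|_{\bm L^2(K)}$ on the right-hand side rather than $\|\nabla\times\bm v_h\|_{\bm L^2(K)}$; this is obtained by replacing $\eta$ with its element average $\widehat\eta$, applying \cref{lem:equiv_norm} to $\widehat\eta\,\partial^\beta\bm v_h$, and then recombining to recover $\nabla\times(\eta\bm v_h)$. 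After the inverse estimate on the factor $h_K\|\nabla\times\bm v_h\|$, the resulting term $\tfrac{\nu_{\min}}{4}\sum_K\|\nabla\times(\eta\bm v_h)\|_{\bm L^2(K)}^2\le\tfrac14\|\eta\bm v_h\|_{a,D^\ast}^2$ is absorbed cleanly. Your form of the super-approximation is a consequence of the paper's, but is strictly weaker in exactly the way that breaks the absorption.

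One smaller point: for the interpolation estimate at order $s=r$ you need $\eta$ with $|\eta|_{W^{j,\infty}}\le C\delta^{-j}$ for all $0\le j\le r+1$; the $W^{2,\infty}$ regularity you assume is only enough for $r=1$.
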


\begin{lemma}[Weak approximation property]\label{lem:weak_approximation}
Let $D\subset D^{\ast}\subset\subset \Omega$ be simply connected domains with $\delta:={\rm dist} ({D},\partial D^{\ast}) > 0$, and let \cref{ass:simply-connected} be satisfied. Suppose that $h\leq \delta/4$. Then there exists a family of subspaces ${\bm V}_{H,\epsilon}(D)$ of ${\bm W}_{h}(D)$, parameterized by $H\in (0,\delta)$ and $\epsilon\in (0,1)$, such that ${\rm dim} \, {\bm V}_{H,\epsilon}(D)\leq C_{{\rm dim}}\big(|D^{\ast}|/H^{d} + (|D^{\ast}|/\delta^{d})|\log \epsilon|^{d+1}\big)$, and that for any ${\bm u}_{h}\in {\bm W}_{h}(D^{\ast})$,
\begin{align}\label{eq:weak_approx_estimate}
\begin{split}
&\inf_{{\bm z}_{h}\in {\bm V}_{H,\epsilon}(D)} \Vert {\bm u}_{h} - {\bm z}_{h}\Vert_{{\bm L}^{2}(D)}\\
&\leq C_{\rm app} \big(H(\Vert \nabla \times {\bm u}_{h}\Vert_{{\bm L}^{2}(D^{\ast})} + \delta^{-1}\Vert {\bm u}_{h}\Vert_{{\bm L}^{2}(D^{\ast})}) + \epsilon \Vert {\bm u}_{h}\Vert_{{\bm L}^{2}(D^{\ast})}\big),    
\end{split}    
\end{align}
where the constants $C_{\rm dim}$ and $C_{\rm app}$ depend only on $d$, $\kappa_{\rm max}/\kappa_{\rm min}$, the shape-regularity of the mesh $\mathcal{T}_{h}$, and the shape of the domain $\Omega$ ($C_{\rm app}$ only). 
\end{lemma}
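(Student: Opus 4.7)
The overall strategy is to split $u_h$ via a Helmholtz-type decomposition on a simply connected intermediate domain and approximate the two pieces with complementary techniques whose costs match, respectively, the two terms $|D^{\ast}|/H^{d}$ and $(|D^{\ast}|/\delta^{d})|\log\epsilon|^{d+1}$ in the dimension bound and the two terms $H(\cdots)$ and $\epsilon(\cdots)$ in the error bound.

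Step 1 (intermediate domain and decomposition). Using \cref{ass:simply-connected} together with $h\leq \delta/4$, I would fix a simply connected $\widetilde{D}\in \mathscr{E}_{h}(D)$ with $D\subset \widetilde{D}\subset D^{\ast}$ and ${\rm dist}(\widetilde{D},\partial D^{\ast})\sim \delta$. On this simply connected Lipschitz subdomain, a variant of \cref{lem:regular_decomposition} (without the global zero boundary trace, available on simply connected domains) yields ${\bm u}_h|_{\widetilde{D}}={\bm z}+\nabla p$ with ${\bm z}\in{\bm H}^{1}(\widetilde{D})$, $p\in H^{1}(\widetilde{D})$, and the estimates
\[
\|{\bm z}\|_{{\bm H}^{1}(\widetilde{D})}\leq C\bigl(\|\nabla\times{\bm u}_h\|_{{\bm L}^{2}(\widetilde{D})}+\delta^{-1}\|{\bm u}_h\|_{{\bm L}^{2}(\widetilde{D})}\bigr),\qquad \|\nabla p\|_{{\bm L}^{2}(\widetilde{D})}\leq C\|{\bm u}_h\|_{{\bm L}^{2}(\widetilde{D})}.
\]
Imposing a $\kappa$-orthogonal gauge $(\kappa{\bm z},\nabla q)_{{\bm L}^{2}(\widetilde{D})}=0$ for all $q\in H^{1}_{0}(\widetilde{D})$---motivated by the discrete relation $(\kappa{\bm u}_h,\nabla\phi_h)_{{\bm L}^{2}(D^{\ast})}=0$ for $\phi_h\in S_{h,0}(D^{\ast})$ inherited from ${\bm u}_h\in{\bm W}_h(D^{\ast})$---forces $p$ to satisfy (up to a small residual from the continuous-vs-discrete mismatch) the scalar elliptic equation $-\nabla\cdot(\kappa\nabla p)=0$ in $\widetilde{D}$.

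Step 2 (two complementary approximations). For the regular part ${\bm z}$, I would use a Cartesian or simplicial coarse mesh of scale $H$ covering $D$: Bramble--Hilbert (or a stable quasi-interpolation) provides a space of dimension $\leq C|D^{\ast}|/H^{d}$ approximating ${\bm z}$ in ${\bm L}^{2}(D)$ with error $\leq CH\,|{\bm z}|_{{\bm H}^{1}(\widetilde{D})}$, yielding the $H$-term in both the dimension and error bounds. For the gradient part $\nabla p$, because $p$ is $\kappa$-harmonic on $\widetilde{D}$ with ${\rm dist}(D,\partial\widetilde{D})\sim\delta$, I would invoke the scalar MS-GFEM Kolmogorov $n$-width estimate---the scalar analogue of \cref{thm:exponential_convergence}, available from \cite{ma2025unified,ma2022error}---which supplies a subspace of dimension $N\leq C(|D^{\ast}|/\delta^{d})|\log\epsilon|^{d+1}$ approximating $p$ in $H^{1}(D)$ (and hence $\nabla p$ in ${\bm L}^{2}(D)$) with error $\leq C\epsilon\,\|{\bm u}_h\|_{{\bm L}^{2}(D^{\ast})}$, yielding the $\epsilon$-term.

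Step 3 (projection into ${\bm W}_h(D)$ and main obstacle). Summing the two continuous approximations, applying the N\'ed\'elec interpolant $\Pi_h$ via \cref{lem:error_estimates_interpolation} to land in ${\bm Q}_{h,\Gamma}(D)$, and finally taking the ${\bm L}^{2}(D)$-orthogonal projection onto the finite-dimensional subspace ${\bm W}_h(D)$ produces ${\bm V}_{H,\epsilon}(D)\subset{\bm W}_h(D)$ of the claimed dimension. Since ${\bm u}_h|_{D}\in{\bm W}_h(D)$ (obtained by testing with zero-extended elements of ${\bm Q}_{h,0}(D)$), the projection can only improve the ${\bm L}^{2}(D)$ error, so the approximation bound is preserved. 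The main obstacle lies in Step 1: \cref{lem:regular_decomposition} is formulated for ${\bm H}_{0}({\rm curl};\Omega)$ with a global zero trace, whereas ${\bm u}_h|_{\widetilde{D}}$ has no prescribed boundary condition. Adapting the decomposition to a simply connected \emph{sub}domain (where \cref{ass:simply-connected} is crucial) while retaining the correct $\delta$-scaling, and reconciling the $\kappa$-orthogonal gauge with the \emph{discrete} harmonicity defining ${\bm W}_h(D^{\ast})$, is the delicate analytic work. A secondary technicality is the control of commutator terms introduced when $\Pi_h$ is applied to the coarse polynomial and MS-GFEM approximants; these are precisely the super-approximation estimates that \cref{lem:equiv_norm} is designed to produce (and which also appear in the proof of \cref{lem:caccioppoli_inequality}).
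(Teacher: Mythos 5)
Your high-level plan — split $\bm u_h$ into a regular piece and a gradient piece, approximate the regular piece on an $H$-mesh and the gradient piece via the scalar MS-GFEM $n$-width, then project into ${\bm W}_h(D)$ — is the right template and matches the paper's strategy in broad strokes. However, there are two genuine gaps where the "delicate analytic work" you flag is in fact resolved by ideas you have not supplied.

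First, the regular decomposition. You propose to apply a local Helmholtz-type decomposition directly on $\widetilde D$ with a $\kappa$-orthogonal gauge. The paper sidesteps the need for a subdomain decomposition theorem (and the difficulty of getting a shape-independent, $\delta$-uniform constant for it) by introducing a cut-off function $\eta$ with $\eta\equiv 1$ on $\widetilde D^+$, $\operatorname{supp}\eta\subset\overline{D^\ast}$, $\|\nabla\eta\|_\infty\lesssim\delta^{-1}$, and applying \cref{lem:regular_decomposition} to $\eta\bm u_h\in{\bm H}_0(\mathrm{curl};\Omega)$ on the \emph{global} domain $\Omega$. The price is only an extra $\delta^{-1}\|\bm u_h\|_{{\bm L}^2(D^\ast)}$ in the $H^1$-bound for the regular part, which is exactly the $\delta^{-1}$ appearing in the statement you are trying to prove. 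Your proposal states that "adapting the decomposition to a simply connected subdomain... is the delicate analytic work," but does not carry it out; the cut-off trick is a concrete replacement that you would need.

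Second, and more importantly, the handoff to the scalar $n$-width is not tight in your proposal. The scalar exponential $n$-width estimate (\cite[Theorem~3.8]{ma2025unified}) applies to elements of the \emph{discrete} scalar harmonic space $W_h(\widetilde D^+)\subset S_{h,\Gamma}(\widetilde D^+)$, not to a generic $H^1$ function $p$ that is merely $\kappa$-harmonic against discrete test functions. Your $p$ (from the continuous decomposition) lives outside the finite element space, and "up to a small residual from the continuous-vs-discrete mismatch" does not get you a bona fide element of $W_h(\widetilde D^+)$. The paper handles this by a four-term split, not two: setting $\nabla p_h = P_{\widetilde D^+}\nabla p$ and $\nabla\psi_h = P_{\widetilde D^+}\bm w$ (weighted $L^2$-projections onto $\nabla S_{h,\Gamma}(\widetilde D^+)$) and writing, on $\widetilde D^+$,
\[
\bm u_h = (\bm w_h - \bm w) + \nabla(p_h+\psi_h) + \bm w - \nabla\psi_h, \qquad \bm w_h := \bm u_h - \nabla p_h.
\]
Then $p_h+\psi_h$ is \emph{exactly} in $W_h(\widetilde D^+)$ (because ${\bm u}_h\in{\bm W}_h(D^\ast)$), so the scalar $n$-width applies cleanly; $\bm w_h-\bm w$ is shown to be $O(h)$-small via the commuting-diagram property \cref{eq:commuting_property} and the interpolation estimate \cref{eq:error_nedelec_interpolation_2} for $\Pi_h\bm w$, with the harmonic-form contribution $\mathcal{\bm H}_h(\widetilde D^+)$ vanishing because $\widetilde D^+$ is simply connected (this is where \cref{ass:simply-connected} enters); and $\bm w,\nabla\psi_h$ are approximated by $O(|D^\ast|/H^d)$-dimensional coarse spaces. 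That $O(h)$ term, absorbed into the $H$-term using $h\leq H$, is not visible in your two-term split.

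Finally, a smaller point: the commutator/super-approximation machinery of \cref{lem:equiv_norm} plays no role in this lemma — it is used exclusively in the Caccioppoli inequality. What is needed here is the commuting diagram property for $\Pi_h$ and $\Sigma_h$, not super-approximation.
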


We prove \cref{lem:caccioppoli_inequality,lem:weak_approximation} in \cref{subsec:proof of Cac-inequality,subsec:proof_weak_approx}, respectively. The proof of \cref{thm:exponential_convergence} is given in \cref{subsec:proof_main_theorem} based on these two lemmas.

\subsection{Proof of the discrete Caccioppoli inequality}\label{subsec:proof of Cac-inequality}
In this subsection, we prove \cref{lem:caccioppoli_inequality}. Whereas the discrete Caccioppoli inequality has been proved for the lowest-order N\'{e}d\'{e}lec element (see, e.g., \cite[Lemma 4.1]{faustmann2022h}), we consider arbitrary high-order elements here, which is particularly relevant to the approximation of Maxwell's equations with large wavenumbers. A key tool for our proof are the following super-approximation estimates for N\'{e}d\'{e}lec elements.

\begin{lemma}\label{lem:super_approximation}
Let $\eta\in C^{r+1}(\Omega)$ satisfy $|\eta|_{W^{j,\infty}(\Omega)}\leq C\delta^{-j}$ for each $0\leq j\leq r+1$. Then for each ${\bm v}_{h}\in {\bm Q}_{h}(\Omega)$ and each element $K$ with $h_{K} :={\rm diam}(K)\leq \delta$, 
\begin{align}
{\displaystyle \Vert \nabla\times(\eta^{2}{\bm v}_{h}- \Pi_{h}(\eta^{2}{\bm v}_{h}))\Vert_{{\bm L}^{2}(K)}\leq C\big(\frac{h_{K}}{\delta}\Vert \nabla \times (\eta {\bm v}_{h})\Vert_{{\bm L}^{2}(K)}+\frac{h_{K}}{\delta^{2}}\Vert {\bm v}_{h}\Vert_{{\bm L}^{2}(K)}\big),\label{eq:super_approx_1}}\\
{\displaystyle \Vert \eta^{2}{\bm v}_{h}- \Pi_{h}(\eta^{2}{\bm v}_{h})\Vert_{{\bm L}^{2}(K)}\leq C\big(\frac{h^{2}_{K}}{\delta}\Vert \nabla \times (\eta {\bm v}_{h})\Vert_{{\bm L}^{2}(K)}+\frac{h^{2}_{K}}{\delta^{2}}\Vert {\bm v}_{h}\Vert_{{\bm L}^{2}(K)}\big),\label{eq:super_approx_2}}
\end{align}
where $\Pi_{h}$ is the N\'{e}d\'{e}lec interpolation operator.    
\end{lemma}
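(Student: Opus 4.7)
The plan is to apply the Nédélec interpolation error estimates from \cref{lem:error_estimates_interpolation} with $s=r$ to $u=\eta^{2}{\bm v}_h$, and then carefully expand the resulting $H^{r}$-seminorms using the Leibniz rule, exploiting the polynomial structure of ${\bm v}_h$ on $K$ together with the structural identity in \cref{lem:equiv_norm}. For (4.5), I apply the curl half of \cref{lem:error_estimates_interpolation} to obtain $\Vert\nabla\times(u-\Pi_h u)\Vert_{{\bm L}^{2}(K)}\leq C h_K^{r}|\nabla\times u|_{H^{r}(K)}$; for (4.6), I apply the $L^{2}$ half to get the additional $|u|_{H^{r}(K)}$ term.

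The core of the argument is the Leibniz expansion of $\partial^{\alpha}(\eta^{2}{\bm v}_h)$ and $\partial^{\alpha}(\nabla\times(\eta^{2}{\bm v}_h))$ for $|\alpha|=r$. Since $\nabla\times{\bm v}_h\in\mathbb{P}_{r-1}(K)^{d}$ by the curl-conforming structure of ${\bm R}_r(K)$, any term in which $r$ derivatives fall on $\nabla\times{\bm v}_h$ vanishes; hence every surviving contribution of $\eta^{2}\nabla\times{\bm v}_h$ carries at least one derivative on $\eta^{2}$, producing a factor $\delta^{-k}h_K^{k-r}$ for $k\geq 1$ after inverse estimates on the polynomial $\nabla\times{\bm v}_h$. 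For the delicate terms in which $r$ derivatives land on ${\bm v}_h$ itself (such as $\eta\nabla\eta\times\partial^{\alpha}{\bm v}_h$), I invoke \cref{lem:equiv_norm} to replace $|\partial^{\alpha}{\bm v}_h|$ by $\sum_{|\gamma|=r-1}|\partial^{\gamma}\nabla\times{\bm v}_h|$, which is constant on $K$ and again controllable by an inverse estimate on $\nabla\times{\bm v}_h$.

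To ensure that the final estimates express in terms of $\Vert\nabla\times(\eta{\bm v}_h)\Vert_{{\bm L}^{2}(K)}$ rather than $\Vert\nabla\times{\bm v}_h\Vert_{{\bm L}^{2}(K)}$, I would use the decomposition $\nabla\times(\eta^{2}{\bm v}_h)=\eta\nabla\times(\eta{\bm v}_h)+\eta\nabla\eta\times{\bm v}_h$ so that the derivatives of $\eta^{2}=\eta\cdot\eta$ naturally leave an $\eta$ prefactor on every top-order contribution. The commutator identity $\eta\nabla\times{\bm v}_h=\nabla\times(\eta{\bm v}_h)-\nabla\eta\times{\bm v}_h$ is then invoked to convert the surviving $\eta\,\partial^{\gamma}\nabla\times{\bm v}_h$ pieces into $\partial^{\gamma}\nabla\times(\eta{\bm v}_h)$ plus Leibniz remainders absorbed into the $\delta^{-1}\Vert{\bm v}_h\Vert$ contribution. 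For the extra power of $h_K/\delta$ needed in (4.6), I first subtract $\eta^{2}({\bm x}_K){\bm v}_h\in{\bm R}_r(K)$, which $\Pi_h$ preserves, so that the effective coefficient becomes $\eta^{2}-\eta^{2}({\bm x}_K)$ with $\Vert\eta^{2}-\eta^{2}({\bm x}_K)\Vert_{L^{\infty}(K)}\leq Ch_K/\delta$, gaining one extra $h_K/\delta$ over the direct bound.

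The main obstacle is coordinating the subtraction of $\eta^{2}({\bm x}_K)$, the use of \cref{lem:equiv_norm}, and the commutator identity so that the precise exponents $h_K^{2}/\delta^{2}$ and $h_K^{2}/\delta$ in (4.6) emerge; a naive application yields only $h_K/\delta$. In particular, since $\eta$ is not assumed to be bounded below, one cannot directly rewrite $\nabla\times{\bm v}_h$ in terms of $\nabla\times(\eta{\bm v}_h)$, and the correct form is only recovered by tracking the $\eta$-prefactor carefully through every Leibniz expansion and applying the commutator identity to terms of the form $\eta\,\partial^{\gamma}\nabla\times{\bm v}_h$ rather than to $\partial^{\gamma}\nabla\times{\bm v}_h$ itself.
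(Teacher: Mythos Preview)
Your overall strategy coincides with the paper's: apply \cref{lem:error_estimates_interpolation} at order $s=r$, expand the $r$-th derivatives by Leibniz, kill the purely polynomial contributions using $\nabla\times{\bm v}_h\in\mathbb{P}_{r-1}^{d}$, and invoke \cref{lem:equiv_norm} to handle the single dangerous term where all $r$ derivatives land on ${\bm v}_h$. That skeleton is correct.

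The gap is in your mechanism for converting $\eta\,\partial^{\gamma}\nabla\times{\bm v}_h$ (with $|\gamma|=r-1$) into a quantity controlled by $\Vert\nabla\times(\eta{\bm v}_h)\Vert_{{\bm L}^{2}(K)}$. Your proposed commutator route writes
\[
\eta\,\partial^{\gamma}\nabla\times{\bm v}_h
=\partial^{\gamma}\nabla\times(\eta{\bm v}_h)
-\partial^{\gamma}(\nabla\eta\times{\bm v}_h)
-\sum_{0<\beta\le\gamma}\binom{\gamma}{\beta}\partial^{\beta}\eta\,\partial^{\gamma-\beta}\nabla\times{\bm v}_h,
\]
and the Leibniz remainders are indeed harmless. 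But the leading term $\partial^{\gamma}\nabla\times(\eta{\bm v}_h)$ is a derivative of order $r-1$ of a \emph{non-polynomial} field, so no inverse inequality is available to reduce it to $\Vert\nabla\times(\eta{\bm v}_h)\Vert_{{\bm L}^{2}(K)}$. If you expand it again, the $\beta=0$ contribution is exactly $\eta\,\partial^{\gamma}\nabla\times{\bm v}_h$, and the argument becomes circular. Tracking the $\eta$-prefactor through the Leibniz expansions, as you propose, does not break this loop.

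The paper resolves this with a freeze-coefficient step that you already use for \cref{eq:super_approx_2} but not for \cref{eq:super_approx_1}: introduce $\widehat\eta:=|K|^{-1}\int_{K}\eta$, split $\eta\,\partial^{\beta}{\bm v}_h=(\eta-\widehat\eta)\partial^{\beta}{\bm v}_h+\widehat\eta\,\partial^{\beta}{\bm v}_h$, and only then apply \cref{lem:equiv_norm} to the second piece. Since $\widehat\eta$ is constant, $\widehat\eta\,\nabla\times{\bm v}_h$ is polynomial, so an inverse inequality legitimately yields $h\,\Vert\widehat\eta\,\nabla\times{\bm v}_h\Vert_{{\bm L}^{2}(K)}$; the identity $\widehat\eta\,\nabla\times{\bm v}_h=\nabla\times(\eta{\bm v}_h)-\nabla\times\big((\eta-\widehat\eta){\bm v}_h\big)$ together with $\Vert\eta-\widehat\eta\Vert_{L^{\infty}(K)}\le Ch_K/\delta$ then gives the desired bound. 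Insert this freezing step in place of the commutator argument and your proof of \cref{eq:super_approx_1} closes.
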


We postpone the proof of \cref{lem:super_approximation} to the end of this subsection. 

\begin{proof}[Proof of \cref{lem:caccioppoli_inequality}]
Let $D_{e}$ and $D^{\ast}_{e}$ be the union of elements in $\mathcal{T}_{h}$ that intersect $D$ and that are contained in $D^{\ast}$, respectively. By the assumption $\delta:={\rm dist}\,\big(D, \, \partial D^{\ast}\setminus\partial \Omega\big)\geq 3\max_{K\cap (D^{\ast}\setminus D)\neq \emptyset}h_{K}$, we have $D\subset D_{e}\subset D^{\ast}_{e}\subset D^{\ast}$ and ${\rm dist}\,\big(D_{e}, \, \partial D^{\ast}_{e}\setminus\partial \Omega\big) \geq \frac{1}{3}\delta$. Let $\eta\in C^{\infty}(\overline{{D}_{e}^{\ast}})$ be a cut-off function such that $\eta \equiv 1$ in $D_{e}$, $\eta = 0$ on $\partial {D}_{e}^{\ast}\setminus\partial \Omega$, and $|\eta|_{W^{j,\infty}(D_{e}^{\ast})}\leq C\delta^{-j}$ for $0\leq j\leq r+1$. A direct calculation yields that for any ${\bm v}_{h}\in {\bm Q}_{h}({D}_{e}^{\ast})$, 
\begin{equation}\label{eq:identity}
\begin{split}
\big\Vert \eta{\bm v}_h\big\Vert^{2}_{a, {D}_{e}^{\ast}} &= \big\Vert \nu^{1/2}\nabla \eta\times {\bm v}_h\big\Vert^{2}_{{\bm L}^{2}({D}_{e}^{\ast})} + a_{{D}_{e}^{\ast}}({\bm v}_h,\eta^{2}{\bm v}_h) \\
&\leq C\nu_{\rm max}\delta^{-1} \Vert {\bm v}_h\big\Vert^{2}_{{\bm L}^{2}({D}_{e}^{\ast}\setminus D_e)} +  a_{{D}_{e}^{\ast}}({\bm v}_h,\eta^{2}{\bm v}_h).    
\end{split}
\end{equation}
It remains to estimate the term $a_{{D}_{e}^{\ast}}({\bm v}_h,\eta^{2}{\bm v}_h)$. Let $\Pi_{h}$ be the N\'{e}d\'{e}lec interpolation operator. Note that for ${\bm v}_{h}\in {\bm W}_{h}({D}_{e}^{\ast})$, $\Pi_{h}(\eta^{2}{\bm v}_h) \in {\bm Q}_{h,0}({D}_{e}^{\ast})$ and thus $a_{{D}_{e}^{\ast}}({\bm v}_h,\Pi_{h}(\eta^{2}{\bm v}_h)) = 0$. Using this equation and noting that $\eta \equiv 1$ in $D_{e}$, we find
\begin{equation}\label{eq:identity_2}
\begin{split}
    a_{{D}_{e}^{\ast}}&({\bm v}_h,\eta^{2}{\bm v}_h) = \big(\nu \nabla\times {\bm v}_h,  \nabla\times(\eta^{2}{\bm v}_h- \Pi_{h}(\eta^{2}{\bm v}_h))\big)_{{\bm L}^{2}({D}_{e}^{\ast}\setminus D_{e})} \\
    &+    \big(\kappa {\bm v}_h,  \eta^{2}{\bm v}_h- \Pi_{h}(\eta^{2}{\bm v}_h)\big)_{{\bm L}^{2}({D}_{e}^{\ast}\setminus D_{e})} := I_1 + I_2.
\end{split}
\end{equation}
The estimate of the two terms $I_1$ and $I_2$ above relies on the super-approximation results. Using \cref{eq:super_approx_1} and an inverse inequality (local to each element), we have 
\begin{equation*}
\begin{split}
I_1\leq &  \sum_{K\subset D_{e}^{\ast}\setminus D_{e}}C\nu_{\rm max} h_{K}\Vert \nabla\times {\bm v}_h\Vert_{{\bm L}^{2}(K)} \big(\delta^{-1} \Vert\nabla \times (\eta {\bm v}_h)\Vert_{{\bm L}^{2}(K)} + \delta^{-2} \Vert {\bm v}_h\Vert_{{\bm L}^{2}(K)}\big) \\
\leq & C\nu_{\rm max}^{2}/(\nu_{\rm min}\delta^{2})\sum_{K\subset D_{e}^{\ast}\setminus D_{e}}  \Vert {\bm v}_h\Vert^{2}_{{\bm L}^{2}(K)} + \frac{\nu_{\rm min}}{4}\sum_{K \subset D_{e}^{\ast}}  \Vert\nabla \times (\eta {\bm v}_h)\Vert_{{\bm L}^{2}(K)}\\
 \leq & C\nu_{\rm max}^{2}/(\nu_{\rm min}\delta^{2})\Vert {\bm v}_h\Vert^{2}_{{\bm L}^{2}(D_{e}^{\ast}\setminus D_{e})} + \frac{1}{4}\Vert \eta {\bm v}_{h}\Vert^{2}_{a, D_e^{\ast}}.
\end{split}
\end{equation*}
Similarly, we can prove that $I_{2}\leq C\kappa_{\rm max} \Vert {\bm v}_h\Vert^{2}_{{\bm L}^{2}(D_{e}^{\ast}\setminus D_{e})}$. Combining \cref{eq:identity}, \cref{eq:identity_2}, and the estimates for $I_1$ and $I_2$ gives the desired estimate.
\end{proof}

It remains to prove \cref{lem:super_approximation} where \cref{lem:equiv_norm} plays a key role.
\begin{proof}[Proof of \cref{lem:super_approximation}]
We only prove \cref{eq:super_approx_1}. The other estimate can be proved using similar arguments. For ease of notation, we drop the dependence of the norms and the element-size on $K$. Using error estimates \cref{eq:error_nedelec_interpolation_1} for the interpolation operator $\Pi_{h}$, we obtain
\begin{equation}\label{eq:sum_two_terms}
\begin{split}
    \Vert \nabla\times (\eta^{2}{\bm v}_{h}- \Pi_{h}(\eta^{2}{\bm v}_{h}))\Vert_{{\bm L}^{2}}\leq Ch^{r+d/2} |\nabla \times (\eta^{2}{\bm v}_{h})|_{{\bm W}^{r,\infty}}\\
    \leq Ch^{r+d/2}\sum_{|{\bm \alpha} |=r} \big(\Vert \partial^{\bm \alpha} (\nabla(\eta^{2})\times{\bm v}_h) \Vert_{{\bm L}^{\infty}} + \Vert\partial^{\bm \alpha}(\eta^{2}\nabla\times{\bm v}_h) \Vert_{{\bm L}^{\infty}}\big).
\end{split}
\end{equation}
It is easy to see that
\begin{equation}\label{eq:first_term_1}
\begin{split}
\sum_{|{\bm \alpha} |=r}\Vert \partial^{\bm \alpha} (\nabla(\eta^{2})\times{\bm v}_h) \Vert_{{\bm L}^{\infty}}& \\
\leq \sum_{\substack{|{\bm \alpha} | + |{\bm \beta}|=r,\\  |{\bm \beta}|\leq r-1}}\Vert \partial^{\bm \alpha} (2\eta \nabla\eta) \Vert_{{\bm L}^{\infty}} \Vert \partial^{\bm \beta} {\bm v}_h \Vert_{{\bm L}^{\infty}}+&\sum_{|{\bm \beta}|= r}\Vert \nabla (\eta^{2}) \times \partial^{\bm \beta} {\bm v}_h \Vert_{{\bm L}^{\infty}}. 
\end{split}
\end{equation}
Using the assumptions on $\eta$ and an inverse inequality, we can prove
\begin{equation}\label{eq:first_term_2}
     Ch^{r+d/2} \sum_{\substack{|{\bm \alpha} | + |{\bm \beta}|=r,\\  |{\bm \beta}|\leq r-1}}\Vert \partial^{\bm \alpha} (2\eta \nabla\eta) \Vert_{{\bm L}^{\infty}} \Vert \partial^{\bm \beta} {\bm v}_h \Vert_{{\bm L}^{\infty}} \leq Ch\delta^{-2} \Vert{\bm v}_h\Vert_{{\bm L}^{2}}.
\end{equation}
To estimate the second term in \cref{eq:first_term_1}, we define $\widehat{\eta}:= |K|^{-1}\int_{K}\eta \,d{\bm x}$. Using the following error estimate: $\Vert \eta - \widehat{\eta}\Vert_{L^{\infty}}\leq Ch|\eta|_{W^{1,\infty}}\leq Ch\delta^{-1}$, we have
\begin{equation}\label{eq:first_term_3}
\begin{split}
 \Vert \nabla (\eta^{2}) \times \partial^{\bm \beta} {\bm v}_h \Vert_{{\bm L}^{\infty}}\leq C\delta^{-1}\Vert \eta \partial^{\bm \beta} {\bm v}_h \Vert_{{\bm L}^{\infty}}\\
 \leq C\delta^{-1}\big(\Vert (\eta-\widehat{\eta}) \partial^{\bm \beta} {\bm v}_h \Vert_{{\bm L}^{\infty}} + \Vert \widehat{\eta} \partial^{\bm \beta} {\bm v}_h \Vert_{{\bm L}^{\infty}}\big)\\
 \leq Ch\delta^{-2} \Vert \partial^{\bm \beta} {\bm v}_h \Vert_{{\bm L}^{\infty}} + C\delta^{-1} \Vert \widehat{\eta} \partial^{\bm \beta} {\bm v}_h \Vert_{{\bm L}^{\infty}}.
\end{split}
\end{equation}
It follows from \cref{eq:first_term_3}, an inverse inequality, and \cref{lem:equiv_norm} that
\begin{equation}\label{eq:first_term_4}
\begin{split}
  h^{r+d/2}  \sum_{|{\bm \beta}|= r}  &\Vert \nabla (\eta^{2}) \times \partial^{\bm \beta} {\bm v}_h \Vert_{{\bm L}^{\infty}} \leq Ch\delta^{-2} \Vert{\bm v}_h\Vert_{{\bm L}^{2}}
  + C h \delta^{-1} \Vert \widehat{\eta}\,\nabla\times {\bm v}_h \Vert_{{\bm L}^{2}}\\
 \leq  Ch\delta^{-2} &\Vert{\bm v}_h\Vert_{{\bm L}^{2}} + Ch\delta^{-1} \big(  \Vert \nabla\times  ((\widehat{\eta}-\eta){\bm v}_h) \Vert_{{\bm L}^{2}} + \Vert \nabla\times  (\eta{\bm v}_h) \Vert_{{\bm L}^{2}}\big)\\
    \leq Ch\delta^{-2} &\Vert{\bm v}_h\Vert_{{\bm L}^{2}} + Ch\delta^{-1} \Vert \nabla\times  (\eta{\bm v}_h) \Vert_{{\bm L}^{2}}.
\end{split}
\end{equation}
Inserting \cref{eq:first_term_2,eq:first_term_4} into \cref{eq:first_term_1} gives
% \begin{equation}
%  h_{K}^{r+d/2}  \sum_{|{\bm \beta}|= r}  \Vert \nabla (\eta^{2}) \times \partial^{\bm \beta} {\bm v}_h \Vert_{{\bm L}^{\infty}} \leq Ch_{K}\delta^{-2} \Vert{\bm v}_h\Vert_{{\bm L}^{2}} + Ch_{K}\delta^{-1} \Vert \nabla\times  (\eta{\bm v}_h) \Vert_{{\bm L}^{2}},   
% \end{equation}
% and thus we have
\begin{equation}\label{eq:first_term_5}
h^{r+d/2}\sum_{|{\bm \alpha} |=r}\Vert \partial^{\bm \alpha} (\nabla(\eta^{2})\times{\bm v}_h) \Vert_{{\bm L}^{\infty}} \leq  Ch\delta^{-2} \Vert{\bm v}_h\Vert_{{\bm L}^{2}} + Ch\delta^{-1} \Vert \nabla\times  (\eta{\bm v}_h) \Vert_{{\bm L}^{2}}.   
\end{equation}
It remains to estimate the second term in \cref{eq:sum_two_terms}. Noting that $\partial^{\bm \beta} (\nabla\times {\bm v}_h) = {\bm 0}$ for any $|\bm \beta|=r$, we have
\begin{equation}\label{eq:second_term_1}
    \sum_{|{\bm \alpha} |=r}\Vert \partial^{\bm \alpha} (\eta^{2}\nabla\times{\bm v}_h) \Vert_{{\bm L}^{\infty}}
    = \sum_{\substack{|{\bm \alpha} | + |{\bm \beta}|=r,\\  |{\bm \beta}|\leq r-1}}\Vert \partial^{\bm \alpha} (\eta^{2}) \partial^{\bm \beta} \nabla\times {\bm v}_h \Vert_{{\bm L}^{\infty}}. 
\end{equation}
We estimate the sum on the RHS of \cref{eq:second_term_1} similarly as above to obtain
\begin{equation}\label{eq:second_term_2}
    h^{r+d/2} \sum_{|{\bm \alpha} |=r}\Vert \partial^{\bm \alpha} (\eta^{2}\nabla\times{\bm v}_h) \Vert_{{\bm L}^{\infty}} \leq Ch\delta^{-2} \Vert{\bm v}_h\Vert_{{\bm L}^{2}} + Ch\delta^{-1} \Vert \nabla\times  (\eta{\bm v}_h) \Vert_{{\bm L}^{2}}.
\end{equation}
Estimate \cref{eq:super_approx_1} then follows by inserting \cref{eq:first_term_5,eq:second_term_2} into \cref{eq:sum_two_terms}.    
\end{proof}

\subsection{Proof of the weak approximation property}\label{subsec:proof_weak_approx}
We first assume that $D\subset D^{\ast}$ are general subdomains of $\Omega$ with ${\rm dist}(D, \partial D^{\ast}\setminus \partial \Omega) = \delta$, and prove several auxiliary lemmas. The proof of \cref{lem:weak_approximation} will be given at the end of this subsection where we restrict ourselves to the special case that $D\subset D^{\ast}\subset\subset \Omega$ are simply connected domains. Before proceeding, we recall the local spaces defined in \cref{eq:local_spaces}. 

Let $D^{+}$ with $D\subset D^{+}\subset D^{\ast}$ be a domain such that ${\rm dist}({D}, \partial D^{+}\setminus \partial \Omega) = {\rm dist}({D}^{+}, \partial D^{\ast}\setminus \partial \Omega) = \delta /2$, and let $\widetilde{D}^{+} \in \mathscr{E}_h(D^{+})$ (see \cref{eq:set_of_domains} for the definition of $\mathscr{E}_h(D^{+})$) be a union of elements. By the assumption $h\leq \delta/4$, we see that $\widetilde{D}^{+}\subset D^{\ast}$ with ${\rm dist}(\widetilde{D}^{+}, \partial D^{\ast}\setminus \partial \Omega)\geq \delta/4$. Our proof relies on the regular decomposition of the vector field ${\bm u}_{h} \in {\bm W}_{h}(D^{\ast})$. To avoid dependence of the stability of the decomposition on the domain $D^{\ast}$, we shall use the technique of a cut-off function. Let $\eta\in W^{1,\infty}(\Omega)$ be a cut-off function such that 
\begin{equation}\label{eq:cut-off-estimate}
 \eta \equiv 1 \;\;\text{on} \;\; \widetilde{D}^{+},\quad {\rm supp}(\eta) \subset \overline{D^{\ast}}, \quad \Vert \nabla \eta\Vert_{{\bm L}^{\infty}(\Omega)}\lesssim \delta^{-1}.  
\end{equation}
It follows that ${\eta} {\bm u}_{h}\in {\bm H}_{0}({\rm curl};\Omega)$. We can apply the regular decomposition (see \cref{lem:regular_decomposition}) to ${\eta} {\bm u}_{h}$, and write it as ${\eta} {\bm u}_{h}={\bm w} + \nabla p$ on $\Omega$, where ${\bm w}\in {\bm H}^{1}_{0}(\Omega)$ and $p\in H_{0}^{1}(\Omega)$. Combining \cref{eq:cut-off-estimate} and the stability estimates \cref{eq:estimate_regular_decomp} gives
\begin{align}\label{eq:estimates_reg_decomposition}
\begin{split}
\Vert \nabla {\bm w}\Vert_{{\bm L}^{2}(\Omega)}\leq C\Vert \nabla\times (\eta {\bm u}_{h})\Vert_{{\bm L}^{2}(\Omega)} &\leq C(\Vert \nabla\times {\bm u}_{h}\Vert_{{\bm L}^{2}(D^{\ast})} + \delta^{-1}\Vert {\bm u}_{h}\Vert_{{\bm L}^{2}(D^{\ast})}),\\    
\Vert {\bm w}\Vert_{{\bm L}^{2}(\Omega)} +  \Vert \nabla p\Vert_{{\bm L}^{2}(\Omega)}&\leq C\Vert \eta {\bm u}_{h}\Vert_{{\bm L}^{2}(\Omega)}\leq C\Vert {\bm u}_{h}\Vert_{{\bm L}^{2}(D^{\ast})},
\end{split}    
\end{align}
where the constant $C$ depends only on shape of $\Omega$, but not on its diameter. 

In order to prove the weak approximation property, we use ideas from \cite{faustmann2022h} to establish a suitable decomposition for ${\bm u}_{h}$. Let ${P}_{\widetilde{D}^{+}}$ be the orthogonal projection of ${\bm L}^{2}(\widetilde{D}^{+})$ onto $\nabla S_{h,\Gamma}(\widetilde{D}^{+})$ with respect to the weighted $L^{2}$ norm $(\kappa\cdot,\cdot)_{{\bm L}^{2}(\widetilde{D}^{+})}$. Moreover, we let $p_{h},\psi_{h}\in S_{h,\Gamma}(\widetilde{D}^{+})$ such that ${P}_{\widetilde{D}^{+}} \nabla p =  \nabla p_{h}$ and ${P}_{\widetilde{D}^{+}} {\bm w} = \nabla \psi_{h}$, i.e.,
\begin{align}\label{eq:orthogonal_projection}
\begin{split}
    (\kappa \nabla p_{h},\nabla v_h)_{{\bm L}^{2}(\widetilde{D}^{+})} =  (\kappa \nabla p,\nabla v_h)_{{\bm L}^{2}(\widetilde{D}^{+})}\quad &\forall v_{h}\in S_{h,\Gamma}(\widetilde{D}^{+}),\\    
    (\kappa \nabla \psi_{h},\nabla v_h)_{{\bm L}^{2}(\widetilde{D}^{+})} =  (\kappa {\bm w},\nabla v_h)_{{\bm L}^{2}(\widetilde{D}^{+})}\quad &\forall v_{h}\in S_{h,\Gamma}(\widetilde{D}^{+}).
\end{split}    
\end{align}    
Let ${\bm w}_{h}:={\bm u}_{h}|_{\widetilde{D}^{+}}-\nabla p_{h} \in {\bm Q}_{h,\Gamma}(\widetilde{D}^{+})$. Since $\eta \equiv 1$ on $\widetilde{D}^{+}$, we have ${\bm u}_{h} = {\bm w} + \nabla p = {\bm w}_{h} + \nabla p_{h}$ on $\widetilde{D}^{+}$. Then we can decompose ${\bm u}_{h}$ on $\widetilde{D}^{+}$ as 
\begin{equation}\label{eq:four_terms_decomposition}
    {\bm u}_{h} = {\bm w}_{h} + \nabla p_{h} = ({\bm w}_{h} - {\bm w}) + \nabla (p_{h} + \psi_{h}) +  {\bm w} - \nabla \psi_{h}.
\end{equation}
In the following, we approximate the four terms on the right-hand side of \cref{eq:four_terms_decomposition}, based on the observations: (i) ${\bm w}_{h}$ and ${\bm w}$ are close; (ii) $p_{h} + \psi_{h}$ is discrete harmonic on $\widetilde{D}^{+}$ and therefore can be approximated in $D$ with exponential accuracy; (iii) ${\bm w}\in {\bm H}^{1}$ can be approximated in ${\bm L}^2$ using usual techniques; (iv) $\nabla \psi_{h}$ is the $L^{2}$ projection of ${\bm w}$ onto $\nabla S_{h,\Gamma}(\widetilde{D}^{+})$, and hence its approximation follows from (iii). To approximate ${\bm w}_{h} - {\bm w}$, we introduce the space of discrete `harmonic forms' (see \cref{subsec:preliminaries}):
\begin{align}\label{eq:discrete-harmonic-forms}
\begin{split}
\mathcal{\bm H}_{h}(\widetilde{D}^{+}):= \big\{{\bm v}_h\in {\bm Q}_{h,\Gamma}(\widetilde{D}^{+}): \nabla\times {\bm v}_h = {\bm 0},\;\\
(\kappa {\bm v}_h, \nabla \xi_h)_{{\bm L}^{2}(\widetilde{D}^{+})} = 0 \quad \forall\,\xi_h\in S_{h,\Gamma}(\widetilde{D}^{+})\big\}.       
\end{split}
\end{align}

%The lemma can be proved by combining the proof of \cite[lemma 3.11]{faustmann2022h} and interpolation error estimates for Nedelec elements, and we only give its sketch here.

\begin{lemma}\label{lem:estimate_term_1}
Let ${\bm w}$, ${\bm w}_{h}$, and $\mathcal{\bm H}_{h}(\widetilde{D}^{+})$ be defined as above. Then
\begin{equation*}
\inf_{{\bm v}_h\in \mathcal{\bm H}_{h}(\widetilde{D}^{+})}\Vert {\bm w} - {\bm w}_{h}-{\bm v}_h \Vert_{{\bm L}^{2}(\widetilde{D}^{+})} \leq C\Big(\frac{\kappa_{\rm max}}{\kappa_{\rm min}}\Big)^{1/2}h\big(\Vert \nabla {\bm w}\Vert_{{\bm L}^{2}(\widetilde{D}^{+})} + \Vert \nabla \times {\bm w}\Vert_{{\bm L}^{2}(\widetilde{D}^{+})} \big),    
\end{equation*}
where $C$ depends only on the shape-regularity of the mesh. 
\end{lemma}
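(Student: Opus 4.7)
The plan is to construct an explicit candidate ${\bm v}_h \in \mathcal{\bm H}_h(\widetilde{D}^+)$ via the discrete Hodge decomposition applied to $\Pi_h {\bm w} - {\bm w}_h$, where $\Pi_h$ is the N\'ed\'elec interpolant, and then control the residual using the interpolation error estimate together with the defining property of $p_h$.

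The key structural observation is that on $\widetilde{D}^+$ the identity ${\bm u}_h = {\bm w} + \nabla p$ yields $\nabla \times {\bm w} = \nabla \times {\bm u}_h$, so the curl of ${\bm w}$ is actually a piecewise polynomial lying in the Raviart-Thomas space ${\bm U}_h$. The commuting diagram property \cref{eq:commuting_property} then gives $\nabla \times \Pi_h {\bm w} = \Sigma_h \nabla \times {\bm w} = \nabla \times {\bm w}$, while $\nabla \times {\bm w}_h = \nabla \times {\bm u}_h - \nabla \times \nabla p_h = \nabla \times {\bm w}$ as well. Combined with ${\bm w} \in {\bm H}^1_0(\Omega)$, which ensures $\Pi_h {\bm w} \in {\bm Q}_{h,\Gamma}(\widetilde{D}^+)$, this shows that $\Pi_h {\bm w} - {\bm w}_h$ lies in ${\bm Q}_{h,\Gamma}(\widetilde{D}^+)$ and is \emph{exactly} curl-free. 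Recognizing this exact curl-freeness is the main conceptual hurdle, since it is what enables a clean application of the discrete Hodge decomposition with no residual curl to control.

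I would then invoke the discrete Hodge decomposition: find $\xi_h \in S_{h,\Gamma}(\widetilde{D}^+)$ such that
\begin{equation*}
(\kappa \nabla \xi_h, \nabla v_h)_{{\bm L}^{2}(\widetilde{D}^+)} = (\kappa(\Pi_h {\bm w} - {\bm w}_h), \nabla v_h)_{{\bm L}^{2}(\widetilde{D}^+)} \quad \forall\, v_h \in S_{h,\Gamma}(\widetilde{D}^+),
\end{equation*}
and set ${\bm v}_h := \Pi_h {\bm w} - {\bm w}_h - \nabla \xi_h$, which by construction lies in $\mathcal{\bm H}_h(\widetilde{D}^+)$. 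Using the defining equation for $p_h$ in \cref{eq:orthogonal_projection} together with ${\bm w}_h = {\bm u}_h - \nabla p_h$ and ${\bm u}_h = {\bm w} + \nabla p$, one verifies $(\kappa {\bm w}_h, \nabla v_h)_{{\bm L}^{2}(\widetilde{D}^+)} = (\kappa {\bm w}, \nabla v_h)_{{\bm L}^{2}(\widetilde{D}^+)}$. Testing the equation for $\xi_h$ with $v_h = \xi_h$ and applying Cauchy--Schwarz in the $\kappa$-weighted inner product then yields $\Vert \nabla \xi_h \Vert_{{\bm L}^{2}(\widetilde{D}^+)} \leq (\kappa_{\rm max}/\kappa_{\rm min})^{1/2} \Vert \Pi_h {\bm w} - {\bm w}\Vert_{{\bm L}^{2}(\widetilde{D}^+)}$.

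To conclude, I write ${\bm w} - {\bm w}_h - {\bm v}_h = ({\bm w} - \Pi_h {\bm w}) + \nabla \xi_h$, apply the triangle inequality, and invoke the N\'ed\'elec interpolation estimate \cref{eq:error_nedelec_interpolation_2}, which is applicable precisely because $\nabla \times {\bm w}|_K = \nabla \times {\bm u}_h|_K \in {\bm D}_r(K)$. Summing elementwise produces the $O(h)$ bound in terms of $\Vert \nabla {\bm w}\Vert_{{\bm L}^{2}(\widetilde{D}^+)} + \Vert \nabla \times {\bm w}\Vert_{{\bm L}^{2}(\widetilde{D}^+)}$, with the factor $(\kappa_{\rm max}/\kappa_{\rm min})^{1/2}$ absorbing the contribution from $\Vert \nabla \xi_h\Vert_{{\bm L}^{2}(\widetilde{D}^+)}$.
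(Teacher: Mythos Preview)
Your proposal is correct and follows essentially the same route as the paper's proof: both use the N\'ed\'elec interpolant $\Pi_h{\bm w}$, exploit the commuting diagram together with $\nabla\times{\bm w}=\nabla\times{\bm u}_h$ to see that $\Pi_h{\bm w}-{\bm w}_h$ is curl-free in ${\bm Q}_{h,\Gamma}(\widetilde D^{+})$, apply the discrete Hodge decomposition to split off a harmonic form, use the defining property of $p_h$ to obtain the $\kappa$-weighted orthogonality, and finish with the interpolation estimate \cref{eq:error_nedelec_interpolation_2}. The only cosmetic difference is that the paper packages the final step as a Galerkin-type orthogonality identity yielding $\Vert{\bm w}-{\bm w}_h-{\bm z}_h\Vert_{{\bm L}^2}\le(\kappa_{\max}/\kappa_{\min})^{1/2}\Vert\Pi_h{\bm w}-{\bm w}\Vert_{{\bm L}^2}$ directly, whereas you split ${\bm w}-{\bm w}_h-{\bm v}_h=({\bm w}-\Pi_h{\bm w})+\nabla\xi_h$ and bound the two pieces separately via the triangle inequality.
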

\begin{proof}
Let $\Pi_{h}$ and $\Sigma_h$ be the N\'{e}d\'{e}lec and Raviart--Thomas interpolation operators, respectively, and let ${\bm U}_{h,\Gamma}(\widetilde{D}^{+})$ be the Raviart-Thomas element space defined in \cref{eq:local_spaces}. Since ${\bm w}\in {\bm H}^{1}(\Omega)$ and $\nabla \times {\bm w} = \nabla \times {\bm u}_{h}$ in $\widetilde{D}^{+}$, we see that the interpolant $\Pi_{h}{\bm w}$ is well defined. Moreover, using \cref{eq:commuting_property} and the fact that $\nabla\times {\bm w} = \nabla\times {\bm w}_h$ gives
\begin{equation*}
\nabla\times (\Pi_{h}{\bm w}) = \Sigma_h(\nabla\times {\bm w}) =  \Sigma_h(\nabla\times {\bm w}_h) = \nabla\times {\bm w}_h,   
\end{equation*}
i.e., $\nabla \times ({\bm w}_{h} - \Pi_{h}{\bm w}) = 0$. Hence, we have the decomposition ${\bm w}_{h} - \Pi_{h}{\bm w} = \nabla \varphi_{h} + {\bm z}_h$, with $\varphi_{h}\in S_{h,\Gamma}(\widetilde{D}^{+})$ and ${\bm z}_h\in \mathcal{\bm H}_{h}(\widetilde{D}^{+})$. Using this decomposition and \cref{eq:orthogonal_projection} yields
\begin{align*}
\begin{split}
 \big(\kappa ({\bm w} &- {\bm w}_{h} - {\bm z}_h),\, {\bm w}_{h} - \Pi_{h}{\bm w} - {\bm z}_h\big)_{{\bm L}^{2}(\widetilde{D}^{+})} =   \big(\kappa ({\bm w} - {\bm w}_{h}-{\bm z}_h),\, \nabla \varphi_h\big)_{{\bm L}^{2}(\widetilde{D}^{+})}   \\
& = \big(\kappa (\nabla p_h - \nabla p),\, \nabla \varphi_h\big)_{{\bm L}^{2}(\widetilde{D}^{+})}  - \big(\kappa {\bm z}_h,\, \nabla \varphi_h\big)_{{\bm L}^{2}(\widetilde{D}^{+})} = 0. 
\end{split}    
\end{align*}
It follows that
\begin{align*}
\begin{split}
\big(\kappa ({\bm w}_{h} &- {\bm w} - {\bm z}_h), \,{\bm w}_{h} - {\bm w}-{\bm z}_h\big)_{{\bm L}^{2}(\widetilde{D}^{+})} =\big(\kappa ({\bm w} - {\bm w}_{h}-{\bm z}_h),\, \Pi_{h}{\bm w} - {\bm w}\big)_{{\bm L}^{2}(\widetilde{D}^{+})}\\
&\leq \Vert \kappa^{1/2}({\bm w}_{h} - {\bm w}-{\bm z}_h)\Vert_{{\bm L}^{2}(\widetilde{D}^{+})} \Vert  \kappa^{1/2}(\Pi_{h}{\bm w} - {\bm w})\Vert_{{\bm L}^{2}(\widetilde{D}^{+})},
\end{split}
\end{align*}
which leads to
\begin{equation}\label{eq:mid_estimate}
 \Vert {\bm w} - {\bm w}_{h}- {\bm z}_h\Vert_{{\bm L}^{2}(\widetilde{D}^{+})} \leq (\kappa_{\rm max}/\kappa_{\rm min})^{1/2} \Vert\Pi_{h}{\bm w} - {\bm w} \Vert_{{\bm L}^{2}(\widetilde{D}^{+})}.   
\end{equation}
% In general, to estimate $\Vert\Pi_{h}{\bm w} - {\bm w} \Vert_{{\bm L}^{2}(\widetilde{D}^{+})}$, we need the boundedness of $\Vert \nabla \times {\bm w}\Vert_{{\bm L}^{p}}$ ($p>2$). 
Noting that $\nabla \times {\bm w} = \nabla \times {\bm u}_{h}\in {\bm U}_{h,\Gamma}(\widetilde{D}^{+})$, we use the error estimate \cref{eq:error_nedelec_interpolation_2} to derive
\begin{equation}\label{eq:L2error_estimate}
\Vert\Pi_{h}{\bm w} - {\bm w} \Vert_{{\bm L}^{2}(\widetilde{D}^{+})} \leq Ch(\Vert \nabla {\bm w}\Vert_{{\bm L}^{2}(\widetilde{D}^{+})} + \Vert \nabla \times {\bm w}\Vert_{{\bm L}^{2}(\widetilde{D}^{+})}).    
\end{equation}
Combining \cref{eq:mid_estimate,eq:L2error_estimate} concludes the proof of the lemma.
\end{proof}
\begin{remark}
If the domain $\widetilde{D}^{+}$ is simply connected and $\partial \widetilde{D}^{+}\cap \Gamma$ is connected, then $\mathcal{\bm H}_{h}(\widetilde{D}^{+})=\{ 0\}$.      
\end{remark}

Next we approximate the term $\nabla (p_{h} + \psi_{h})$. Using \cref{eq:orthogonal_projection} and recalling the regular decomposition, we see that 
\begin{align}
\begin{split}
(\kappa (\nabla p_{h}&+\psi_{h}), \,\nabla v_h)_{{\bm L}^{2}(\widetilde{D}^{+})}  = (\kappa({\bm w} + \nabla p), \,\nabla v_h)_{{\bm L}^{2}(\widetilde{D}^{+})} \\
&= (\kappa{\bm u}_{h}, \,\nabla v_h)_{{\bm L}^{2}(\widetilde{D}^{+})} = 0 \quad \forall v_{h}\in S_{h,0}(\widetilde{D}^{+}),    
\end{split}
\end{align}
i.e., $p_{h} + \psi_{h}$ lies in the following scalar discrete harmonic space 
\begin{equation}\label{eq:scalar-harmonic-space}
W_{h}(\widetilde{D}^{+}):=\big\{u_h\in S_{h,\Gamma}(\widetilde{D}^{+}): (\kappa \nabla u_h,\,\nabla v_h)_{{\bm L}^{2}(\widetilde{D}^{+})} = 0 \quad \forall v_h\in S_{h,0}(\widetilde{D}^{+})\big\}.    
\end{equation}
We can approximate $\nabla(p_{h} + \psi_{h})$ using previous results for scalar elliptic equations. 
\begin{lemma}\label{lem:estimate_term_2}
Let $p_{h}$ and $\psi_{h}$ be defined by \cref{eq:orthogonal_projection}. Then for $\epsilon \in (0,1)$, there exists a space ${\bm V}_{\epsilon}(D)\subset {\bm L}^{2}(D)$ with ${\rm dim}\,{\bm V}_{\epsilon}(D)\leq C_1(|\widetilde{D}^{+}|/\delta^{d}) |\log \epsilon|^{d+1}$ such that  
\begin{equation}\label{eq:harmonic_approx}
\inf_{{\bm v}\in {\bm V}_{\epsilon}(D)}\Vert \nabla(p_{h} + \psi_{h}) - {\bm v}\Vert_{{\bm L}^{2}(D)} \leq  C_2\epsilon (\Vert {\bm w}\Vert_{{\bm L}^{2}(\widetilde{D}^{+})} +  \Vert \nabla p\Vert_{{\bm L}^{2}(\widetilde{D}^{+})}),    
\end{equation}
where $C_1$ and $C_2$ depend only on $d$, ${\kappa_{\rm max}}/{\kappa_{\rm min}}$, the polynomial degree $r$, and the shape-regularity of the mesh $\mathcal{T}_h$.
\end{lemma}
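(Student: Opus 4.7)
The plan is to treat this lemma as a direct consequence of the exponential MS-GFEM convergence already established for scalar discrete harmonic functions, prefaced by an elementary stability step that converts the data-side control $\Vert {\bm w}\Vert_{{\bm L}^{2}(\widetilde{D}^+)} + \Vert \nabla p\Vert_{{\bm L}^{2}(\widetilde{D}^+)}$ into control of $\Vert \nabla(p_h+\psi_h)\Vert_{{\bm L}^{2}(\widetilde{D}^+)}$. The key observation, already made in the paragraph preceding the lemma statement, is that $u_h := p_h+\psi_h$ lies in the scalar discrete harmonic space $W_h(\widetilde{D}^+)$ of \cref{eq:scalar-harmonic-space}, so no new harmonic structure has to be built from scratch for the $H(\mathrm{curl})$ setting.

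First I would establish the stability estimate. Testing the first identity of \cref{eq:orthogonal_projection} against $v_h = p_h$ and the second against $v_h = \psi_h$, and combining Cauchy--Schwarz with the spectral bounds for $\kappa$ in \cref{ass:coefficients}(ii), yields
\begin{equation*}
\Vert \nabla p_h\Vert_{{\bm L}^{2}(\widetilde{D}^+)} \leq (\kappa_{\rm max}/\kappa_{\rm min})^{1/2}\,\Vert \nabla p\Vert_{{\bm L}^{2}(\widetilde{D}^+)},\quad \Vert \nabla \psi_h\Vert_{{\bm L}^{2}(\widetilde{D}^+)} \leq (\kappa_{\rm max}/\kappa_{\rm min})^{1/2}\,\Vert {\bm w}\Vert_{{\bm L}^{2}(\widetilde{D}^+)}.
\end{equation*}
A triangle inequality then gives $\Vert \nabla u_h\Vert_{{\bm L}^{2}(\widetilde{D}^+)} \leq C(\kappa_{\rm max}/\kappa_{\rm min})^{1/2}(\Vert {\bm w}\Vert_{{\bm L}^{2}(\widetilde{D}^+)}+\Vert \nabla p\Vert_{{\bm L}^{2}(\widetilde{D}^+)})$.

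Second, I would invoke the exponential $n$-width bound for scalar discrete $\kappa$-harmonic functions from \cite{ma2022error,ma2025unified}. That theory produces, for every positive integer $n$, a subspace $\Psi_n \subset W_h(\widetilde{D}^+)$ with $\mathrm{dim}\,\Psi_n = n$ and constants $\widetilde{C},\widetilde{b} > 0$ (with $\widetilde{b}^{-(d+1)} \lesssim |\widetilde{D}^+|/\delta^d$ and $\widetilde{C}$ depending only on $d$, $\kappa_{\rm max}/\kappa_{\rm min}$, $r$, and the shape-regularity of $\mathcal{T}_h$) such that
\begin{equation*}
\inf_{v_h \in \Psi_n}\Vert \nabla(u_h - v_h)\Vert_{{\bm L}^{2}(D)} \leq \widetilde{C}\,\exp\bigl(-\widetilde{b}\,n^{1/(d+1)}\bigr)\,\Vert \nabla u_h\Vert_{{\bm L}^{2}(\widetilde{D}^+)}\quad \forall u_h \in W_h(\widetilde{D}^+).
\end{equation*}
Picking the smallest $n$ with $\widetilde{C}\,\exp(-\widetilde{b}\,n^{1/(d+1)}) \leq \epsilon$, one obtains $n \leq C_1(|\widetilde{D}^+|/\delta^d)|\log \epsilon|^{d+1}$, and defining ${\bm V}_\epsilon(D) := \{\nabla v_h|_D : v_h \in \Psi_n\}$ gives the claimed dimension bound. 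Combining the exponential estimate applied to $u_h = p_h+\psi_h$ with the stability estimate of the previous paragraph delivers \cref{eq:harmonic_approx}.

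There is no real obstacle here: the $H(\mathrm{curl})$-specific work has already been compressed into the decomposition \cref{eq:four_terms_decomposition}, whose ``harmonic'' term $\nabla(p_h+\psi_h)$ this lemma isolates, and the scalar MS-GFEM machinery applies verbatim because $p_h+\psi_h$ is scalar discrete harmonic on $\widetilde{D}^+$. The only points that need a little care are tracking the constant in the stability estimate so that the $\epsilon$ on the right of \cref{eq:harmonic_approx} is not polluted by extra factors, and verifying that the scalar MS-GFEM result is stated in a form that allows the oversampling layer of size $\delta$ (rather than $\delta^\ast$) to appear in $\widetilde{b}$; both are direct from the cited references.
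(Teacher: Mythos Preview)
Your proposal is correct and follows essentially the same approach as the paper: both invoke the scalar MS-GFEM exponential $n$-width estimate for discrete $\kappa$-harmonic functions on the pair $(D,\widetilde{D}^{+})$ (citing \cite{ma2025unified}), choose $n$ so that the exponential bound equals $\epsilon$, and combine with the elementary $\kappa$-weighted stability of the projections \cref{eq:orthogonal_projection}. The only cosmetic differences are that the paper works in the $\kappa^{1/2}$-weighted $L^{2}$ norm throughout and bounds $\Vert \kappa^{1/2}\nabla(p_h+\psi_h)\Vert$ directly rather than separately via the triangle inequality, and it takes the approximation space inside $W_h(D)$ rather than as restrictions from $W_h(\widetilde{D}^{+})$.
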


\begin{proof}
We use established exponential decay estimates for a Kolmogorov $n$-width for Poisson-type problems. Noting that ${\rm dist}({D}, \,\partial \widetilde{D}^{+}\setminus \partial \Omega)\geq \delta/2$, by \cite[Theorem 3.8]{ma2025unified}, we know that the following Kolmogorov $n$-width 
\begin{equation*}
d_{n}(D, \widetilde{D}^{+}): = \inf_{Q(n)\subset W_{h}(D)}\sup_{u\in W_{h}(\widetilde{D}^{+})} \inf_{v_h\in Q(n)}\frac {\Vert \kappa^{1/2}\nabla (u_h-v_h)\Vert_{{\bm L}^{2}(D)}}{\Vert \kappa^{1/2}\nabla u_h\Vert_{{\bm L}^{2}(\widetilde{D}^{+})}}    
\end{equation*}
satisfies that $d_{n}(D, \widetilde{D}^{+}) \leq e^{-cn^{1/(d+1)}}$, where $c = C_0(\delta^{d}/|\widetilde{D}^{+}|)^{1/(d+1)}$, with $C_0$ depending only on $d$, ${\kappa_{\rm max}}/{\kappa_{\rm min}}$, the polynomial degree $r$, and the shape-regularity of $\mathcal{T}_h$. Let $n = c^{-(d+1)}|\log \epsilon|^{d+1}$, and ${\bm V}_{\epsilon}(D) = Q^{\rm opt}(n)$, where $Q^{\rm opt}(n)$ is the optimal approximation space associated with $d_{n}(D, \widetilde{D}^{+})$. Since $p_{h} + \psi_{h}\in W_{h}(\widetilde{D}^{+})$, we have
\begin{equation}\label{eq:epsilon_estimate}
\inf_{{\bm v}\in {\bm V}_{\epsilon}(D)}\big\Vert \kappa^{1/2}\big(\nabla(p_{h} + \psi_{h}) - {\bm v}\big)\big\Vert_{{\bm L}^{2}(D)} \leq \epsilon \Vert \kappa^{1/2} \nabla(p_{h} + \psi_{h}) \Vert_{{\bm L}^{2}(\widetilde{D}^{+})}.
\end{equation}
Moreover, it follows from \cref{eq:orthogonal_projection} that $\Vert \kappa^{1/2}\nabla (p_{h} + \psi_{h}) \Vert_{{\bm L}^{2}(\widetilde{D}^{+})}\leq \kappa^{1/2}_{\rm max} \big( \Vert \nabla p\Vert_{{\bm L}^{2}(\widetilde{D}^{+})} +\Vert {\bm w}\Vert_{{\bm L}^{2}(\widetilde{D}^{+})}\big)$. Inserting this estimate into \cref{eq:epsilon_estimate} gives \cref{eq:harmonic_approx}.
\end{proof}
We proceed to approximate ${\bm w}$ and $\nabla \psi_{h}$. 
\begin{lemma}\label{lem:estimate_term_3_4}
Let ${\bm w}$ and $\phi_h$ be defined as above. Then for $0<H<\delta$, there exists a space ${\bm V}_{H}(\widetilde{D}^{+})\subset {\bm L}^{2}(\widetilde{D}^{+})$ with ${\rm dim}\,{\bm V}_{H}(\widetilde{D}^{+}) \leq C_1|D^{\ast}|/H^{d}$ such that 
\begin{align}
   \inf_{{\bm v}\in {\bm V}_{H}(\widetilde{D}^{+})}\Vert {\bm w} - {\bm v}\Vert_{{\bm L}^{2}(\widetilde{D}^{+})} &\leq C_2 H\Vert \nabla {\bm w}\Vert_{{\bm L}^{2}(D^{\ast})}, \label{eq:approximation_w}   \\
\inf_{{\bm v}\in {\bm U}_{H}(\widetilde{D}^{+})}\Vert \nabla \psi_{h} - {\bm v}\Vert_{{\bm L}^{2}(\widetilde{D}^{+})} &\leq C_2(\kappa_{\rm max}/\kappa_{\rm min})^{1/2}H\Vert \nabla {\bm w}\Vert_{{\bm L}^{2}(D^{\ast})}, \label{eq:approximation_psi}  
\end{align}
where ${\bm U}_{H}(\widetilde{D}^{+})=P_{\widetilde{D}^{+}} {\bm V}_{H}(\widetilde{D}^{+})$, and $C_1$, $C_2$ depend only on $d$.
\end{lemma}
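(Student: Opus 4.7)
\medskip

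\noindent\textbf{Proof plan for \cref{lem:estimate_term_3_4}.}
The plan is to build a single low-dimensional space ${\bm V}_H(\widetilde{D}^{+})$ of piecewise-constant vector fields that simultaneously yields both estimates, and then use boundedness of the weighted $L^2$-projection $P_{\widetilde{D}^{+}}$ to transfer the estimate for ${\bm w}$ to one for $\nabla\psi_h = P_{\widetilde{D}^{+}}{\bm w}$.

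First I would cover $\widetilde{D}^{+}$ by a family of (possibly truncated) axis-aligned cubes $\{Q_k\}_{k=1}^N$ of side length $H$. Since $H<\delta$ and $\mathrm{dist}(\widetilde{D}^{+},\partial D^{\ast}\setminus\partial\Omega)\geq \delta/4\geq H/4$, one can arrange that every $Q_k$ that meets $\widetilde{D}^{+}$ is contained in $D^{\ast}$, and the total number of such cubes satisfies $N\leq C_1|D^{\ast}|/H^{d}$ with $C_1$ depending only on $d$. Then I set
\[
{\bm V}_H(\widetilde{D}^{+}) := \Big\{ {\bm v}\in {\bm L}^{2}(\widetilde{D}^{+}) : {\bm v}|_{Q_k\cap \widetilde{D}^{+}}\equiv {\bm c}_k\in\mathbb{R}^{d}\;\; \text{for each } k\Big\},
\]
which has dimension at most $dN\leq C_1'|D^{\ast}|/H^{d}$.

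For \cref{eq:approximation_w}, I take ${\bm v}\in {\bm V}_H(\widetilde{D}^{+})$ to be the cube-wise $L^2$-average of (an $H^1$-extension of) ${\bm w}$ on each $Q_k$. A local Poincar\'e inequality on each cube then gives $\Vert {\bm w} - {\bm v}\Vert_{{\bm L}^{2}(Q_k\cap \widetilde{D}^{+})}\leq CH\Vert \nabla {\bm w}\Vert_{{\bm L}^{2}(Q_k)}$; summing over $k$ and using $\bigcup_k Q_k\subset D^{\ast}$ with finite overlap yields \cref{eq:approximation_w}. For cubes that meet $\partial\Omega$, I use instead a Poincar\'e--Friedrichs inequality with the boundary trace of ${\bm w}\in {\bm H}^1_0(\Omega)$ (which vanishes on $\partial\Omega$), so no extension outside $\Omega$ is needed.

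For \cref{eq:approximation_psi}, given ${\bm v}\in {\bm V}_H(\widetilde{D}^{+})$ from the previous step, I set ${\bm v}^{\ast}:=P_{\widetilde{D}^{+}}{\bm v}\in {\bm U}_H(\widetilde{D}^{+})$. Since $\nabla\psi_h = P_{\widetilde{D}^{+}}{\bm w}$ by \cref{eq:orthogonal_projection} and $P_{\widetilde{D}^{+}}$ is an orthogonal projection in the $\kappa$-weighted $L^2$ inner product, it is a contraction in the weighted norm, so
\[
\kappa_{\min}^{1/2}\Vert \nabla\psi_h - {\bm v}^{\ast}\Vert_{{\bm L}^{2}(\widetilde{D}^{+})}
\leq \Vert \kappa^{1/2}P_{\widetilde{D}^{+}}({\bm w}-{\bm v})\Vert_{{\bm L}^{2}(\widetilde{D}^{+})}
\leq \kappa_{\max}^{1/2}\Vert {\bm w}-{\bm v}\Vert_{{\bm L}^{2}(\widetilde{D}^{+})},
\]
and combining with \cref{eq:approximation_w} produces \cref{eq:approximation_psi}.

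\textbf{Expected obstacle.} The routine calculations are standard; the only technical care is in the geometric covering step: cubes intersecting $\partial\widetilde{D}^{+}\cap\Gamma$ must be handled through the zero boundary trace of ${\bm w}$, while cubes lying near $\partial\widetilde{D}^{+}$ but inside $\Omega$ must stay inside $D^{\ast}$ so that $\Vert\nabla{\bm w}\Vert_{{\bm L}^{2}(D^{\ast})}$ absorbs all local Poincar\'e terms without introducing a dependence on the shape of $\widetilde{D}^{+}$. This can be arranged by the assumption $H<\delta$ and the choice $\mathrm{dist}(\widetilde{D}^{+},\partial D^{\ast}\setminus \partial\Omega)\geq\delta/4$, so no genuine new analytical difficulty arises.
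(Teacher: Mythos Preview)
Your proposal is correct and follows essentially the same route as the paper: approximate ${\bm w}$ by piecewise constants on a covering of $\widetilde{D}^{+}$ by cubes of side $H$ contained in $D^{\ast}$ (the paper just cites \cite[Lemma~5.5(ii)]{ma2025unified} for this step), and then obtain \cref{eq:approximation_psi} from \cref{eq:approximation_w} via the $\kappa$-weighted contraction property of $P_{\widetilde{D}^{+}}$. One minor simplification: since ${\bm w}\in{\bm H}^{1}_{0}(\Omega)$ by construction of the regular decomposition, no $H^{1}$-extension is needed---the cube-wise averages can be taken directly on ${\bm w}$.
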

\begin{proof}
Noting that ${\bm w}\in {\bm H}^{1}(\widetilde{D}^{+})$ and that ${\rm dist}(\widetilde{D}^{+}, \partial D^{\ast}\setminus \partial \Omega)\geq \delta/4$, we can approximate ${\bm w}$ following \cite[Lemma 5.5(ii)]{ma2025unified} by means of piecewise constant functions. Moreover, recalling that $\nabla \psi_{h} = P_{\widetilde{D}^{+}} {\bm w}$, we have $\Vert \kappa^{1/2}(\nabla \psi_{h} - P_{\widetilde{D}^{+}}{\bm v})\Vert_{{\bm L}^{2}(\widetilde{D}^{+})} \leq \Vert  \kappa^{1/2}( {\bm w} - {\bm v})\Vert_{{\bm L}^{2}(\widetilde{D}^{+})}$. Combining this inequality with \cref{eq:approximation_w} yields \cref{eq:approximation_psi}.   
\end{proof}

Now we are ready to prove \cref{lem:weak_approximation}.

\begin{proof}[\textit{Proof of \cref{lem:weak_approximation}}]
By assumption, $D\subset D^{\ast}\subset\subset\omega^{\ast}$ are simply connected domains. Therefore, we can let $D^{+}$ and thus $\widetilde{D}^{+}$ be also simply connected domains (see \cref{ass:simply-connected}). In this case, $\mathcal{\bm H}_{h}(\widetilde{D}^{+}) = \{0\}$. Let ${\bm V}_{\epsilon}(D)$, ${\bm V}_{H}(\widetilde{D}^{+})$, and ${\bm U}_{H}(\widetilde{D}^{+})$ be as in \cref{lem:estimate_term_2,lem:estimate_term_3_4}, and let $\widehat{\bm V}_{H,\epsilon}(D)={\bm V}_{\epsilon}(D) + {\bm V}_{H}(\widetilde{D}^{+})|_{D}+{\bm U}_{H}(\widetilde{D}^{+})|_{D}$ with $\epsilon\in (0,1)$ and $H\in (0,\delta)$. Then ${\rm dim}\,\widehat{\bm V}_{H,\epsilon}(D)\leq C_1 \big(|D^{\ast}|/H^{d} + (|D^{\ast}|/\delta^{d}) |\log \epsilon|^{d+1}\big)$. Using \cref{eq:four_terms_decomposition}, \cref{eq:estimates_reg_decomposition}, and \cref{lem:estimate_term_1,lem:estimate_term_2,lem:estimate_term_3_4,} leads us to
\begin{align}\label{eq:aux_final_approx}
\begin{split}
&\inf_{{\bm v}\in \widehat{\bm V}_{H,\epsilon}(D)}\Vert {\bm u}_h- {\bm v}\Vert_{{\bm L}^{2}(D)}\\
\leq C(h+H)&\big(\Vert \nabla {\bm w}\Vert_{{\bm L}^{2}(D^{\ast})} + \Vert \nabla \times {\bm w}\Vert_{{\bm L}^{2}(D^{\ast})}\big) + C\epsilon (\Vert {\bm w}\Vert_{{\bm L}^{2}(D^{\ast})} +  \Vert \nabla p\Vert_{{\bm L}^{2}(D^{\ast})})  \\
\leq C(h+H)&(\Vert \nabla\times {\bm u}_{h}\Vert_{{\bm L}^{2}(D^\ast)} + \delta^{-1}\Vert {\bm u}_{h}\Vert_{{\bm L}^{2}(D^{\ast})}) + C\epsilon \Vert {\bm u}_{h}\Vert_{{\bm L}^{2}(D^{\ast})}. 
\end{split}
\end{align}
We define the desired approximation space ${\bm V}_{H,\epsilon}(D)$ as the ${\bm L}^{2}$ orthogonal projection of $\widehat{\bm V}_{H,\epsilon}(D)$ onto ${\bm W}_{h}(D)$. Assuming $h\leq H$ and using \cref{eq:aux_final_approx} and properties of the ${\bm L}^{2}$ orthogonal projection, we obtain the estimate \cref{eq:weak_approx_estimate}. When $h>H$, we can simply choose $\widehat{\bm V}_{H,\epsilon}(D) = {\bm W}_{h}(D)$ which satisfies that ${\rm dim}\,\widehat{\bm V}_{H,\epsilon}(D)\leq C_{d}|\widetilde{D}^{+}|/h^{d}\leq C_d|D^{\ast}|/H^{d}$. In this case, the estimate holds trivially since the infimum on the left is zero.
\end{proof}  

%and the fact that $\inf_{{\bm v}\in {\bm V}_{\rm fin}}\Vert {\bm u}_h - {\bm v}\Vert_{{\bm L}^{2}(D_2)} \leq \inf_{{\bm v\in {\bm V}}}\Vert {\bm u}_h - {\bm v}\Vert_{{\bm L}^{2}(D_2)}$ 

\subsection{Proof of \cref{thm:exponential_convergence}}\label{subsec:proof_main_theorem}
We prove \cref{thm:exponential_convergence} by first combining the Caccioppoli inequality and the weak approximation property to obtain a one-step approximation in the energy norm, and then iterating the obtained result over a family of nested subdomains between $\omega$ and $\omega^{\ast}$. Although similar proofs have been well presented in previous works \cite{babuska2011optimal,ma2021novel,ma2025unified}, a special treatment is needed here due to the presence of the term $\delta^{-1}$ in the weak approximation estimate \cref{eq:weak_approx_estimate}. As mentioned above, we first combine \cref{lem:caccioppoli_inequality,lem:weak_approximation} to obtain an auxiliary approximation result. For a domain $D\subset \Omega$ and $\delta>0$, we introduce the following weighted energy norm:
\begin{equation}\label{eq:weighted_norm}
  \Vert {\bm u}\Vert_{a, D, \delta}:= \big(\Vert \nu^{1/2}\nabla\times {\bm u}\Vert^{2}_{{\bm L}^{2}(D)} + \delta^{-2}\Vert \kappa^{1/2}{\bm u}\Vert^{2}_{{\bm L}^{2}(D)}\big)^{1/2} \quad \text{for}\;\;{\bm u}\in {\bm H}({\rm curl};D).
\end{equation}
\vspace{-2ex}
\begin{lemma}\label{lem:aux_approximation}
Let $D\subset D^{\ast}\subset\subset \omega^{\ast}$ with $\delta:={\rm dist} ({D},\partial D^{\ast}\setminus \partial \Omega) > 0$ be simply connected domains, and let $H\in (0,\delta/2)$, $\epsilon\in (0,1)$, and $h\in (0,\delta/8)$. Then, there exists a subspace ${\bm V}_{H,\epsilon}(D)$ of ${\bm W}_{h}(D)$ with ${\rm dim} \, {\bm V}_{H,\epsilon}(D)\leq C^{\prime}_{\rm dim}\big( |D^{\ast}|/H^{d} + (|D^{\ast}|/\delta^{d})|\log \epsilon|^{d+1}\big)$ such that for any ${\bm v}_{h}\in {\bm W}_{h}(D^{\ast})$, 
\begin{equation}\label{eq:single_step_estimate}
\inf_{{\bm z}_h\in {\bm V}_{H,\epsilon}(D)}\Vert {\bm v}_h - {\bm z}_h\Vert_{a, D, \delta} \leq C^{\prime}_{\rm app}(H/\delta + \epsilon) \Vert {\bm v}_h\Vert_{a, D^{\ast}, \delta},    
\end{equation}
where $C^{\prime}_{\rm dim} = 2^{d} C_{\rm dim}$ and $C^{\prime}_{\rm app} = \sqrt{2}C_{\rm app}\max\big\{\nu^{-1/2}_{\rm min},2\kappa^{-1/2}_{\rm min}\big\} (2C_{\rm cac}+ \kappa^{1/2}_{\rm max})$, with $C_{\rm dim}$, $C_{\rm app}$, and $C_{\rm cac}$ from \cref{lem:weak_approximation,lem:caccioppoli_inequality}.
\end{lemma}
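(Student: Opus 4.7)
The plan is to combine the weak approximation estimate (Lemma 4.5) on a slightly enlarged intermediate domain with the discrete Caccioppoli inequality (Lemma 4.4) to upgrade $L^{2}$ control on the enlargement into weighted energy control on $D$. First, I would introduce a simply connected intermediate domain $\widehat{D}$, a union of elements of $\mathcal{T}_{h}$, with $D\subset\widehat{D}\subset D^{\ast}$ and $\mathrm{dist}(D,\partial\widehat{D}) = \mathrm{dist}(\widehat{D},\partial D^{\ast}) = \delta/2$. The existence of such a $\widehat{D}$ follows from Assumption 4.2 applied to a suitable simply connected tubular enlargement of $D$ sitting at distance $\delta/2$ from $D$; the hypothesis $h\leq\delta/8$ supplies both $\delta/2\geq 4h$ (required by Lemma 4.5 on the pair $(\widehat{D},D^{\ast})$) and $\delta/2\geq 3h$ (required by Lemma 4.4 on the pair $(D,\widehat{D})$).

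Applying Lemma 4.5 with distance parameter $\delta/2$ to $(\widehat{D},D^{\ast})$ produces a subspace $\widetilde{\bm V}_{H,\epsilon}(\widehat{D})\subset{\bm W}_{h}(\widehat{D})$ of dimension at most $2^{d}C_{\mathrm{dim}}\bigl(|D^{\ast}|/H^{d} + (|D^{\ast}|/\delta^{d})|\log\epsilon|^{d+1}\bigr)$. I would then define ${\bm V}_{H,\epsilon}(D)$ as the set of restrictions to $D$ of elements of $\widetilde{\bm V}_{H,\epsilon}(\widehat{D})$. This restriction lies in ${\bm W}_{h}(D)$: any test function ${\bm w}_{h}\in{\bm Q}_{h,0}(D)$ extends by zero to an element of ${\bm Q}_{h,0}(\widehat{D})$, so the discrete harmonicity of $\widetilde{\bm z}_{h}$ on $\widehat{D}$ transfers to $\widetilde{\bm z}_{h}|_{D}$ on $D$, and the claimed dimension bound $C'_{\mathrm{dim}}=2^{d}C_{\mathrm{dim}}$ follows immediately.

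For the approximation bound, fix ${\bm v}_{h}\in{\bm W}_{h}(D^{\ast})$, pick $\widetilde{\bm z}_{h}\in\widetilde{\bm V}_{H,\epsilon}(\widehat{D})$ near-optimal for Lemma 4.5, and set ${\bm z}_{h}:=\widetilde{\bm z}_{h}|_{D}$. Since ${\bm v}_{h}-\widetilde{\bm z}_{h}\in{\bm W}_{h}(\widehat{D})$, the Caccioppoli inequality on $(D,\widehat{D})$ yields for the curl part
\[
\Vert\nu^{1/2}\nabla\times({\bm v}_{h}-{\bm z}_{h})\Vert_{{\bm L}^{2}(D)} \leq \Vert{\bm v}_{h}-\widetilde{\bm z}_{h}\Vert_{a,D} \leq 2C_{\mathrm{cac}}\,\delta^{-1}\Vert{\bm v}_{h}-\widetilde{\bm z}_{h}\Vert_{{\bm L}^{2}(\widehat{D})},
\]
while the weighted $L^{2}$ part is bounded directly,
\[
\delta^{-1}\Vert\kappa^{1/2}({\bm v}_{h}-{\bm z}_{h})\Vert_{{\bm L}^{2}(D)} \leq \kappa_{\mathrm{max}}^{1/2}\,\delta^{-1}\Vert{\bm v}_{h}-\widetilde{\bm z}_{h}\Vert_{{\bm L}^{2}(\widehat{D})}.
\]
Into the common factor $\delta^{-1}\Vert{\bm v}_{h}-\widetilde{\bm z}_{h}\Vert_{{\bm L}^{2}(\widehat{D})}$ I would plug the weak approximation estimate of Lemma 4.5 and rewrite the three resulting terms using the elementary inequalities $\Vert\nabla\times{\bm v}_{h}\Vert_{{\bm L}^{2}(D^{\ast})}\leq\nu_{\mathrm{min}}^{-1/2}\Vert{\bm v}_{h}\Vert_{a,D^{\ast},\delta}$, $\delta^{-1}\Vert{\bm v}_{h}\Vert_{{\bm L}^{2}(D^{\ast})}\leq\kappa_{\mathrm{min}}^{-1/2}\Vert{\bm v}_{h}\Vert_{a,D^{\ast},\delta}$, and $\Vert{\bm v}_{h}\Vert_{{\bm L}^{2}(D^{\ast})}\leq\delta\kappa_{\mathrm{min}}^{-1/2}\Vert{\bm v}_{h}\Vert_{a,D^{\ast},\delta}$. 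Combining via $(a^{2}+b^{2})^{1/2}\leq\sqrt{2}\max\{a,b\}$ then produces the stated constant $C'_{\mathrm{app}}=\sqrt{2}C_{\mathrm{app}}\max\{\nu_{\mathrm{min}}^{-1/2},2\kappa_{\mathrm{min}}^{-1/2}\}(2C_{\mathrm{cac}}+\kappa_{\mathrm{max}}^{1/2})$.

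The principal obstacle is strictly organizational. A naive combination would generate a $H/\delta^{2}$ weight on the curl side, because Caccioppoli already contributes one $\delta^{-1}$ and the weak-approximation right-hand side contributes another through its $\delta^{-1}\Vert{\bm v}_{h}\Vert_{{\bm L}^{2}(D^{\ast})}$ term. The key observation making the bookkeeping work is that the target weighted norm $\Vert\cdot\Vert_{a,D,\delta}$ also carries a single $\delta^{-1}$ factor on its $L^{2}$ part, and that the argument of the $L^{2}$ term on $D^{\ast}$ can be absorbed using $\delta^{-1}\Vert{\bm v}_{h}\Vert_{{\bm L}^{2}(D^{\ast})}\leq\kappa_{\mathrm{min}}^{-1/2}\Vert{\bm v}_{h}\Vert_{a,D^{\ast},\delta}$; these two cancellations are exactly what forces the final bound to be proportional to $H/\delta+\epsilon$ rather than any worse power of $\delta^{-1}$.
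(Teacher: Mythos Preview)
Your proposal is correct and follows essentially the same route as the paper: introduce a simply connected intermediate domain at distance $\delta/2$, apply the weak approximation property (Lemma~4.5) on the outer pair to get $L^{2}$ control, apply the Caccioppoli inequality (Lemma~4.4) on the inner pair to pass to the weighted energy norm, and restrict the resulting space to $D$. The only cosmetic differences are that the paper does not insist the intermediate domain be a union of elements, and its $\sqrt{2}$ arises on the right-hand side (from $a+b\le\sqrt{2}\sqrt{a^{2}+b^{2}}$ when bounding $\Vert\nabla\times{\bm v}_h\Vert+2\delta^{-1}\Vert{\bm v}_h\Vert$ by the weighted norm) rather than on the left as you suggest; either bookkeeping yields the stated constant.
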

\begin{proof}
Let $D^{+}$ be a simply connected domain between $D$ and $D^{\ast}$ such that ${\rm dist}(D, \partial D^{+}\setminus \partial \Omega) = {\rm dist}(D^{+}, \partial D^{\ast} \setminus \partial \Omega) = \delta/2$. Applying \cref{lem:weak_approximation} to $D^{+}\subset D^{\ast}$ and using the weighted norm \cref{eq:weighted_norm}, we know that there exists a subspace $\widehat{\bm V}_{H,\epsilon}(D^{+})$ of ${\bm W}_{h}(D^{+})$ with ${\rm dim} \, \widehat{\bm V}_{H,\epsilon}(D^{+}) \leq C_{\rm dim}\big(|D^{\ast}|/H^{d} + (2^{d}|D^{\ast}|/\delta^{d})|\log \epsilon|^{d+1}\big)$ such that 
\begin{align}\label{eq:aux_1}
\begin{split}
\inf_{{\bm z}_{h}\in \widehat{\bm V}_{H,\epsilon}(D^{+})} \Vert {\bm v}_{h} - {\bm z}_{h}&\Vert_{{\bm L}^{2}(D^{+})}\leq  C_{\rm app}\big(H(\Vert \nabla \times {\bm v}_{h}\Vert_{{\bm L}^{2}(D^{\ast})} + 2\delta^{-1}\Vert {\bm v}_{h}\Vert_{{\bm L}^{2}(D^{\ast})})\\
+ \epsilon \Vert {\bm v}_{h}\Vert_{{\bm L}^{2}(D^{\ast})} \big)&\leq \sqrt{2}C_{\rm app}\max\big\{\nu^{-1/2}_{\rm min},2\kappa^{-1/2}_{\rm min}\big\}(H+\epsilon \delta)  \Vert {\bm v}_h\Vert_{a, D^{\ast}, \delta}.  
\end{split}    
\end{align}
Next we apply the Caccioppoli inequality in \cref{lem:caccioppoli_inequality} to $D\subset D^{+}$ and find
\begin{align}\label{eq:aux_2}
\begin{split}
\Vert {\bm v}_{h}\Vert_{a, D, \delta} &\leq C_{\rm cac}(\delta/2)^{-1}\Vert {\bm v}_h\Vert_{{\bm L}^{2}(D^{+}\setminus D)} + \kappa^{1/2}_{\rm max}\delta^{-1} \Vert {\bm v}_h\Vert_{{\bm L}^{2}(D)}\\
&\leq (2C_{\rm cac}+ \kappa^{1/2}_{\rm max})\delta^{-1}  \Vert {\bm v}_h\Vert_{{\bm L}^{2}(D^{+})} \quad \text{for all}\;\; {\bm v}_{h}\in {\bm W}_{h}(D^{+}).
\end{split}    
\end{align}
Combining \cref{eq:aux_1,eq:aux_2} and setting ${\bm V}_{H,\epsilon}(D):=  \widehat{\bm V}_{H,\epsilon}(D^{+})|_{D}$ yields \cref{eq:single_step_estimate}. 
\end{proof}

% Let $R_{D^{\ast},D}$ be the restriction operator from $\big({\bm W}_{h}(D^{\ast}), \Vert\cdot\Vert_{a, D^{\ast},\delta}\big)$ into $\big({\bm W}_{h}(D), \Vert\cdot\Vert_{a, D,\delta}\big)$. \Cref{lem:aux_approximation} gives an estimate for the Kolmogorov $n$-width $d_{n}(R_{D^{\ast},D})$, where $d_{n}(R_{D^{\ast},D})$ is defined following \cref{eq:n-width}. 
% \begin{corollary}
%  Let $D$, $D^{\ast}$, $\delta$, $C_1$, and $C_2$ be as in \cref{lem:aux_approximation}. Then, 
%  \begin{equation}
% d_{n}(R_{D^{\ast},D})\leq e^{-1}     
%  \end{equation}
% for $n\geq C_1(|D^{\ast}|/\delta^{d}) \big((2eC_2)^{d} + \log^{d+1}{(2eC_2)}\big)$. 
% \end{corollary}
% \begin{proof}
% We take $C_2H/\delta = 1/(2e)$ and $C_2\epsilon = 1/(2e)$ in \cref{lem:aux_approximation}. It follows that ${\rm dim} \, V_{H,\epsilon}\leq C_1(|D^{\ast}|/\delta^{d}) \big((2eC_2)^{d} + \log^{d+1}{(2eC_2)}\big)$. The desired estimate follows immediately from the definition of the $n$-width.  
% \end{proof}
Next we estimate the norm of the partition of unity operators. 
\begin{lemma}\label{lem:norm_estimate_POU}
Let the partition of unity operator $\Xi_h$ be defined by \cref{eq:discrete_pou}, and let \cref{ass:POU} be satisfied. Then with $\rho>0$ defined in \cref{ass:POU},
\begin{equation}\label{eq:stability_pou}
\Vert \Xi_h({\bm v}_h)\Vert_{a,\omega}\leq C(1+\rho^{-1}) \Vert {\bm v}_h\Vert_{a,\omega} \quad \text{for all}\;\; {\bm v}_h\in {\bm Q}_{h,\Gamma}(\omega), 
\end{equation}
where $C$ depends on $\kappa_{\rm min}$, $\kappa_{\rm max}$, $\nu_{\rm min}$, $\nu_{\rm max}$, and the shape regularity of the mesh $\mathcal{T}_h$.
\end{lemma}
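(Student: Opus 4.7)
The plan is to work element by element. Fix an element $K\subset \omega$. Since $\Vert \Xi_h({\bm v}_h)\Vert_{a,K}^2 \leq \nu_{\rm max}\Vert \nabla\times \Pi_h(\chi_i {\bm v}_h)\Vert_{{\bm L}^2(K)}^2 + \kappa_{\rm max}\Vert \Pi_h(\chi_i {\bm v}_h)\Vert_{{\bm L}^2(K)}^2$, it suffices to bound the two terms $\Vert \Pi_h(\chi_i {\bm v}_h)\Vert_{{\bm L}^2(K)}$ and $\Vert \nabla\times \Pi_h(\chi_i {\bm v}_h)\Vert_{{\bm L}^2(K)}$ by $C(1+\rho^{-1})\big(\Vert {\bm v}_h\Vert_{{\bm L}^2(K)}+\Vert \nabla\times {\bm v}_h\Vert_{{\bm L}^2(K)}\big)$; summing over $K$ and comparing with $\Vert {\bm v}_h\Vert_{a,\omega}$ (which dominates both $\Vert {\bm v}_h\Vert_{{\bm L}^2(\omega)}$ and $\Vert \nabla\times {\bm v}_h\Vert_{{\bm L}^2(\omega)}$ via $\nu_{\rm min}, \kappa_{\rm min}$) then yields \cref{eq:stability_pou}. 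Note that the interpolant $\Pi_h(\chi_i {\bm v}_h)$ is well-defined element-wise since $\chi_i {\bm v}_h|_K\in {\bm H}^1(K)$ (being the product of a $W^{2,\infty}$ function with a polynomial), and the commuting diagram~\cref{eq:commuting_property} can be applied on $K$.

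For the $L^2$ part I would write $\Pi_h(\chi_i{\bm v}_h) = \chi_i{\bm v}_h - (\chi_i{\bm v}_h - \Pi_h(\chi_i{\bm v}_h))$. The first piece satisfies $\Vert \chi_i{\bm v}_h\Vert_{{\bm L}^2(K)}\leq \Vert {\bm v}_h\Vert_{{\bm L}^2(K)}$ since $0\leq \chi_i\leq 1$, while the interpolation error is controlled by~\cref{eq:error_nedelec_interpolation_1} with $s=1$, giving $Ch_K(|\chi_i{\bm v}_h|_{{\bm H}^1(K)}+|\nabla\times(\chi_i{\bm v}_h)|_{{\bm H}^1(K)})$. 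For the curl part I would use the element-wise commuting identity $\nabla\times \Pi_h(\chi_i{\bm v}_h) = \Sigma_h(\nabla\times(\chi_i{\bm v}_h))$ together with the same triangle-inequality split; the leading term is bounded by $\Vert \nabla\chi_i\times{\bm v}_h + \chi_i\nabla\times{\bm v}_h\Vert_{{\bm L}^2(K)}\leq C\rho^{-1}\Vert {\bm v}_h\Vert_{{\bm L}^2(K)} + \Vert \nabla\times{\bm v}_h\Vert_{{\bm L}^2(K)}$ using \cref{ass:POU}, and the Raviart--Thomas interpolation error is controlled by $Ch_K |\nabla\times(\chi_i{\bm v}_h)|_{{\bm H}^1(K)}$. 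All of the $H^1$ semi-norms on $\chi_i{\bm v}_h$ and $\nabla\times(\chi_i{\bm v}_h)$ are then expanded by the Leibniz rule; the derivatives of $\chi_i$ are absorbed into powers of $\rho^{-1}$ (using $|\chi_i|_{W^{j,\infty}}\leq C\rho^{-j}$ for $j=1,2$), and the derivatives of ${\bm v}_h$ are converted to $L^2$ norms by local inverse inequalities on the polynomial ${\bm v}_h|_K\in {\bm R}_r(K)$, which costs factors of $h_K^{-1}$.

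The main obstacle is the bookkeeping needed to verify that every product of the form $h_K^{a}\rho^{-b}$ that appears either cancels or reduces to a contribution of order $1$ or $\rho^{-1}$. The worst term is $h_K|\nabla\times(\chi_i{\bm v}_h)|_{{\bm H}^1(K)}$, which after Leibniz and inverse inequalities produces $h_K\rho^{-2}\Vert {\bm v}_h\Vert_{{\bm L}^2(K)}$ and $h_K\rho^{-1}\Vert\nabla\times{\bm v}_h\Vert_{{\bm L}^2(K)}$; both of these are acceptable provided $h_K\lesssim \rho$, which is a standard and implicit assumption on any partition of unity whose supports overlap by at least one mesh layer, and is consistent with the claim that $C$ depends only on coefficients and shape-regularity (since for $h_K>\rho$, one can instead absorb the interpolant into $\Vert {\bm v}_h\Vert_{a,K}$ directly via inverse estimates on the whole discrete space). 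After collecting terms, each element contributes at most $C(1+\rho^{-1})\big(\Vert {\bm v}_h\Vert_{{\bm L}^2(K)}+\Vert \nabla\times {\bm v}_h\Vert_{{\bm L}^2(K)}\big)$; summing over the finitely many $K\subset \omega$ and invoking \cref{ass:coefficients} completes the proof.
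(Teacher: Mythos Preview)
Your proposal is correct and follows essentially the same approach as the paper: split $\Pi_h(\chi{\bm v}_h)=\chi{\bm v}_h+(\Pi_h(\chi{\bm v}_h)-\chi{\bm v}_h)$ element-wise, bound the interpolation error via \cref{lem:error_estimates_interpolation} with $s=1$, then expand the $H^1$ seminorms by Leibniz and absorb the $h_K$ factors via inverse inequalities. The paper handles the curl term directly through the second line of \cref{eq:error_nedelec_interpolation_1} rather than passing through the commuting diagram and the Raviart--Thomas interpolant as you do, but the two routes are equivalent and produce the same bounds. Your observation that the term $h_K\rho^{-2}\Vert{\bm v}_h\Vert_{{\bm L}^2(K)}$ requires $h_K\lesssim\rho$ is correct; the paper's step \cref{eq:estimate_pou_2} makes the same implicit use of this (standard) assumption without stating it explicitly.
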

\begin{proof}
Using the error estimates \cref{eq:error_nedelec_interpolation_1} for the interpolation operator $\Pi_h$, we have
\begin{align}\label{eq:estimate_pou_1}
\begin{split}
    \Vert \chi {\bm v}_h -& \Pi_{h}(\chi {\bm v}_h)\Vert_{{\bm L}^{2}(K)}+ \Vert \nabla\times (\chi {\bm v}_h - \Pi_{h}(\chi {\bm v}_h))\Vert_{{\bm L}^{2}(K)} \\
    &\leq Ch \big(|\chi {\bm v}_h|_{{\bm H}^{1}(K)} + |\nabla\times (\chi {\bm v}_h)|_{{\bm H}^{1}(K)} \big).
\end{split}
\end{align}
Furthermore, it follows from \cref{ass:POU} and an inverse estimate that 
\begin{equation}\label{eq:estimate_pou_2}
h \big(|\chi {\bm v}_h|_{{\bm H}^{1}(K)} + |\nabla\times (\chi {\bm v}_h)|_{{\bm H}^{1}(K)} \big) \leq C(1+\rho^{-1}) \Vert {\bm v}_h\Vert_{{\bm H}({\rm curl};\,K)}.    
\end{equation}
Inserting \cref{eq:estimate_pou_2} into \cref{eq:estimate_pou_1}, summing the result over all elements $K\subset\omega$, and using a triangle inequality and \cref{ass:POU} again, we obtain
\begin{align}
\begin{split}
\Vert \Pi_h(\chi {\bm v}_h)\Vert_{{\bm H}({\rm curl};\omega)} &\leq \Vert \chi {\bm v}_h\Vert_{{\bm H}({\rm curl};\omega)} + \Vert \chi {\bm v}_h -  \Pi_h(\chi {\bm v}_h)\Vert_{{\bm H}({\rm curl};\omega)}   \\
&\leq C(1+\rho^{-1}) \Vert {\bm v}_h\Vert_{{\bm H}({\rm curl;\omega})},
\end{split}
\end{align}
which immediately gives \cref{eq:stability_pou}.
\end{proof}

Now we are ready to prove \cref{thm:exponential_convergence}. For $N\in \mathbb{N}$, let $\{\omega^{k} \}_{k=2}^{N}$ be a family of simply connected domains between $\omega$ and $\omega^{\ast}$ with $\omega^{N+1}:=\omega\subset \omega^{N}\subset\cdots\subset\omega^{1}:=\omega^{\ast}$ and $\delta:={\rm dist}(\omega^{k+1},\partial \omega^{k}\setminus \partial \Omega) =  \delta^{\ast}/N$. For each $1\leq k\leq N$, let ${\bm V}^{k}_{H,\epsilon}$ be the space as in \cref{lem:aux_approximation} with $D^{\ast}$ and $D$ replaced by $\omega^{k}$ and $\omega^{k+1}$, respectively, and let $\widehat{\bm V}_{\rm app} = {\bm V}_{H,\epsilon}^{1}|_{\omega} + \cdots + {\bm V}_{H,\epsilon}^{N}|_{\omega} \subset {\bm W}_{h}(\omega)$. We first assume that $h\leq \delta/8$. By repeatedly applying \cref{lem:aux_approximation}, we see that for each ${\bm v}_{h}\in {\bm W}_{h}(\omega^{\ast})$,
\begin{equation}
\inf_{{\bm z}_h\in \widetilde{\bm V}_{\rm app}}\Vert {\bm v}_h|_{\omega} - {\bm z}_h\Vert_{a, \omega, \delta} \leq \big[C^{\prime}_{\rm app}(H/\delta + \epsilon) \big]^{N} \Vert {\bm v}_h\Vert_{a, \omega^{\ast}, \delta}.    
\end{equation}
Next we choose $C^{\prime}_{\rm app}H/\delta = C^{\prime}_{\rm app}NH/\delta^{\ast} = 1/(2e)$ and $C^{\prime}_{\rm app}\epsilon = 1/(2e)$. It follows that for each $1\leq k\leq N$, ${\rm dim} \, {\bm V}_{H,\epsilon}^{k} \leq  C^{\prime\prime}_{\rm dim}N^{d}|\omega^{\ast}|/(\delta^{\ast})^{d}$ with $C^{\prime\prime}_{\rm dim}:=C^{\prime}_{\rm dim}\big((2eC^{\prime}_{\rm app})^{d} + \log^{d+1}{(2eC^{\prime}_{\rm app})}\big)$. Moreover, we have, with ${\rm dim} \, \widehat{\bm V}_{\rm app} \leq  C^{\prime\prime}_{\rm dim}N^{d+1}|\omega^{\ast}|/(\delta^{\ast})^{d}$, 
\begin{equation}\label{eq:weighted_exponential_decay}
\inf_{{\bm z}_h\in \widehat{\bm V}_{\rm app}}\Vert {\bm v}_h|_{\omega} - {\bm z}_h\Vert_{a, \omega, \delta} \leq e^{-N} \Vert {\bm v}_h\Vert_{a, \omega^{\ast},\delta}.     
\end{equation}
When $h>\delta/8$, we can simply let $\widehat{\bm V}_{\rm app} := {\bm W}_h(\omega)$. In this case, ${\rm dim} \, \widehat{\bm V}_{\rm app} \leq C_{d} |\omega|/h^{d} \leq C_{d}N^{d}|\omega|/(\delta^{\ast})^{d}$, and \cref{eq:weighted_exponential_decay} holds trivially since ${\bm v}_h|_{\omega}\in {\bm W}_h(\omega)$. Using \cref{eq:weighted_exponential_decay} and recalling the definition of the norm $\Vert \cdot\Vert_{a, \omega, \delta}$, we further have
\begin{align}
  \begin{split}
& \inf_{{\bm z}_h\in \widehat{\bm V}_{\rm app}}\Vert {\bm v}_h - {\bm z}_h\Vert_{a, \omega}\leq \max\{1,\delta\}\inf_{{\bm z}_h\in \widehat{\bm V}_{\rm app}}\Vert {\bm v}_h - {\bm z}_h\Vert_{a, \omega, \delta} \\
& \leq \max\{1,\delta\} e^{-N} \Vert {\bm v}_h\Vert_{a, \omega^{\ast}, \delta}\leq   \max\{\delta,\delta^{-1}\} e^{-N}\Vert {\bm v}_h\Vert_{a, \omega^{\ast}}, \\[1ex]
&\leq \max\{\delta^{\ast}, 1/\delta^{\ast}\} Ne^{-N}\Vert {\bm v}_h\Vert_{a, \omega^{\ast}} \leq \max\{\delta^{\ast}, 1/\delta^{\ast}\} e^{-N/2}\Vert {\bm v}_h\Vert_{a, \omega^{\ast}}.
\end{split}  
\end{align}
Let $n:={\rm dim} \widehat{\bm V}_{\rm app}\leq C^{\prime\prime}_{\rm dim}N^{d+1}|\omega^{\ast}|/(\delta^{\ast})^{d}$ and $b:= \frac{1}{2}\big(C^{\prime\prime}_{\rm dim}|\omega^{\ast}|/(\delta^{\ast})^{d}\big)^{-1/(d+1)}$. It follows that $N\geq 2b n^{1/(d+1)}$ and thus
\begin{equation}\label{eq:nonweighted_exponential_decay}
\inf_{{\bm z}_h\in \widehat{\bm V}_{\rm app}}\Vert {\bm v}_h - {\bm z}_h\Vert_{a, \omega}\leq  \max\{\delta^{\ast}, 1/\delta^{\ast}\} e^{-bn^{1/(d+1)}}\Vert {\bm v}_h\Vert_{a, \omega^{\ast}}.  
\end{equation}
Let ${\bm V}_{\rm app}:= \Xi_h(\widehat{\bm V}_{\rm app})\subset {\bm Q}_{h,0}(\omega)$. Then ${\rm dim}\,{\bm V}_{\rm app} \leq n$. Combining \cref{eq:nonweighted_exponential_decay} and \cref{lem:norm_estimate_POU} yields
\begin{align}\label{eq:aux_exponential_decay}
\begin{split}
 \inf_{{\bm z}_h\in {\bm V}_{\rm app}}&\Vert \Xi_h({\bm v}_h) - {\bm z}_h\Vert_{a, \omega} \leq \Vert\Xi_h\Vert \inf_{{\bm z}_h\in \widehat{\bm V}_{\rm app}}\Vert {\bm v}_h - {\bm z}_h\Vert_{a, \omega} \\
 \leq &C(1+\rho^{-1}) \max\{\delta^{\ast}, 1/\delta^{\ast}\} e^{-bn^{1/(d+1)}}   \Vert{\bm v}_h\Vert_{a, \omega^{\ast}}. 
\end{split}
\end{align}
% \begin{equation}
% \inf_{{\bm z}_h\in {\bm V}_{\rm app}}\Vert \Xi_h({\bm v}_h) - {\bm z}_h\Vert_{a, \omega} \leq C(1+\rho^{-1})e^{-bn^{1/(d+1)}}\Vert {\bm v}_h\Vert_{a, \omega^{\ast}}.    
% \end{equation}
To ensure $N\geq 1$, it suffices to let $n\geq n_0:=  C^{\prime\prime}_{\rm dim}|\omega^{\ast}|/(\delta^{\ast})^{d}$.  
\Cref{thm:exponential_convergence} then follows immediately from \cref{eq:nonweighted_exponential_decay}, \cref{eq:aux_exponential_decay}, and the definition of the $n$-width.
% \begin{equation}
%   d_{n}(R_{\omega^{\ast},\omega}) \leq \exp(-cn^{1/(d+1)})  
% \end{equation}
% where $c = \frac{1}{2}(C_{\rm app}|\omega^{\ast}|/(\delta^{\ast})^{d})^{-1/(d+1)}$. Note that $d_{n}(P)\leq \Vert \Xi\Vert d_{n}(R_{\omega^{\ast},\omega})$. The desired result follows immediately. 
% which is typically used within a Krylov subspace method (e.g., GMRES) to accelerate convergence.
% \begin{equation}
%     \begin{split}
%          {\bf u}^{j+\frac12} &= {\bf u}^{j} + \sum_{j=1}^{M} {\bf R}_j^T \widetilde{\bm \Xi}_j\widetilde{\bf K}_j^{-1} \widetilde{\bf R}_j\big(\tilde{\bf b} -  {\bf K}{\bf u}^{j}\big),\\
%          {\bf u}^{j+1} &= {\bf u}^{j+\frac12} + {\bf R}_H^T {\bf K}_H^{-1} {\bf R}_H\big(\tilde{\bf b} -  {\bf K}{\bf u}^{j+\frac12}\big).
%     \end{split}
% \end{equation}

\subsection{Extension to (boundary) subdomains with nontrivial topologies}\label{subsec:nontrivial_topology} 
In this subsection, we consider the local convergence of MS-GFEM on domains that may intersect $\partial\Omega$ and have nontrivial topologies. An inspection of the proof of \cref{thm:exponential_convergence} can find that to extend the exponential decay result to this case, one only needs to incorporate the space of discrete harmonic forms $\mathcal{\bm H}_{h}(\widetilde{D}^{+})$ into the approximation space in \cref{lem:weak_approximation}. The key point is the dimension of $\mathcal{\bm H}_{h}(\widetilde{D}^{+})$. As discussed in \cref{subsec:preliminaries}, ${\rm dim}\,\mathcal{\bm H}_{h}(\widetilde{D}^{+})$ depends on the topological properties of $\widetilde{D}^{+}$ and $\partial \widetilde{D}^{+}\cap \Gamma$, but not on the underlying triangulation $\mathcal{T}_h$. Based on \cref{lem:dimension_harmonic_forms,lem:Betti-numbers}, and in view of the iteration argument used in the proof of \cref{thm:exponential_convergence}, we assume that ${\rm dim}\,\mathcal{\bm H}_{h}(\widetilde{D}^{+})$ can be bounded uniformly for a family of nested domains $\{D\}$ between $\omega$ and $\omega^{\ast}$. The following assumption is a counterpart of \cref{ass:simply-connected}.

\begin{assumption}\label{ass:dimension_harmonic_forms}
For each $N\in\mathbb{N}$, let $\{\omega_{N}^{k} \}_{k=1}^{N+1}$ be a family of domains between $\omega$ and $\omega^{\ast}$ with $\omega_{N}^{N+1}:=\omega\subset \cdots\subset\omega_{N}^{1}:=\omega^{\ast}$ and ${\rm dist}(\omega_{N}^{k+1},\partial \omega_{N}^{k}\setminus \partial \Omega) = \delta^{\ast}/N$. There exists $d_{\mathtt{har}}\in\mathbb{N}$ such that for each $N\in \mathbb{N}$ and $k=1,\cdots,N+1$, 
\begin{equation}
\min_{D\in \mathscr{E}_h(\omega_N^{k})}{\rm dim}\,\mathcal{\bm H}_{h}(D)\leq d_{\mathtt{har}},    
\end{equation}
where $\mathscr{E}_h(\omega_N^{k})$ is defined in \cref{eq:set_of_domains}. 
\end{assumption}

Using \cref{ass:dimension_harmonic_forms} and following along the same lines as in the proof of \cref{lem:weak_approximation}, we can prove a similar weak approximation property in this case.  
\begin{lemma}\label{lem:weak_approximation_general_domain}
Let \cref{ass:dimension_harmonic_forms} be satisfied, and let $D\subset D^{\ast}$ be from the family of domains defined in \cref{ass:dimension_harmonic_forms} with $\delta:={\rm dist} ({D},\partial D^{\ast}\setminus \partial \Omega) > 0$. Suppose that $h\leq \delta/4$. Then there exists a family of subspaces ${\bm V}_{H,\epsilon}(D)$ of ${\bm W}_{h}(D)$ with ${\rm dim} \, {\bm V}_{H,\epsilon}(D)\leq d_{\mathtt{har}}+C_{{\rm dim}}\big(|D^{\ast}|/H^{d} + (|D^{\ast}|/\delta^{d})|\log \epsilon|^{d+1}\big)$, such that for any ${\bm u}_{h}\in {\bm W}_{h}(D^{\ast})$,
\begin{align}
\begin{split}
&\inf_{{\bm z}_{h}\in {\bm V}_{H,\epsilon}(D)} \Vert {\bm u}_{h} - {\bm z}_{h}\Vert_{{\bm L}^{2}(D)}\\
&\leq C_{\rm app} \big(H(\Vert \nabla \times {\bm u}_{h}\Vert_{{\bm L}^{2}(D^{\ast})} + \delta^{-1}\Vert {\bm u}_{h}\Vert_{{\bm L}^{2}(D^{\ast})}) + \epsilon \Vert {\bm u}_{h}\Vert_{{\bm L}^{2}(D^{\ast})}\big),    
\end{split}    
\end{align}
where the constants $C_{\rm dim}$ and $C_{\rm app}$ are as in \cref{lem:weak_approximation}. 
\end{lemma}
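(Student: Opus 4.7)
The plan is to mimic the proof of \cref{lem:weak_approximation} step by step, tracking the single place where simple connectedness was actually used, namely the conclusion that $\mathcal{\bm H}_{h}(\widetilde{D}^{+}) = \{0\}$ in \cref{lem:estimate_term_1}. In the present setting this space is generally nontrivial, so I will simply append (a representative copy of) it to the approximation space and control the resulting dimension penalty via \cref{ass:dimension_harmonic_forms}.

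Concretely, given $D\subset D^{\ast}$ from the admissible family, I would first pick an intermediate layer $D^{+}$ with $\mathrm{dist}(D,\partial D^{+}\setminus \partial\Omega) = \mathrm{dist}(D^{+},\partial D^{\ast}\setminus\partial\Omega) = \delta/2$ (using the family structure of \cref{ass:dimension_harmonic_forms}). Then I would invoke \cref{ass:dimension_harmonic_forms} to select $\widetilde{D}^{+} \in \mathscr{E}_h(D^{+})$ realising the minimum $\mathrm{dim}\,\mathcal{\bm H}_h(\widetilde{D}^{+}) \leq d_{\mathtt{har}}$. With this choice of $\widetilde{D}^{+}$, the cut-off function $\eta$, the regular decomposition $\eta {\bm u}_{h} = {\bm w} + \nabla p$, the $\kappa$-weighted projections producing $p_{h}$ and $\psi_{h}$, and the four-term splitting \cref{eq:four_terms_decomposition} all go through verbatim, because these constructions do not rely on the topology of $\widetilde{D}^{+}$.

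The estimation of the four terms then proceeds exactly as in the simply connected case: \cref{lem:estimate_term_1} yields, for some ${\bm z}_{h}^{\star}\in \mathcal{\bm H}_{h}(\widetilde{D}^{+})$,
\[
\Vert {\bm w} - {\bm w}_{h} - {\bm z}_{h}^{\star}\Vert_{{\bm L}^{2}(\widetilde{D}^{+})} \leq C\bigl(\kappa_{\rm max}/\kappa_{\rm min}\bigr)^{1/2} h\bigl(\Vert\nabla {\bm w}\Vert_{{\bm L}^{2}(\widetilde{D}^{+})} + \Vert \nabla\times {\bm w}\Vert_{{\bm L}^{2}(\widetilde{D}^{+})}\bigr),
\]
while \cref{lem:estimate_term_2,lem:estimate_term_3_4} provide the spaces ${\bm V}_{\epsilon}(D)$, ${\bm V}_{H}(\widetilde{D}^{+})$, and ${\bm U}_{H}(\widetilde{D}^{+})$ controlling the remaining three terms. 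Combining these with the stability estimates \cref{eq:estimates_reg_decomposition}, exactly as in \cref{eq:aux_final_approx}, I obtain the same right-hand side as in \cref{eq:weak_approx_estimate}, with the role of the (previously trivial) harmonic-form component now played by an element of $\mathcal{\bm H}_{h}(\widetilde{D}^{+})|_{D}$.

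Finally, I would define
\[
\widehat{\bm V}_{H,\epsilon}(D) := {\bm V}_{\epsilon}(D) + {\bm V}_{H}(\widetilde{D}^{+})|_{D} + {\bm U}_{H}(\widetilde{D}^{+})|_{D} + \mathcal{\bm H}_{h}(\widetilde{D}^{+})|_{D},
\]
and let ${\bm V}_{H,\epsilon}(D)$ be its ${\bm L}^{2}$-orthogonal projection onto ${\bm W}_{h}(D)$. The projection is harmless because $\mathcal{\bm H}_{h}(\widetilde{D}^{+})|_{D}$ already consists of discrete harmonic vector fields on $D$ in the sense that each of its elements satisfies $\nabla\times \cdot = {\bm 0}$ and the $\kappa$-orthogonality condition defining ${\bm W}_{h}(D)$ (after restriction). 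The dimension bound follows by additivity: the three old spaces contribute at most $C_{{\rm dim}}\bigl(|D^{\ast}|/H^{d} + (|D^{\ast}|/\delta^{d})|\log\epsilon|^{d+1}\bigr)$ as before, and $\mathcal{\bm H}_{h}(\widetilde{D}^{+})|_{D}$ contributes at most $d_{\mathtt{har}}$. I expect the only subtle point to be verifying that the image of $\mathcal{\bm H}_{h}(\widetilde{D}^{+})|_{D}$ under the ${\bm L}^{2}$-orthogonal projection into ${\bm W}_{h}(D)$ still captures the required harmonic component appearing in \cref{lem:estimate_term_1}; this amounts to observing that ${\bm L}^{2}$-projection does not increase the ${\bm L}^{2}$-approximation error of the target, so that the estimate \cref{eq:weak_approx_estimate} is preserved. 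The case $h > H$ is handled by the trivial choice $\widehat{\bm V}_{H,\epsilon}(D) = {\bm W}_{h}(D)$, exactly as at the end of the proof of \cref{lem:weak_approximation}.
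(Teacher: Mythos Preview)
Your proposal is correct and mirrors exactly what the paper does: it states that the result follows ``along the same lines as in the proof of \cref{lem:weak_approximation}'' by incorporating $\mathcal{\bm H}_{h}(\widetilde{D}^{+})$ into the approximation space and bounding its dimension via \cref{ass:dimension_harmonic_forms}. One small correction: your claim that $\mathcal{\bm H}_{h}(\widetilde{D}^{+})|_{D}$ already lies in ${\bm W}_{h}(D)$ is not right (the defining orthogonality conditions are different, and orthogonality on $\widetilde{D}^{+}$ does not restrict to orthogonality on $D$), but your subsequent observation---that the ${\bm L}^{2}$-projection onto ${\bm W}_{h}(D)$ does not increase the ${\bm L}^{2}$ approximation error because ${\bm u}_{h}|_{D}\in {\bm W}_{h}(D)$---is the correct justification and is all that is needed.
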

Using \cref{lem:weak_approximation_general_domain} and similar arguments as in the proof of \cref{thm:exponential_convergence}, we can prove an exponential decay for the $n$-width, analogous to \cref{thm:exponential_convergence}. 
\begin{theorem}\label{thm:exponential_convergence_general_domain}
Let $\omega\subset \omega^{\ast}$ with $\delta^{\ast}:={\rm dist}(\omega,\partial \omega^{\ast})>0$, and let \cref{ass:dimension_harmonic_forms} and \cref{ass:POU} be satisfied. Then, there exist $n_0: = (C_1+d_{\mathtt{har}})|\omega^{\ast}|/(\delta^{\ast})^{d}$ and $b:=\frac{1}{2}\big((C_1+d_{\mathtt{har}})|\omega^{\ast}|/(\delta^{\ast})^{d}\big)^{-1/(d+1)}$, such that if $n>n_0$,
\begin{equation}
  d_{n}(P)\leq  C_2(1+\rho^{-1}) \max\{\delta^{\ast}, 1/\delta^{\ast} \}\, e^{-bn^{1/(d+1)}},  
\end{equation}
where $C_1$ and $C_2$ are as in \cref{thm:exponential_convergence}.
\end{theorem}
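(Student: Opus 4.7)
The plan is to mirror almost exactly the proof of \cref{thm:exponential_convergence} given in \cref{subsec:proof_main_theorem}, with the only substantive change being that the weak approximation estimate is drawn from \cref{lem:weak_approximation_general_domain} in place of \cref{lem:weak_approximation}. Accordingly, I would first establish a single-step approximation result analogous to \cref{lem:aux_approximation}: given $D\subset D^{\ast}$ from the family in \cref{ass:dimension_harmonic_forms} with $\delta = \operatorname{dist}(D,\partial D^{\ast}\setminus\partial\Omega)$, insert an intermediate layer $D^{+}$ with $\operatorname{dist}(D,\partial D^{+}\setminus\partial\Omega) = \operatorname{dist}(D^{+},\partial D^{\ast}\setminus\partial\Omega) = \delta/2$ (which again lies in the admissible family by \cref{ass:dimension_harmonic_forms}), apply \cref{lem:weak_approximation_general_domain} on $D^{+}\subset D^{\ast}$ in the weighted norm $\Vert\cdot\Vert_{a,D^{\ast},\delta}$, and then invoke the discrete Caccioppoli inequality \cref{lem:caccioppoli_inequality} to upgrade the $L^{2}(D^{+})$ error to the weighted energy norm on $D$. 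The outcome is a subspace ${\bm V}_{H,\epsilon}(D)\subset {\bm W}_{h}(D)$ of dimension at most $d_{\mathtt{har}}+C'_{\rm dim}\bigl(|D^{\ast}|/H^{d}+(|D^{\ast}|/\delta^{d})|\log\epsilon|^{d+1}\bigr)$ satisfying
\begin{equation*}
\inf_{{\bm z}_h\in {\bm V}_{H,\epsilon}(D)}\Vert {\bm v}_h-{\bm z}_h\Vert_{a,D,\delta}\leq C'_{\rm app}(H/\delta+\epsilon)\Vert {\bm v}_h\Vert_{a,D^{\ast},\delta}.
\end{equation*}

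Next I would iterate this one-step estimate over the family $\{\omega^{k}_{N}\}_{k=1}^{N+1}$ of nested domains from \cref{ass:dimension_harmonic_forms}, taking $\delta = \delta^{\ast}/N$ at each step and choosing $H$ and $\epsilon$ so that $C'_{\rm app}(NH/\delta^{\ast}) = C'_{\rm app}\epsilon = 1/(2e)$, exactly as in the proof of \cref{thm:exponential_convergence}. Summing the $N$ local spaces produces $\widehat{{\bm V}}_{\rm app}\subset {\bm W}_{h}(\omega)$ of dimension at most
\begin{equation*}
N\!\left(d_{\mathtt{har}}+C''_{\rm dim}N^{d}\,|\omega^{\ast}|/(\delta^{\ast})^{d}\right)\leq (C''_{\rm dim}+d_{\mathtt{har}})\,N^{d+1}\,|\omega^{\ast}|/(\delta^{\ast})^{d},
\end{equation*}
since $N\leq N^{d+1}$ and $|\omega^{\ast}|/(\delta^{\ast})^{d}\geq 1$ may be assumed without loss. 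Chaining the $N$ contractions yields the weighted bound $\inf_{{\bm z}_h\in\widehat{{\bm V}}_{\rm app}}\Vert {\bm v}_h|_{\omega}-{\bm z}_h\Vert_{a,\omega,\delta}\leq e^{-N}\Vert{\bm v}_h\Vert_{a,\omega^{\ast},\delta}$ (trivially true when $h>\delta/8$ by taking $\widehat{{\bm V}}_{\rm app}={\bm W}_h(\omega)$, whose dimension obeys the same polynomial-in-$N$ bound by a standard mesh-counting argument).

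The final step is to pass from the weighted norm to the energy norm and then push through $\Xi_h$. Using $\max\{1,\delta\}\cdot\max\{1,\delta^{-1}\}\leq\max\{\delta^{\ast},1/\delta^{\ast}\}\cdot N$ and absorbing the factor $N$ into the exponential via $Ne^{-N}\leq e^{-N/2}$, I get
\begin{equation*}
\inf_{{\bm z}_h\in\widehat{{\bm V}}_{\rm app}}\Vert {\bm v}_h-{\bm z}_h\Vert_{a,\omega}\leq \max\{\delta^{\ast},1/\delta^{\ast}\}\,e^{-N/2}\,\Vert {\bm v}_h\Vert_{a,\omega^{\ast}}.
\end{equation*}
Setting ${\bm V}_{\rm app}:=\Xi_h(\widehat{{\bm V}}_{\rm app})$ and invoking \cref{lem:norm_estimate_POU} introduces the $(1+\rho^{-1})$ prefactor. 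Writing $n:=\dim\widehat{{\bm V}}_{\rm app}$ and solving the dimension bound $n\leq (C_1+d_{\mathtt{har}})N^{d+1}|\omega^{\ast}|/(\delta^{\ast})^{d}$ for $N$ gives $N\geq 2b\,n^{1/(d+1)}$ with the claimed constant $b$, and the condition $N\geq 1$ reduces to $n\geq n_0$ with the claimed $n_0$. By the definition of the $n$-width, this yields the stated exponential decay.

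The routine part is the bookkeeping with constants and the choice of $H,\epsilon$; the only conceptual adjustment relative to the simply connected case is in the dimension count, where the additive $d_{\mathtt{har}}$ term from each step must be shown to be absorbable into the leading $N^{d+1}$ behavior. This is the main ``obstacle,'' but it is resolved by the simple inequality $N\cdot d_{\mathtt{har}}\leq d_{\mathtt{har}}N^{d+1}$, which is why the topology of $\omega^{\ast}$ enters the final rate only through an additive shift of the constant $C_1$ by $d_{\mathtt{har}}$, without degrading the $n^{1/(d+1)}$ exponent.
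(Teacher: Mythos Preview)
Your proposal is correct and follows precisely the approach the paper indicates: the paper itself does not give a detailed proof but simply states that the result follows by ``using \cref{lem:weak_approximation_general_domain} and similar arguments as in the proof of \cref{thm:exponential_convergence}'', which is exactly the plan you lay out. Your identification of the one conceptual adjustment---that the extra additive $d_{\mathtt{har}}$ in the per-step dimension count accumulates to $N\cdot d_{\mathtt{har}}$ over the iteration and is then absorbed into the $N^{d+1}$ leading term---is the right observation, and explains the form of $n_0$ and $b$ in the statement.
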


\section{Numerical experiments}\label{sec-5}
In this section, we present numerical results to illustrate the performance of the proposed methods. The numerical experiments are performed using the software \texttt{FreeFEM} \cite{hecht2012new}, in particular based on its \texttt{ffddm} \cite{FFD:Tournier:2019} module designed to facilitate the use of two-level Schwarz domain decomposition solvers. The local boundary value problems and the coarse problem in MS-GFEM are solved by direct solvers. The local eigenproblems are solved using the techniques presented in \cite[section 2.3]{ma2025unified} and a built-in eigensolver in \texttt{FreeFEM} which is a wrapper of \texttt{Arpack}. The convergence tolerance of GMRES corresponding to a relative residual reduction is set to $10^{-6}$. 

\begin{figure}\label{fig:model_and_solution}
    \centering
    \includegraphics[scale=0.42]{./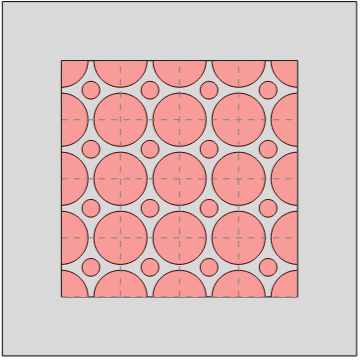}
    \hspace{4ex}
    \includegraphics[scale=0.46]{./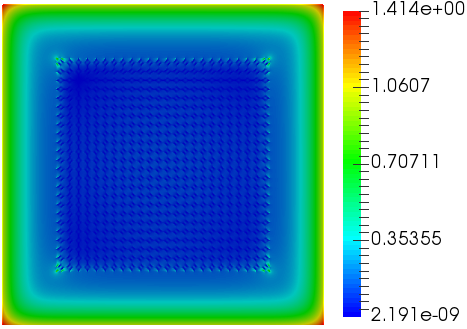}
    \caption{Left: the domain $\Omega$ containing $4\times 4$ SMC unit cells, with $\Omega_{\rm air}$ and $\Omega_{\rm SMC}$ colored in gray and red, respectively; Right: the module $ |{\bf u}_h|$ of the FE solution ${\bf u}_h$ computed with $24\times 24$ SMC unit cells in the domain and with $\sigma_{\rm air}=0.1$. We use $24\times 24$ SMC unit cells for all the computations.}
\end{figure}

\subsection{Two-dimensional example}
We first consider a 2D curl-curl problem to test the robustness of our methods for strongly heterogeneous coefficients with high contrast. The problem arises from the eddy current problem in soft
magnetic composites (SMCs) \cite{ren2019homogenization}, which is described by the following equation:
\begin{equation*}
\left\{
\begin{array}{lll}
{\displaystyle  \nabla \times ({\mu}^{-1}\nabla\times {\bf u}) + {\sigma} {\bf u} = {\bf 0}\,\;\quad {\rm in}\;\, \Omega = [-0.25, 1.25]^{2} }\\[2mm]
{\displaystyle   \;\qquad \qquad \qquad \quad {\bm n}\times {\bf u} = 1\quad \,\,{\rm on}\;\,\partial \Omega}
\end{array}
\right.
\end{equation*}
with 
\vspace{-1.5ex}
\begin{equation*}
 \mu =   \left\{
\begin{array}{lll}
{\displaystyle  50  \qquad {\rm in}\;\, \Omega_{\text{SMC}}}\\[2mm]
{\displaystyle  1\;\;\qquad {\rm in}\;\,\Omega_{\text{air}}},
\end{array}
\right.
\qquad \text{and}\quad \quad 
\sigma =   \left\{
\begin{array}{lll}
{\displaystyle  100  \quad\;\;\quad {\rm in}\;\,\Omega_{\text{SMC}}}\\[2mm]
{\displaystyle  \sigma_{\text{air}}\;\;\quad\quad {\rm in}\;\,\Omega_{\text{air}}},
\end{array}
\right.
\end{equation*}
where $\Omega_{\text{SMC}} \subset [0,1]^{2}$ and $\Omega_{\text{air}} = \Omega \setminus \Omega_{\text{SMC}}$ denote the subdomains occupied by the SMC and the air, respectively. The SMC consists of $n\times n$ copies of a scaled unit cell; see \cref{fig:model_and_solution} (left) for an illustration of the domain with $n=4$. We take $n=24$ for all our experiments. The conductivity $\sigma_{\text{air}}$ is chosen as a small positive constant, resulting in a large contrast $100/\sigma_{\rm air}$ in the coefficient $\sigma$. The computational setting is exactly the same as in \cite{ren2019homogenization}. 

The FE discretization is based on a structured triangular mesh with $h=1/800$. Unless otherwise stated, we use the lowest-order N\'{e}d\'{e}lec elements for the discretization, with about $1.9\times 10^{6}$ unknowns in the resulting linear system. The overlapping decomposition $\{\omega_i\}$ of the domain $\Omega$ is defined by first partitioning $\Omega$ into $m\times m$ uniform non-overlapping subdomains, and then enlarging each subdomain by adding two layers of adjoining fine mesh elements. Each $\omega_i$ is further extended by adding several layers of fine mesh elements to create the corresponding oversampling domain $\omega_i^{\ast}$. The number of these additional layers is denoted by `Ovsp', and thus the actual oversampling size is $\text{Ovsp} \times h$. We denote by $n_{\rm loc}$ the number of local eigenvectors used per subdomain for building the coarse space. We use a partition of unity similar to the one described in \cite{jolivet2014overlapping}, but with a distance function defined on edges instead of on vertices.

\begin{figure}\label{fig:eigenvalue_and_error}
    \centering
    \includegraphics[scale=0.40]{./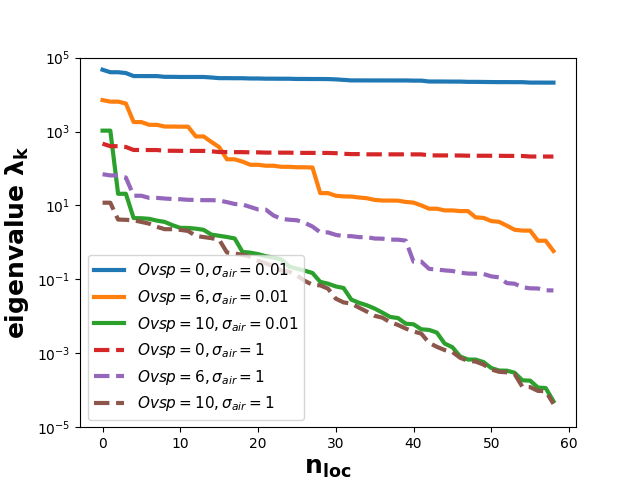}
    \hspace{-2ex}
    \includegraphics[scale=0.40]{./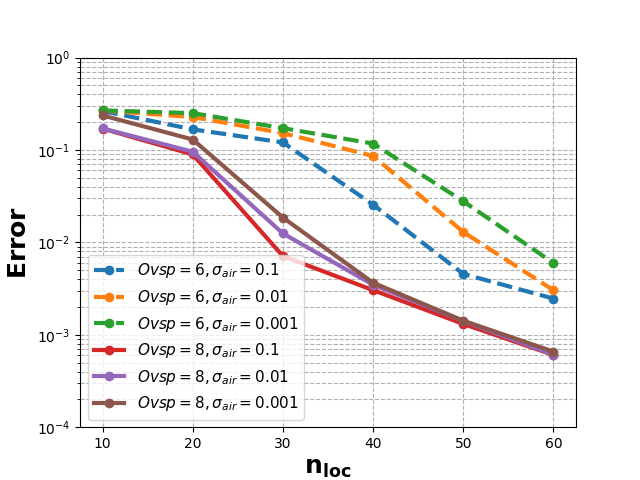}
    \caption{2D example: the computed eigenvalues of MS-GFEM on an interior subdomain with $m=12$ (left) and the (relative) errors of MS-GFEM with $m=8$ (right).}
\end{figure}

\Cref{fig:eigenvalue_and_error} (left) plots the computed eigenvalues of MS-GFEM on an interior subdomain with different oversampling sizes. Without oversampling ($\text{Ovsp}=0$), the eigenvalues are nearly constant, but when a small oversampling is used, they decay exponentially as predicted. Moreover, a larger oversampling size yields a faster eigenvalue decay. We also observe that the eigenvalues corresponding to $\sigma_{\rm air} = 0.01$ are noticeably larger than those corresponding to $\sigma_{\rm air} = 1$ for very small oversampling sizes ($\text{Ovsp}=0$ or 6), but are approximately the same for the size $\text{Ovsp}=10$ (corresponding to an oversampling ratio of 1.28). This shows that the effect of a high coefficient contrast on the eigenvalues can be offset by using a moderate oversampling. \Cref{fig:eigenvalue_and_error} (right) displays the convergence of MS-GFEM with respect to $n_{\rm loc}$ for different choices of Ovsp and $\sigma_{\rm air}$. We see that with $\text{Ovsp}=6$, the errors are noticeably larger for higher coefficient contrasts, but with a slightly larger oversampling size $\text{Ovsp}=8$, they are comparable and are significantly smaller than in the former case. This shows that using a slightly larger oversampling can significantly improve the robustness of the method for coefficient contrasts. 

\begin{figure}\label{fig:residual}
    \centering
    \includegraphics[scale=0.39]{./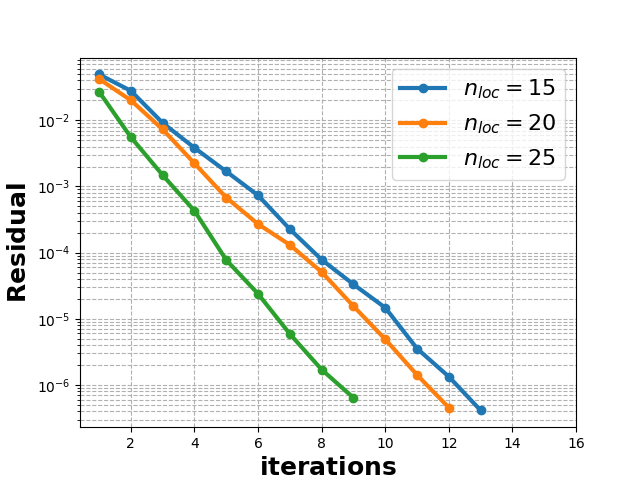}
    \includegraphics[scale=0.39]{./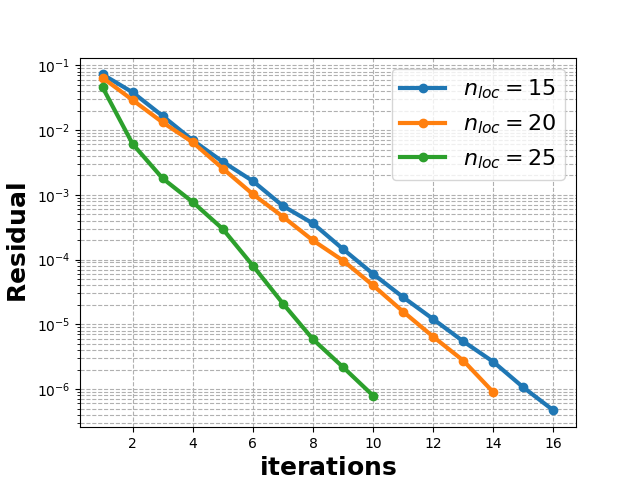}
    \caption{2D example: the convergence history of the preconditioned GMRES for $\sigma_{\rm air} =1$ (left) and  $\sigma_{\rm air} =0.01$ (right) with $m=8$ and $\text{Ovsp}=6$.}
\end{figure}

\begin{table}\label{table:iteration_count}
\centering
\begin{tabular}{lllllll}
\toprule
$\text{Ovsp}\textbackslash n_{\rm loc} $& 10 & 20 & 30 & 40 & 50 & 60 \\
\midrule
2 & 21 (32) & 21 (31) & 21 (29) & 21 (24) & 21 (23) & 19 (22)\\
4 & 18 (29) & 18 (20) & 17 (19) & 14 (16) & 13 (16) & 11 (14) \\
6 & 16 (18) & 15 (17) & 11 (15) & 9 (12) & 8 (9) & 3 (4)\\
8 & 14 (16) & 9 (11) & 5 (7) & 3 (4) & 2 (3) & 2 (2)\\
10 & 6 (13) & 3 (7) & 2 (3) & 2 (3) & 2  (2) & 2 (2)\\
\bottomrule
\end{tabular}
\caption{2D example: the iteration counts of GMRES for $\sigma_{\rm air} = 0.001$ with $m = 8$. The corresponding iteration numbers for the quadratic N\'{e}d\'{e}lec elements are shown in the parentheses.}
\end{table}

\Cref{fig:residual} depicts the convergence history of GMRES preconditioned with MS-GFEM with different local space sizes $n_{\rm loc}$. We observe that a larger $n_{\rm loc}$ leads to a faster residual reduction as expected. Moreover, there is only a small difference in the iteration counts between $\sigma_{\rm air} =1$ and $\sigma_{\rm air} = 0.01$. Notably, the algorithm can converge in 16 iterations with $n_{\rm loc}=15$ and $m=8$ -- the size of the coarse problem is as small as 1/2000 of the fine-scale problem. \Cref{table:iteration_count} exhibits the iteration counts of the preconditioned GMRES with different oversampling and local space sizes for $\sigma_{\rm air} =0.001$. We observe that with a larger oversampling size, the decrease in the iteration counts with increasing $n_{\rm loc}$ is more significant. Moreover, with reasonably large Ovsp and $n_{\rm loc}$, the algorithm can converge in only two iterations. However, using large oversampling and local space sizes results in significant computational cost, particularly in the solution of the local eigenproblems. In fact, as shown in \cite{strehlow2024fast}, this is generally not optimal for the overall computational efficiency of the algorithm. We also see from \cref{table:iteration_count} that the algorithm for the quadratic N\'{e}d\'{e}lec elements generally needs more iterations than that for the lowest-order N\'{e}d\'{e}lec elements, but for large local space sizes, the iteration counts are almost the same.

% for all $n_{\rm loc}$'s and a larger $n_{loc}$ leads to a faster residual reduction. In particular, with , the size of the coarse space is only . Moreover, we observe that $\sigma_{\rm air}$ has only a small effect on the iteration count of the algorithm.  

\begin{figure}\label{fig:3D_model}
    \centering
    \includegraphics[scale=0.18]{./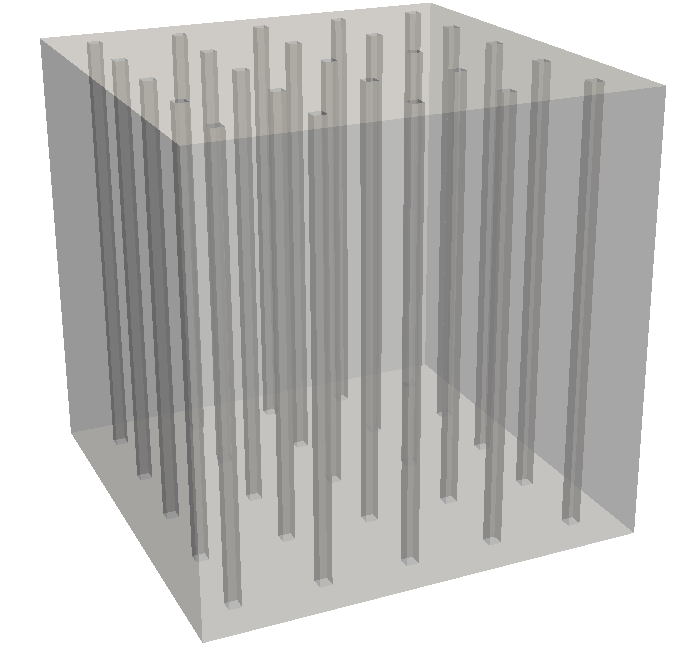}
    \hspace{4ex}
      \includegraphics[scale=0.13]{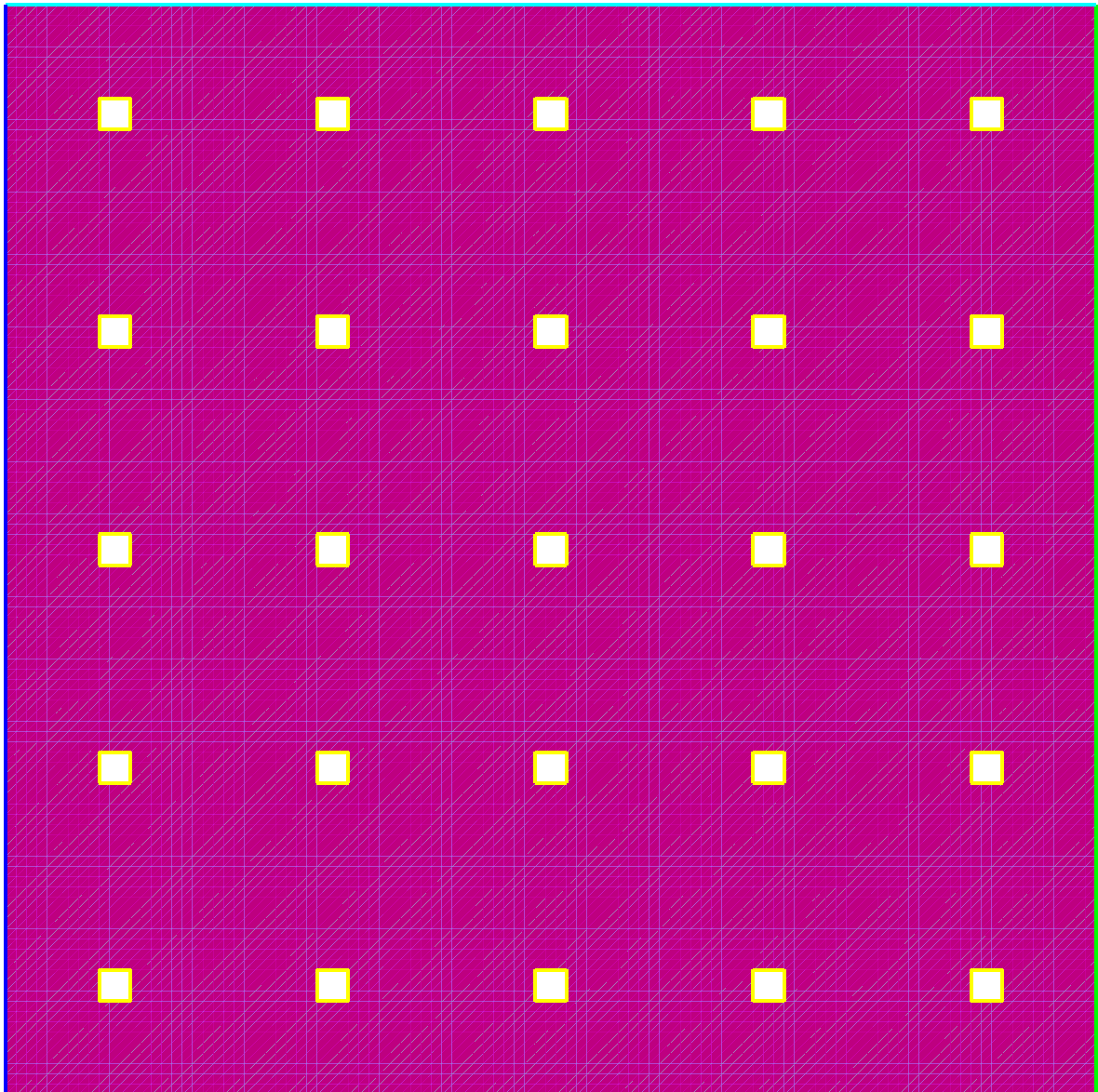}

      \vspace{4ex}
        \includegraphics[scale=0.13]{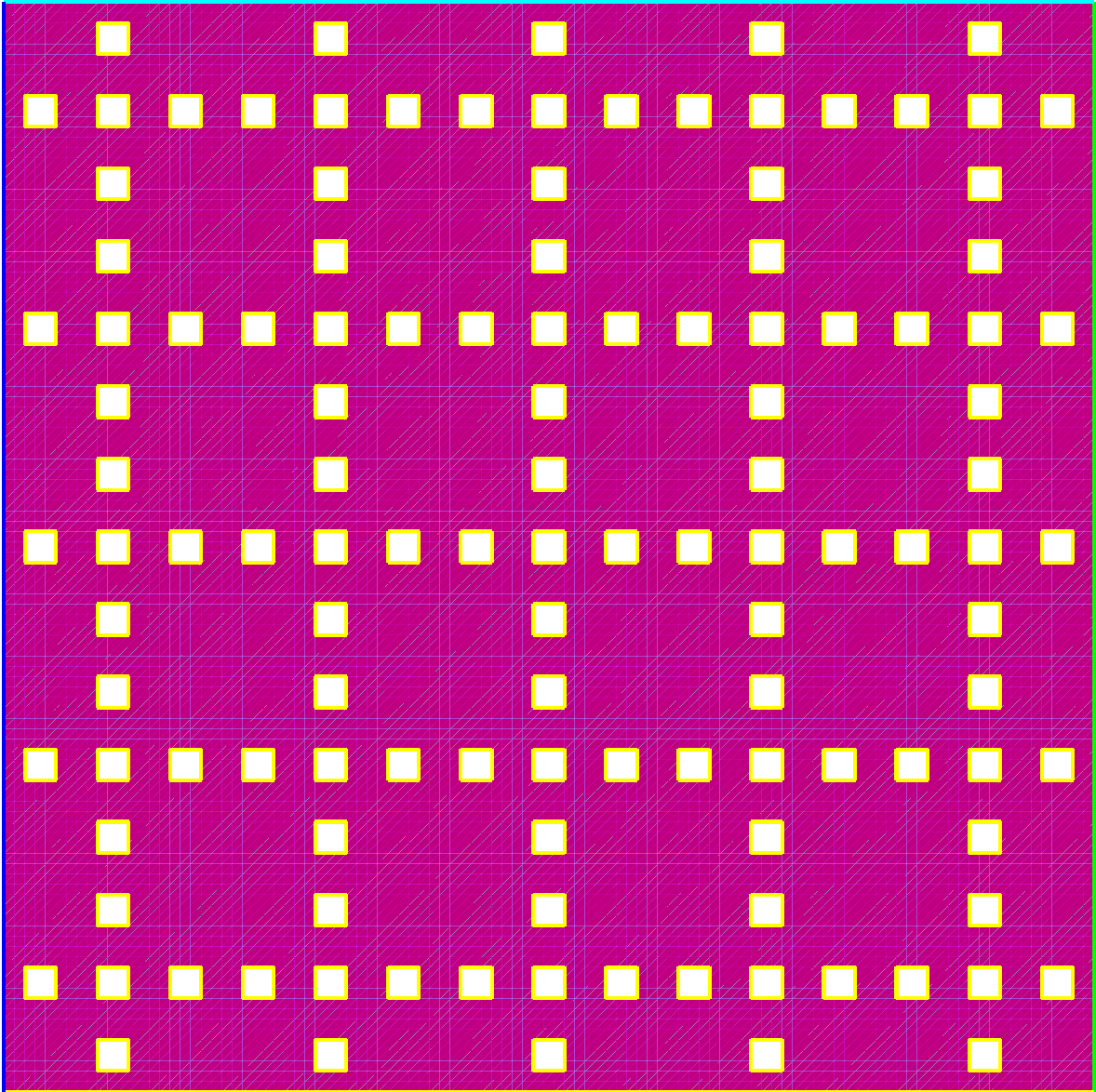}
        \hspace{4ex}
          \includegraphics[scale=0.13]{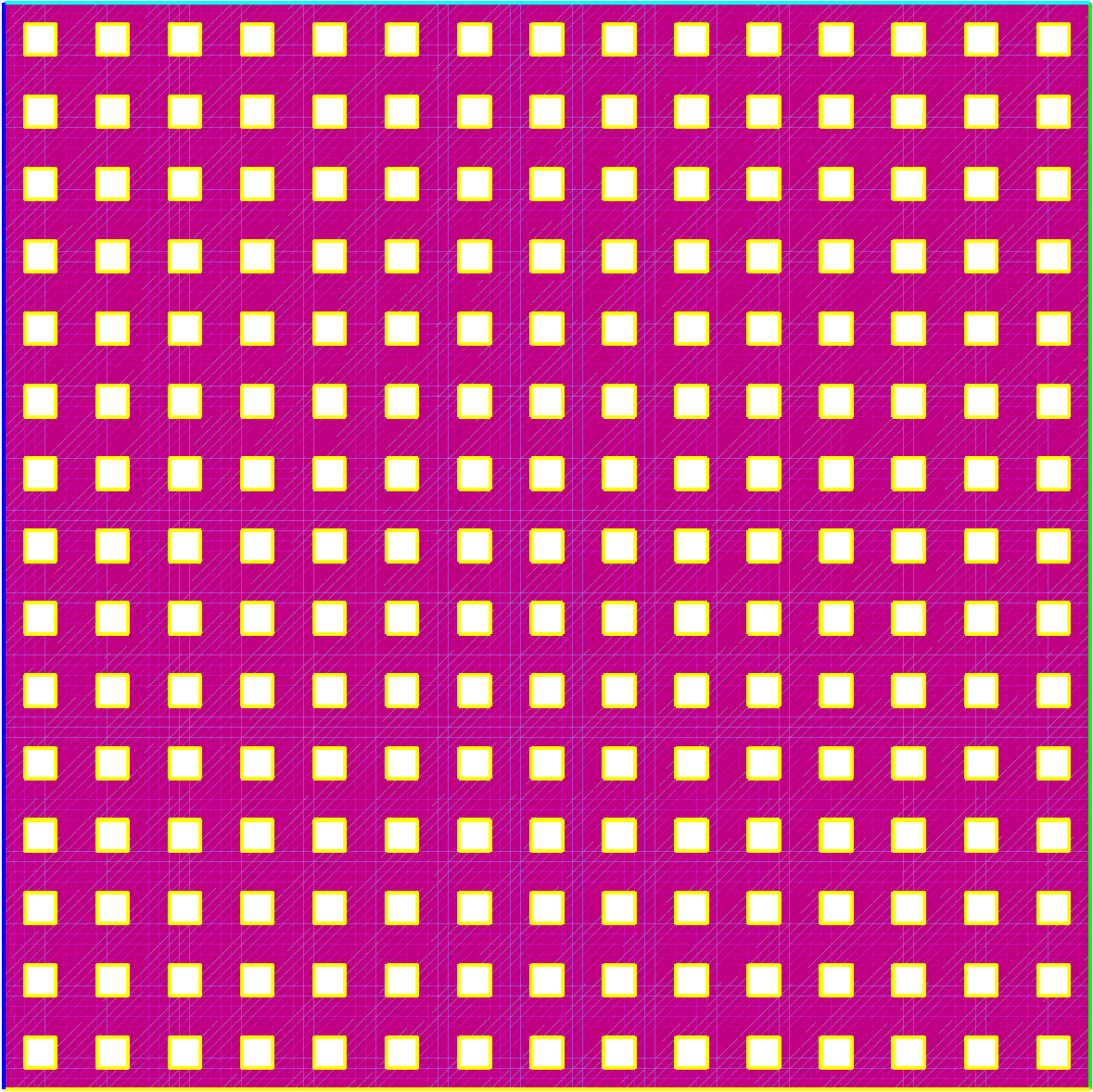}
    \caption{Computational domains with an increasing number of holes that extend along the $z$-direction: 25 (top right), 125 (bottom left), and 225 (bottom right) holes, all shown in the $x$-$y$ plane. The 3D diagram of the domain with 25 holes is shown on the top left. }
\end{figure}

\subsection{Three-dimensional example}\label{3D example}
In this example, we test the robustness of our methods for domains with non-trivial topologies. We choose the constant coefficients $\kappa = \nu =1$ and the source term ${\bm f} = (1, 1, 1)$ in equation \cref{eq:continuous_model_prob} for our test. The computational domain $\Omega$ is a unit cube with holes that extend along the $z$-direction of the domain; see \cref{fig:3D_model} for three domain configurations. In particular, we refer to the models with holes $25$, $125$, and $225$ as model 1 (`m1'), model 2 (`m2'), and model 3 (`m3'), respectively. The FE discretizations for these models are based on the same structured tetrahedral mesh with $h=1/100$ and on the lowest-order N\'{e}d\'{e}lec elements. The resulting linear systems contain 6165240, 5795140, 5425040 unknowns, respectively. 

To implement MS-GFEM, we first partition the computational domain into $250$ uniform non-overlapping subdomains, with $5\times 5$ subdomains in the $xy$-plane and 10 subdomains in the $z$-direction. The numbers of holes in each subdomain in the three models are 1, 5, and 9, respectively. Each subdomain is extended by adding one layer of fine mesh elements to define the overlapping decomposition $\{\omega_i\}$. As before, we denote by `Ovsp' the number of layers of fine mesh elements added to each $\omega_i$ to create the corresponding oversampling domain $\omega_i^{\ast}$, and by $n_{\rm loc}$ the number of local eigenvectors used per subdomain for building the coarse space. We note that due to the presence of holes, the oversampling domains can include boundaries with a "sawtooth" shape. We use the same partition of unity as for the 2D example.  

\Cref{fig:3D_eigenvalue_decay} (left) depicts the computed eigenvalues of MS-GFEM on an interior subdomain for the three models with different oversampling sizes. We observe that for all three models, the eigenvalues decay exponentially with a higher rate for a larger oversampling size, which agrees well with our theoretical predictions. Moreover, with more holes in the subdomain, the corresponding eigenvalues $\lambda_{k}$ are significantly smaller for $k>20$, but with a similar decay rate. Interestingly, the topology of the subdomain significantly affects the first few eigenvalues, as shown in \cref{fig:3D_eigenvalue_decay} (right). In particular, we observe that for the three models, the number of the first eigenvalues that remain nearly constant corresponds to the number of holes in the subdomain $\omega_i$. This demonstrates that the eigenvalues of MS-GFEM indeed encode important topological information about the associated subdomain.

\begin{figure}\label{fig:3D_eigenvalue_decay}
    \centering
    \includegraphics[scale=0.40]{./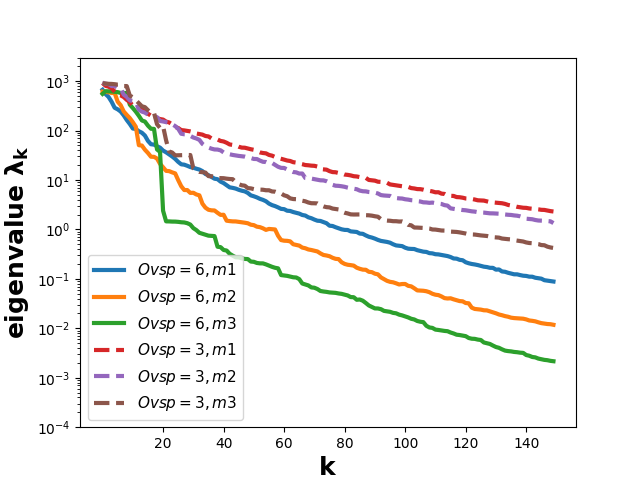}
    \hspace{-3ex}
    \includegraphics[scale=0.4]{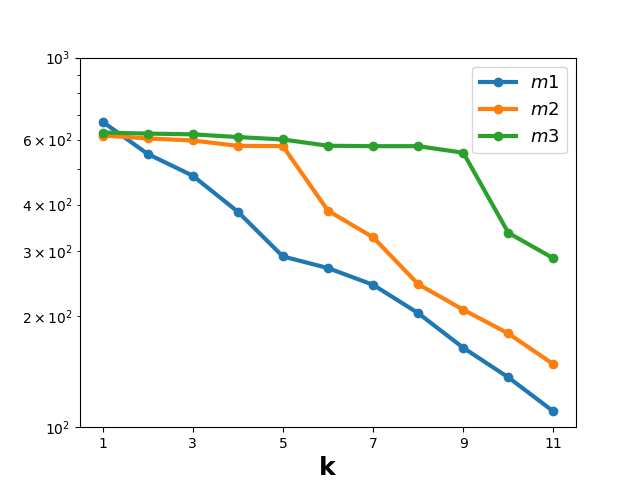}
    \caption{3D example: the computed eigenvalues of MS-GFEM on an interior subdomain for the three models (left) and the first 11 eigenvalues with $\text{Ovsp}=6$ (right). }
\end{figure}

\begin{figure}\label{fig:3D_error}
    \centering
    \includegraphics[scale=0.50]{./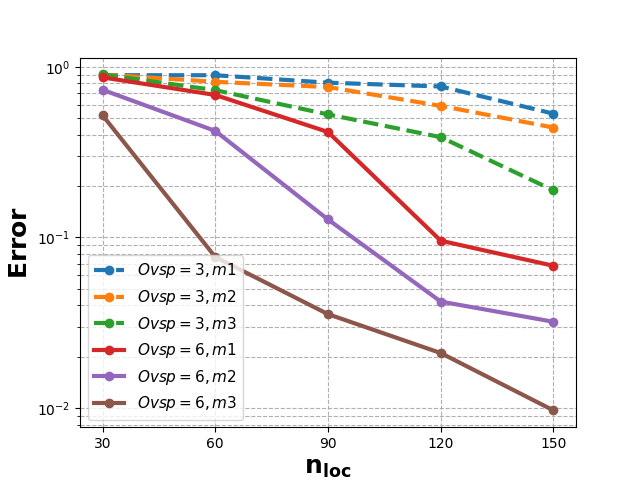}
    \caption{3D example: the (relative) errors of MS-GFEM.}
\end{figure}

\Cref{fig:3D_error} displays the convergence of MS-GFEM with increasing local space sizes $n_{\rm loc}$. We observe that as for the 2D example, using a slightly larger oversampling significantly improves the convergence rate of the method. But as predicted by our theory, the convergence rate in 3D is much lower than that in 2D. In addition, the more holes in the domain, the smaller the errors are, which is in line with the behavior of the eigenvalues shown above. \Cref{tab:3D_iteration_count} gives the iteration counts of the preconditioned GMRES with different local space sizes. We observe that increasing the local space size leads to only a limited decrease in the iteration counts, especially for a very small oversampling size. Although increasing the oversampling and local space sizes can reduce the iteration numbers needed, it also results in a significant growth in computational overhead. In practice, it is often preferable to use small oversampling and local space sizes with a few more iterations. The advantage of the iterative solver is that even with a fairly small coarse space, the method can converge reasonably fast. Indeed, as shown in the table, the method can converge in 34 iterations for all three models with $\text{Ovsp}=2$ and $n_{\rm loc} = 20$. With this setting, the size of the coarse problem is only 2500, achieving (at least) a 2170$\times$ reduction in model size.

\begin{table}[h]\label{tab:3D_iteration_count}
\centering
\begin{tabular}{l|ll|ll|ll}
\toprule
 \multirow{2}{*}{$n_{\rm loc}$} &\multicolumn{2}{c}{\text{model} 1} & \multicolumn{2}{c}{\text{model} 2} &  \multicolumn{2}{c}{\text{model} 3} \\
 &$\text{Ovsp}=2$ & $\text{Ovsp}=4$ & $\text{Ovsp}=2$ & $\text{Ovsp}=4$& $\text{Ovsp}=2$ & $\text{Ovsp}=4$ \\
\midrule
 20 & \quad 34  & \quad 33  & \quad 33 & \quad 21 & \quad 32  & \quad 23 \\
 30 & \quad 33 & \quad 21  & \quad 24 & \quad 20 & \quad 20 & \quad 15 \\
 40 & \quad 30  & \quad 19 & \quad 22 & \quad 18 & \quad 20 & \quad 13 \\
 50 & \quad 23 & \quad 17 & \quad 22 & \quad 18 & \quad 20 & \quad 13 \\
 60 & \quad 22  & \quad 16  & \quad 21 & \quad 16 & \quad 20 & \quad 12  \\
 80 & \quad 21 & \quad 14  & \quad 21 & \quad 13 & \quad 19 & \quad 8 \\
 100 & \quad 21  & \quad 11 & \quad 20 & \quad 13 & \quad 19 & \quad 6  \\
 120 & \quad 20 & \quad 12 & \quad 19 & \quad 10 & \quad 15 & \quad 5 \\
\bottomrule
\end{tabular}
\caption{3D example: the iteration counts of GMRES for the three models.}
\end{table}

\section{Conclusions}
% We have applied the discrete MS-GFEM as a model reduction method to discretized H(curl) elliptic problems, and rigorously proved its exponential convergence without restrictive assumptions. To avoid solving a large coarse problem, we have further formulated the discrete MS-GFEM as a two-level RAS method with guaranteed convergence. Numerical experiments show that the proposed methods perform well for problems with strongly heterogeneous, high-contrast coefficients and topologically complicated domains. In particular, the iterative solver can converge in a small number of iterations with a coarse problem which is only $1/2000$ the size of the fine-scale problem. Future research will focus on solving time-harmonic Maxwell's equations by combining the methods in this paper and the preconditioning technique proposed in \cite{chen2007adaptive}.

We have applied the discrete MS-GFEM as a model reduction technique for discretized \( H(\mathrm{curl}) \) elliptic problems, and have rigorously established its exponential convergence under minimal assumptions. To address the computational challenge of solving large coarse-scale systems, we further reformulated the discrete MS-GFEM as a two-level restricted additive Schwarz method, with provable convergence guarantees. Numerical experiments demonstrate the effectiveness of the proposed methods in handling problems with highly heterogeneous, high-contrast coefficients and topologically complex domains. Notably, the iterative solver achieves convergence within a small number of iterations, even when the coarse problem size is as small as \(1/2000\) of the fine-scale problem. Future work will explore the extension of these techniques to time-harmonic Maxwell's equations, by combining the methodology presented in this paper with the preconditioning strategy introduced in~\cite{chen2007adaptive}.

% is nontrivial when the domain $\widetilde{D}^{+}$ has a nontrivial topology and/or $\partial \widetilde{D}^{+} \cap \partial \Omega$ is not connected. 

% \appendix
% \section{Useful Lemmas}
% \begin{lemma}[Regular decomposition]\label{lem:regular_decomposition}
% Let $\Omega$ be a bounded Lipschitz polyhedral domain. Then for each ${\bm u}\in {\bm H}_{0}({\rm curl};\Omega)$, there exist ${\bm z}\in {\bm H}_{0}^{1}(\Omega)$ and $p\in H_{0}^{1}(\Omega)$ such that ${\bm u} = {\bm z} + \nabla p$, with the estimates
% \begin{align}\label{eq:estimate_regular_decomp}
% \begin{split}
%  \Vert {\bm z}\Vert_{{\bm L}^{2}(\Omega)} +   \Vert  p\Vert_{L^{2}(\Omega)} &\leq C\Vert{\bm u}  \Vert_{{\bm L}^{2}(\Omega)},\\
% \Vert \nabla {\bm z}\Vert_{{\bm L}^{2}(\Omega)} + {\rm diam}(\Omega)^{-1} \Vert {\bm z}\Vert_{{\bm L}^{2}(\Omega)}+ \Vert  \nabla p\Vert_{{\bm L}^{2}(\Omega)} &\leq C\Vert \nabla \times {\bm u}  \Vert_{{\bm L}^{2}(\Omega)}, 
% \end{split}
% \end{align}
% where $C>0$ depending only on the shape of $\Omega$, but not on ${\rm diam}(\Omega)$.   
% \end{lemma}

\bibliographystyle{siamplain}
\bibliography{lit}
\end{document}